\documentclass[11pt,letterpaper]{amsart}%
\usepackage{amsfonts}
\usepackage{amsmath}
\usepackage{amssymb}
\usepackage{color}
\usepackage{graphicx}%
\providecommand{\U}[1]{\protect\rule{.1in}{.1in}}
\newtheorem{theorem}{Theorem}[section]
\theoremstyle{plain}

\newtheorem{lemma}{Lemma}[section]

\newtheorem{remark}{Remark}

\numberwithin{equation}{section}
\begin{document}
\title[Optimal dimension-dependent estimate of the second-order discrete Riesz transforms]{Optimal dimension-dependent $\ell^p$ and $\ell^{1,\infty}$ estimates of the second-order discrete Riesz transforms}

\author{Hanli Tang}
\address[Hanli Tang]{Laboratory of Mathematics and Complex Systems (Ministry of Education), School of Mathematical Sciences, Beijing Normal University, Beijing, 100875, China}
\email{hltang@bnu.edu.cn}

\author{Zewei Xu}
\address[Zewei Xu]{Laboratory of Mathematics and Complex Systems (Ministry of Education), School of Mathematical Sciences, Beijing Normal University, Beijing, 100875, China}
\email{zwxu@mail.bnu.edu.cn}

\keywords{optimal, dimension-dependent, second-order discrete Riesz transform}
\thanks{The first author
was partly supported by  National Natural Science Foundation of China (Grant No.12471100).}


\begin{abstract}
In this paper we investigate the optimal
dimension-dependent estimates of the second-order discrete
Riesz transforms
\[
R_{\mathrm{dis}}^{(jk)}f(n)
=
c_d\sum_{m\in\mathbb Z^d\setminus\{0\}}
\frac{m_jm_k}{|m|^{d+2}}f(n-m),
\qquad
c_d=\frac{\Gamma\left(\frac{d+2}{2}\right)}{\pi^{d/2}}.
\]

For $j\neq k$ and every fixed $1<p<\infty$, we prove that
\[
\|R_{\mathrm{dis}}^{(jk)}\|_{\ell^p\to\ell^p}
=
c_d\left[
\frac{2}{2^{d/2}}
+
\left(\frac83+o(1)\right)\frac{d}{3^{d/2}}
\right]
\]
and
\[
\frac{2c_d}{2^{d/2}}
\leq
\|R_{\mathrm{dis}}^{(jk)}\|_{\ell^1\to\ell^{1,\infty}}
\leq
c_d\left[
\frac{2}{2^{d/2}}
+
\left(\frac83+o(1)\right)\frac{d}{3^{d/2}}
\right].
\]
Since
$
c_d\sim
\sqrt{\pi d}\left(\frac{d}{2\pi e}\right)^{d/2}
$ by Stirling's formula,
thl $\ell^p$ estimates give a negative answer to the conjecture proposed by
Ba\~nuelos and Kim  in \cite{BK2}.

The diagonal case exhibits quite a different phenomenon:
for every $j$ and every $1<p<\infty$, $R_{\mathrm{dis}}^{(jj)}$ is
neither bounded from $\ell^p(\mathbb Z^d)$ to  $\ell^p(\mathbb Z^d)$ nor of weak type $(1,1)$.
Cancellation is restored for the operators
$R_{\mathrm{dis}}^{(jj-kk)}
=R_{\mathrm{dis}}^{(jj)}-R_{\mathrm{dis}}^{(kk)}$. For every fixed
$1<p<\infty$,
\[
\|R_{\mathrm{dis}}^{(jj-kk)}\|_{\ell^p\to\ell^p}
=
4c_d\left[
1+(1+o(1))\frac{d}{2^{d/2}}
\right],
\]
and
\[
4c_d
\leq
\|R_{\mathrm{dis}}^{(jj-kk)}\|_{\ell^1\to\ell^{1,\infty}}
\leq
4c_d\left[
1+(1+o(1))\frac{d}{2^{d/2}}
\right].
\]
\end{abstract}
\maketitle
\section{Introduction}
The Riesz transforms are fundamental examples of Calder\'on--Zygmund
singular integral operators. The systematic theory of such operators,
including their weak-type endpoint estimates, originates in the
foundational work of Calder\'on and Zygmund \cite{CZ}. For
$k=1,\ldots,d$, the classical first-order Riesz transform on
$\mathbb R^d$ is defined by
\[
R^{(k)}f(x)
=
\operatorname{p.v.}\,
\widetilde c_d
\int_{\mathbb R^d}
\frac{y_k}{|y|^{d+1}}f(x-y)\,dy,
\qquad
\widetilde c_d
=
\frac{\Gamma\left(\frac{d+1}{2}\right)}
{\pi^{(d+1)/2}}.
\]
For $1<p<\infty$, set
\[
p^*=\max\left\{p,\frac{p}{p-1}\right\}.
\]
The sharp strong-type norm is dimension free:
\[
\|R^{(k)}\|_{L^p(\mathbb R^d)\to L^p(\mathbb R^d)}
=
\cot\left(\frac{\pi}{2p^*}\right),
\]
as proved by Iwaniec and Martin \cite{IM}; a probabilistic proof based on
sharp martingale inequalities was later given by Ba\~nuelos and Wang
\cite{BW}.

At the endpoint, the Calder\'on--Zygmund theory gives
$R^{(k)}:L^1(\mathbb R^d)\to L^{1,\infty}(\mathbb R^d)$.  The dependence
of the weak-type constant on the dimension was a long-standing problem.
Janakiraman \cite{Ja} obtained the bound $C\log d$ for each component, and
Spector and Stockdale \cite{SS} subsequently gave a reduction principle
and a new proof of this best-known dimensional estimate. Very recently,
Ouyang, Spector and Stockdale \cite{OSS} resolved Stein's dimension-free
problem by proving the stronger vector-valued estimate
\[
\left\|
\left(\sum_{k=1}^d|R^{(k)}f|^2\right)^{1/2}
\right\|_{L^{1,\infty}(\mathbb R^d)}
\leq
2\|f\|_{L^1(\mathbb R^d)}.
\]
In particular, every individual continuous Riesz transform has weak-type
$(1,1)$ norm at most $2$, independently of $d$. In dimension one,
$R^{(1)}$ is the Hilbert transform $H$.  Its weak-type norm is known
exactly: Davis \cite{Da} proved
\[
\|H\|_{L^1(\mathbb R)\to L^{1,\infty}(\mathbb R)}
=
\frac{\displaystyle\sum_{m=0}^{\infty}(2m+1)^{-2}}
{\displaystyle\sum_{m=0}^{\infty}(-1)^m(2m+1)^{-2}}
=
\frac{\pi^2}{8G}
\approx1.347,
\]
where $G$ is Catalan's constant.

We now pass to the continuous second-order theory, which is the principal
continuous model for the present paper. The classical second-order Riesz
transforms on $\mathbb R^d$ are given by
\begin{equation}\label{eq:continuous-second-order}
R^{(jk)}f(x)
=
c_d\,\operatorname{p.v.}
\int_{\mathbb R^d}
\frac{y_jy_k}{|y|^{d+2}}f(x-y)\,dy,
\qquad
c_d
=
\frac{\Gamma\left(\frac{d+2}{2}\right)}{\pi^{d/2}},
\end{equation}
and equivalently by
\[
\widehat{R^{(jk)}f}(\xi)
=
-\frac{\xi_j\xi_k}{|\xi|^2}\widehat f(\xi).
\]
If
\[
P_t(x)
=
(2\pi t)^{-d/2}e^{-|x|^2/(2t)}
\]
is the heat kernel and $T_tf=P_t*f$, then
\[
R^{(jk)}f(x)
=
\int_0^\infty
\frac{\partial^2T_tf(x)}
{\partial x_j\partial x_k}\,dt
=
\frac{\partial^2}{\partial x_j\partial x_k}
(-\Delta)^{-1}f(x).
\]

There are three natural second-order families, and all three have
dimension-free strong-type behavior in the continuous setting. For
$j\neq k$,
\begin{equation}\label{eq:continuous-offdiag-norm}
\|2R^{(jk)}\|_{L^p(\mathbb R^d)\to L^p(\mathbb R^d)}
=
p^*-1,
\end{equation}
while
\begin{equation}\label{eq:continuous-difference-norm}
\|R^{(jj)}-R^{(kk)}\|_{L^p(\mathbb R^d)\to L^p(\mathbb R^d)}
=
p^*-1.
\end{equation}
The sharpness of these estimates was proved in \cite{GMSS}. For the
diagonal components,
\begin{equation}\label{eq:continuous-diagonal-norm}
\|R^{(jj)}\|_{L^p(\mathbb R^d)\to L^p(\mathbb R^d)}
=
\gamma(p),
\end{equation}
where $\gamma(p)$ is the sharp Choi constant associated with
nonsymmetric martingale transforms; see \cite{BO}.  Thus, from the point
of view of $L^p$ norms, the off-diagonal transforms, the diagonal
transforms, and the traceless differences all remain uniformly controlled
with respect to dimension.

We next recall briefly what is known for the first-order direct
discretization. Define
\[
R_{\mathrm{dis}}^{(k)}f(n)
=
\widetilde c_d
\sum_{m\in\mathbb Z^d\setminus\{0\}}
\frac{m_k}{|m|^{d+1}}f(n-m).
\]
When $d=1$, this is precisely the discrete Hilbert transform
\[
H_{\mathrm{dis}}f(n)
=
\frac1\pi
\sum_{m\in\mathbb Z\setminus\{0\}}
\frac{f(n-m)}{m}.
\]
The long-standing problem of its exact $\ell^p$ norm was settled by
Ba\~nuelos and Kwa\'snicki \cite{BK}, who proved
\[
\|H_{\mathrm{dis}}\|_{\ell^p(\mathbb Z)\to\ell^p(\mathbb Z)}
=
\|H\|_{L^p(\mathbb R)\to L^p(\mathbb R)}
=
\cot\left(\frac{\pi}{2p^*}\right).
\]
Their argument uses a probabilistic discrete Hilbert transform and a
probability-kernel factorization, providing a striking instance in which
direct discretization preserves the sharp continuous norm.

Ba\~nuelos, Kim and Kwa\'snicki \cite{BKK} extended this probabilistic
construction to first-order Riesz transforms in higher dimensions.  Their
probabilistic transforms $T_R^{(k)}$ satisfy
\[
\|T_R^{(k)}\|_{\ell^p(\mathbb Z^d)\to\ell^p(\mathbb Z^d)}
=
\cot\left(\frac{\pi}{2p^*}\right),
\]
and, motivated by the discrete Hilbert-transform identity, they
conjectured that the directly sampled transforms
$R_{\mathrm{dis}}^{(k)}$ should have the same dimension-free norm.  Our
recent work \cite{STX} shows that this expectation fails for direct
sampling: for every fixed $1<p<\infty$,
\[
\|R_{\mathrm{dis}}^{(k)}\|_{\ell^p\to\ell^p}
=
2\widetilde c_d
\left(
1+(\sqrt2+o(1))\frac{d}{2^{d/2}}
\right),
\qquad d\to\infty.
\]
Thus already at first order the direct lattice kernel can behave very
differently from its continuous and probabilistic counterparts.  This
observation provides part of the motivation for examining the second-order
problem more closely.

The direct second-order discrete Riesz transform obtained by sampling the
kernel in \eqref{eq:continuous-second-order} is
\begin{equation}\label{eq:direct-second-order}
R_{\mathrm{dis}}^{(jk)}f(n)
=
\sum_{m\in\mathbb Z^d}
K_{\mathrm{dis}}^{(jk)}(m)f(n-m),
\end{equation}
where
\begin{equation}\label{eq:direct-second-kernel}
K_{\mathrm{dis}}^{(jk)}(m)
=
c_d\frac{m_jm_k}{|m|^{d+2}}
\mathbf 1_{\mathbb Z^d\setminus\{0\}}(m).
\end{equation}

Recently, Ba\~nuelos and Kim \cite{BK2} developed a probabilistic theory
of second-order discrete Riesz transforms. Their construction is the
second-order counterpart of the probabilistic models used for the
Hilbert and first-order Riesz transforms, with the periodic Poisson kernel
replaced by the periodic heat kernel
\begin{equation}\label{eq:periodic-heat-kernel}
H_t(x)
=
\sum_{n\in\mathbb Z^d}P_t(x-n),
\qquad
x\in\mathbb R^d,\quad t>0.
\end{equation}
Using a spacetime Doob $h$-process with $h=H_t$ and conditional
expectations of martingale transforms, they constructed the probabilistic
discrete second-order Riesz transforms $\mathcal R^{(jk)}$:
\begin{equation}\label{eq:probabilistic-second-discrete}
\mathcal R^{(jk)}f(n)
=
\sum_{m\in\mathbb Z^d}
\mathcal K^{(jk)}(n-m)f(m),
\end{equation}
where
\begin{equation}\label{eq:probabilistic-second-kernel}
\mathcal K^{(jk)}(m)
=
-
\int_0^\infty\int_{\mathbb R^d}
H_s(x)
\frac{\partial}{\partial x_j}
\left(
\frac{P_s(x)}{H_s(x)}
\right)
\frac{\partial}{\partial x_k}
\left(
\frac{P_s(x-m)}{H_s(x)}
\right)
\,dx\,ds.
\end{equation}

The martingale construction reproduces the sharp continuous constants.
For $j\neq k$,
\begin{equation}\label{eq:probabilistic-offdiag}
\|2\mathcal R^{(jk)}\|_{\ell^p\to\ell^p}
=
p^*-1
=
\|2R^{(jk)}\|_{L^p\to L^p},
\qquad j\neq k,
\end{equation}
and
\begin{equation}\label{eq:probabilistic-difference}
\|\mathcal R^{(jj)}-\mathcal R^{(kk)}\|_{\ell^p\to\ell^p}
=
p^*-1
=
\|R^{(jj)}-R^{(kk)}\|_{L^p\to L^p}.
\end{equation}
For the diagonal transforms they proved
\begin{equation}\label{eq:probabilistic-diagonal}
\|\mathcal R^{(jj)}\|_{\ell^p\to\ell^p}
\leq
\gamma(p)
=
\|R^{(jj)}\|_{L^p\to L^p}.
\end{equation}

There is further evidence that the probabilistic and directly sampled
models should be closely related.  Ba\~nuelos and Kim proved that, for
$m\neq0$,
\begin{equation}\label{eq:U-comparison}
\mathcal K^{(jk)}(m)
=
U(m)K_{\mathrm{dis}}^{(jk)}(m),
\end{equation}
where $U(m)$ is explicit and satisfies $U(m)\to-1$ rapidly as
$|m|\to\infty$. In the off-diagonal case, if
\[
\mathcal J^{(jk)}(m)
=
\bigl(U(m)+1\bigr)K_{\mathrm{dis}}^{(jk)}(m),
\]
then
\[
\bigl(\mathcal R^{(jk)}+R_{\mathrm{dis}}^{(jk)}\bigr)f
=
\mathcal J^{(jk)}*f,
\qquad
\mathcal J^{(jk)}\in\ell^1(\mathbb Z^d).
\]
Thus the probabilistic transform and the direct discretization have the
same leading kernel at large lattice distances and differ, up to sign, by
an $\ell^1$ perturbation.  Combined with
\eqref{eq:probabilistic-offdiag}--\eqref{eq:probabilistic-diagonal}, this
makes the possibility of sharp norm identification quite natural rather
than merely formal: the probabilistic model already has the continuous
sharp constants, while the discrepancy between the two discrete kernels
is summable.

A parallel relation holds on the continuous side. Ba\~nuelos and Kim
extend the expression in \eqref{eq:probabilistic-second-kernel} from
lattice points to $x\in\mathbb R^d$ and define
\begin{equation}\label{eq:probabilistic-continuous-kernel}
\mathbf K^{(jk)}(x)
=
\mathcal K^{(jk)}(x)\mathbf 1_{\{|x|\geq1\}}
-
c_d\frac{x_jx_k}{|x|^{d+2}}
\mathbf 1_{\{|x|<1\}},
\end{equation}
with associated operator
\begin{equation}\label{eq:probabilistic-continuous}
\mathbf R^{(jk)}f
=
\mathbf K^{(jk)}*f.
\end{equation}
The kernels $\mathbf K^{(jk)}$ satisfy the Calder\'on--Zygmund
conditions, the corresponding discrete analogues are precisely
$\mathcal R^{(jk)}$, and $\mathbf R^{(jk)}$ differs from the classical
$R^{(jk)}$ by convolution with an $L^1(\mathbb R^d)$ kernel.

The exact norm identity for the discrete Hilbert transform, the
dimension-free martingale estimates above, and the two integrable
perturbation relations together give a coherent reason to expect that
the four second-order models should retain the same sharp constants.
This led Ba\~nuelos and Kim \cite{BK2} to conjecture that the corresponding
$L^p$ and $\ell^p$ norms agree. In particular, for $j\neq k$ their
conjecture predicts
\[
\|\mathcal R^{(jk)}\|_{\ell^p\to\ell^p}
=
\|R_{\mathrm{dis}}^{(jk)}\|_{\ell^p\to\ell^p}
=
\|R^{(jk)}\|_{L^p\to L^p}.
\]

Against this background, the purpose of the present paper is to determine
the actual dimension dependence of the directly sampled second-order
operators. Our results show that direct sampling is substantially more
sensitive to the cancellation structure of the kernel than the
continuous and probabilistic theories suggest. Three distinct phenomena
occur: the off-diagonal transforms $R_{\mathrm{dis}}^{(jk)}$, $j\neq k$,
remain bounded but have rapidly growing norms; the diagonal transforms
$R_{\mathrm{dis}}^{(jj)}$ lose boundedness altogether; and the traceless
combinations $R_{\mathrm{dis}}^{(jj)}-R_{\mathrm{dis}}^{(kk)}$ recover
cancellation and are bounded throughout the strong $\ell^p$ scale, with
corresponding weak-type $(1,1)$ estimates.

We begin with the off-diagonal case. The following theorem gives the optimal dimension-dependent $\ell^2$ estimate.
\vskip0.3cm
\begin{theorem}\label{thm:offdiag-l2}
Let $d\geq2$ and let $j,k\in\{1,\ldots,d\}$ be distinct. For the second-order discrete Riesz transform $R_{\mathrm{dis}}^{(jk)}$ defined in \eqref{eq:direct-second-order}, we have
$$
c_d\left( \frac{2}{2^{\frac{d}{2}}}+\frac{\frac{8}{3}\left( d-2 \right)}{3^{\frac{d}{2}}}-\frac{\beta}{10^{\frac{d}{2}}}-\frac{\frac{48}{11}\left( d-2 \right)}{11^{\frac{d}{2}}} \right)  \leq \|R_{dis}^{\left( jk \right)}\|_{\ell ^2\left( \mathbb{Z}^d \right) \rightarrow \ell ^2\left( \mathbb{Z}^d \right)}\leq c_d\left( \frac{2}{2^{\frac{d}{2}}}+\frac{\alpha d}{3^{\frac{d}{2}}} \right)
$$
for some absolute constant $
\alpha >0,\beta >0
$.
In particular, when $d\to \infty$
$$
\|R_{dis}^{\left( jk \right)}\|_{\ell ^2\left( \mathbb{Z}^d \right) \rightarrow \ell ^2\left( \mathbb{Z}^d \right)}=c_d\left( \frac{2}{2^{\frac{d}{2}}}+\frac{\left( \frac{8}{3}+o\left( 1 \right) \right) d}{3^{\frac{d}{2}}} \right) .
$$
\end{theorem}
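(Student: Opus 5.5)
The operator is a convolution on $\mathbb Z^d$, so its $\ell^2$ norm is the sup norm of its Fourier multiplier
\[
m^{(jk)}(\xi)=\sum_{m\neq0}K_{\mathrm{dis}}^{(jk)}(m)e^{-2\pi i m\cdot\xi}
=-c_d\sum_{m\neq0}\frac{m_jm_k}{|m|^{d+2}}\sin(2\pi m_j\xi_j)\sin(2\pi m_k\xi_k)\prod_{l\neq j,k}\cos(2\pi m_l\xi_l),
\]
where we used the oddness of the kernel in $m_j$ and in $m_k$. The kernel is bounded by $c_d|m|^{-d}$ and is not absolutely summable, so the series is understood as a limit, e.g.\ over cubes.

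For the upper bound I would use the subordination $|m|^{-d-2}=\Gamma(\tfrac{d+2}{2})^{-1}\int_0^\infty t^{d/2}e^{-t|m|^2}\,dt$. This gives a heat/theta representation in which the sum factorizes into one-dimensional pieces. In each coordinate one has the factors $\theta_1(t,\xi)=\sum_n n\sin(2\pi n\xi)e^{-tn^2}$ and $\theta_0(t,\xi)=\sum_n\cos(2\pi n\xi)e^{-tn^2}$, hence
\[
|m^{(jk)}(\xi)|\le \pi^{-d/2}\int_0^\infty t^{d/2}\,|\theta_1(t,\xi_j)|\,|\theta_1(t,\xi_k)|\prod_{l\neq j,k}|\theta_0(t,\xi_l)|\,dt .
\]
For $t\gtrsim1$ I bound $|\theta_0|\le 1+2e^{-t}+\dots$ and $|\theta_1|\le 2e^{-t}+4e^{-4t}+\dots$. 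The leading term $4e^{-2t}$ integrates to $\pi^{-d/2}\Gamma(\tfrac d2+1)\cdot 4\cdot 2^{-d/2-1}=2c_d2^{-d/2}$. The next terms (one extra factor $2e^{-t}$ from one of the $d-2$ cosine coordinates, or a $4e^{-4t}$ correction) give $O(d\,c_d3^{-d/2})$. For small $t$, Poisson summation shows that $\theta_0,\theta_1$ behave like Gaussians of total mass $O(t^{-1/2})$, $O(t^{-3/2})$. There the crude absolute-value bound fails, since it reproduces the divergent $\sum|m|^{-d}$. So I would not take absolute values in all coordinates at once. Instead I split $\int_0^1+\int_1^\infty$, and for $\int_0^1$ I use the Poisson-dual form, which equals the continuous multiplier $-\xi_j\xi_k/|\xi|^2$ (bounded by $1/2$) plus exponentially small periodization errors. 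Showing that this piece is dominated by $d\,c_d3^{-d/2}$ is the main obstacle. Since $c_d3^{-d/2}$ grows superexponentially, an $O(1)$ bound suffices, provided the constants in the Poisson/periodization errors are controlled uniformly in $d$, i.e.\ the product of $d-2$ dual theta factors must be handled without losing $C^d$. I would try to prove this first using monotonicity of $\theta_0$ and the bound $\prod(1+\epsilon_l)\le e^{\sum\epsilon_l}$, with each $\epsilon_l\lesssim e^{-\pi^2/t}$.

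For the lower bound I choose a test point in the explicit series, with $\xi_j=\xi_k=1/4$ and $\xi_l=0$ for all other $l$. Then $\sin(2\pi m_j/4)\sin(2\pi m_k/4)$ is nonzero only for odd $m_j,m_k$, where it equals $\pm1$ with sign $(-1)^{(m_j-1)/2+(m_k-1)/2}$, and all cosines equal $1$. The multiplier becomes $-c_d\sum \frac{|m_j||m_k|\,\epsilon(m)}{|m|^{d+2}}$ with an explicit sign. Grouping by $|m|^2$:
\begin{itemize}
\item $|m|^2=2$ ($m_j,m_k=\pm1$, others $0$) gives $4\cdot 2^{-d/2-1}=2\cdot2^{-d/2}$;
\item $|m|^2=3$ (one further $\pm1$) gives $2(d-2)\cdot4\cdot3^{-d/2-1}=\tfrac83(d-2)3^{-d/2}$;
\item the first sign-cancelling contributions occur at $|m|^2=10$ ($m_j$ or $m_k=\pm3$) and $|m|^2=11$.
\end{itemize}
I lower-bound the tail by discarding positive terms and bounding negative ones by $\beta 10^{-d/2}$ and $\tfrac{48}{11}(d-2)11^{-d/2}$. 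All remaining shells contribute $O(d^2 c_d4^{-d/2})$-type amounts, which must be absorbed. Dropping them needs care, and I would check via theta functions evaluated at $\xi=1/4$ that the remainder is nonnegative or absorbed into the $\beta$ term. Since the multiplier norm dominates $|m^{(jk)}(\xi)|$ at every point, this gives the lower bound, and the asymptotic follows by matching the $\tfrac83$ coefficient in the upper bound through a refined version of the large-$t$ analysis.
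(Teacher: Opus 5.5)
Your upper bound follows the paper's route: subordination, factorization into $S_1,S_2$, a split at $t=1$, and a large-time expansion whose first two terms give $2c_d2^{-d/2}+\frac83(d-2)c_d3^{-d/2}$. You still need to control $(1+3e^{-t})^{d-2}$ near $t=1$, where it is exponentially large; the paper does this by splitting at $t=2\ln d$.

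The small-time piece you call the main obstacle is harmless, and a $C^d$ loss there costs nothing. The factor $c_d/\Gamma(\frac{d+2}{2})=\pi^{-d/2}$ cancels the $\pi^{d/2}$ produced by the dual theta factors, so that piece is $O(C^d)=o(c_d\,d\,3^{-d/2})$. Taking absolute values of the \emph{dual} factors does not reproduce $\sum|m|^{-d}$. The paper uses $|S_2(t,x)|\le At^{-1/2}e^{-\pi^2\rho(x)^2/t}$ and $|S_1(t,x)|\le Bt^{-3/2}\rho(x)e^{-\pi^2\rho(x)^2/(2t)}$. The factor $\rho_j\rho_k$ then gives $\rho_j\rho_k\int_0^1t^{-2}e^{-\pi^2(\rho_j^2+\rho_k^2)/(2t)}\,dt\le\pi^{-2}$. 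Your periodization idea also closes at once: integrating the exact dual product over $(0,1)$ gives $-\sum_{k\in\mathbb Z^d}\frac{(\xi_j-k_j)(\xi_k-k_k)}{|\xi-k|^2}e^{-\pi^2|\xi-k|^2}$, whose modulus is at most $\frac12\bigl(\sum_{n\in\mathbb Z}e^{-\pi^2n^2}\bigr)^d$.

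The genuine gap is in the lower bound. At $\xi_0=(\frac14,\frac14,0,\dots,0)$ the lattice series $\sum|m_jm_k|\epsilon(m)|m|^{-d-2}$ converges only conditionally. The terms with $\epsilon(m)=-1$ (exactly one of $|m_j|,|m_k|$ congruent to $3$ modulo $4$) have a logarithmically divergent total. So you cannot discard the positive terms and bound the negative ones by $\beta10^{-d/2}+\frac{48}{11}(d-2)11^{-d/2}$. For the same reason, the remaining shells are not $O(d^2c_d4^{-d/2})$ in absolute value. Regrouping by spheres after defining the series through cube limits would also need justification.

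Your shell bookkeeping is correct: the shells $|m|^2=10,11$ are exactly the terms $-24e^{-10t}$ and $-48(d-2)e^{-11t}$ in the paper's $\varphi(t)=4(e^{-t}-3e^{-9t})^2(1+2(d-2)e^{-t})$. What is missing is the idea that makes it rigorous. The paper stays in the heat representation, where
$|m_{12}(\xi_0)|=\pi^{-d/2}\int_0^\infty t^{d/2}S_1(t,\frac14)^2S_2(t,0)^{d-2}\,dt$
has a \emph{nonnegative} integrand. Positivity lets one drop $t<\frac{\ln3}{8}$ and bound pointwise on $t\ge\frac{\ln3}{8}$:
\begin{itemize}
\item $S_2(t,0)\ge1+2e^{-t}$;
\item since $a_k(t)=(2k+1)e^{-t(2k+1)^2}$ is decreasing for $k\ge2$ once $t\ge\frac1{50}$, the alternating series gives $S_1(t,\frac14)\ge2(e^{-t}-3e^{-9t})\ge0$.
\end{itemize}
Then $(1+2e^{-t})^{d-2}\ge1+2(d-2)e^{-t}$, and integrating $t^{d/2}\varphi$ gives the stated bound, with $\int_0^{\ln3/8}t^{d/2}\varphi$ absorbed into $\beta$. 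Without this positivity, or an equivalent summation-by-parts argument in the lattice variables, your lower bound is not established.
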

\begin{remark}
The proof yields an explicit, non-asymptotic upper bound for
$\|R_{\mathrm{dis}}^{(jk)}\|_{\ell^2(\mathbb Z^d)\to\ell^2(\mathbb Z^d)}$;
see \eqref{explicite upper bound}.  It is also useful to make the growth
implicit in Theorem~\ref{thm:offdiag-l2} explicit.  By Stirling's formula,
\begin{equation}\label{eq:stirling-cd}
c_d
=
\frac{\Gamma\left(\frac{d+2}{2}\right)}{\pi^{d/2}}
\sim
\sqrt{\pi d}
\left(\frac{d}{2\pi e}\right)^{d/2},
\qquad d\to\infty.
\end{equation}
Consequently,
\[
\|R_{\mathrm{dis}}^{(jk)}\|_{\ell^2\to\ell^2}
\sim
2\sqrt{\pi d}
\left(\frac{d}{4\pi e}\right)^{d/2}.
\]
Thus the directly sampled off-diagonal norm grows super-exponentially
with the dimension. This behavior is strikingly different from that of
the corresponding continuous and probabilistic second-order Riesz
transforms, whose sharp norms are dimension-free:
\[
\|2R^{(jk)}\|_{L^p\to L^p}
=
\|2\mathcal R^{(jk)}\|_{\ell^p\to\ell^p}
=
p^*-1.
\]
\end{remark}
\vskip0.1cm

We next turn to weak type $(1,1)$ and to the general $\ell^p$ theory. The following result gives the corresponding dimension-dependent bounds.
\vskip0.3cm
\begin{theorem}\label{thm:offdiag-lp}
Let $d\geq2$ and let $j,k\in\{1,\ldots,d\}$ be distinct. There exist absolute constants $\gamma,\lambda>0$ such that
$$\frac{2}{2^{\frac{d}{2}}}c_d\leq \|R_{dis}^{(jk)}\|_{\ell^1(\mathbb{Z}^d)\rightarrow \ell^{1,\infty}(\mathbb{Z}^d)}\leq  \left(\frac{2}{2^{\frac{d}{2}}}+\frac{\lambda d}{3^{\frac{d}{2}}}\right) c_d $$
and
$$c_d\left( \frac{2}{2^{\frac{d}{2}}}+\frac{\frac{8}{3}\left( d-2 \right)}{3^{\frac{d}{2}}}-\frac{\beta}{10^{\frac{d}{2}}}-\frac{\frac{48}{11}\left( d-2 \right)}{11^{\frac{d}{2}}} \right)\leq\|R_{dis}^{(jk)}\|_{\ell^p(\mathbb{Z}^d)\rightarrow \ell^p(\mathbb{Z}^d)}\leq \frac{p^*-1}{2}+c_d\left( \frac{2}{2^{\frac{d}{2}}}+\frac{\gamma d}{3^{\frac{d}{2}}} \right) ,$$
where $p^*=\max\left\{p,\frac{p}{p-1}\right\}$and $\beta$ is the absolute constant in Theorem \ref{thm:offdiag-l2}.

In particular, when $d\to \infty$
$$\frac{2}{2^{\frac{d}{2}}}c_d\leq \|R_{dis}^{(jk)}\|_{\ell^1(\mathbb{Z}^d)\rightarrow \ell^{1,\infty}(\mathbb{Z}^d)}\leq c_d\left(\frac{2}{2^{\frac{d}{2}}} +\frac{\left( \frac{8}{3}+o\left( 1 \right) \right) d}{3^{\frac{d}{2}}} \right)$$
and for every fixed $
1<p<\infty 
$,
$$\|R_{dis}^{\left( jk \right)}\|_{\ell ^p\left( \mathbb{Z}^d \right) \rightarrow \ell ^p\left( \mathbb{Z}^d \right)}=c_d\left(\frac{2}{2^{\frac{d}{2}}} +\frac{\left( \frac{8}{3}+o\left( 1 \right) \right) d}{3^{\frac{d}{2}}} \right) .$$

\end{theorem}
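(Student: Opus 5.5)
The plan is to split the kernel into a piece that looks like a Calder\'on--Zygmund kernel with a dimension-free constant and a remainder that is controlled in $\ell^1$. Write
\[
K_{\mathrm{dis}}^{(jk)}=K_{\mathrm{prob}}+E,\qquad K_{\mathrm{prob}}(m)=-\mathcal K^{(jk)}(m)=-U(m)K_{\mathrm{dis}}^{(jk)}(m),
\]
so that $R_{\mathrm{dis}}^{(jk)}=-\mathcal R^{(jk)}+\mathcal J^{(jk)}*\cdot$ by the Ba\~nuelos--Kim comparison \eqref{eq:U-comparison}. By \eqref{eq:probabilistic-offdiag}, $\|\mathcal R^{(jk)}\|_{\ell^p\to\ell^p}=\frac{p^*-1}{2}$. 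Young's inequality then gives
\[
\|R_{\mathrm{dis}}^{(jk)}\|_{\ell^p\to\ell^p}\le\frac{p^*-1}{2}+\|\mathcal J^{(jk)}\|_{\ell^1}.
\]
So the upper bound reduces to showing
\[
\|\mathcal J^{(jk)}\|_{\ell^1}\le c_d\Bigl(\frac{2}{2^{d/2}}+\frac{\gamma d}{3^{d/2}}\Bigr).
\]
Heuristically this holds because the $\ell^1$ mass of $K_{\mathrm{dis}}^{(jk)}$ on the shells $|m|^2=2$ and $|m|^2=3$ is
\[
c_d\Bigl(\frac{2}{2^{d/2}}+\frac{8(d-2)/3\cdot\ldots}{3^{d/2}}\Bigr).
\]
On the shell $|m|^2=2$ the only contributing points are $m=\pm e_j\pm e_k$, four points each with weight $\frac{1}{2\cdot 2^{d/2}}$. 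We need the factor $(U(m)+1)$ to be close to $1$ on these small shells and exponentially small in $|m|^2$ on larger shells.

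The key step is therefore to estimate $U(m)+1$ from the explicit heat-kernel formula. I expect $|U(m)+1|\lesssim e^{-c|m|^2}$ times a polynomial factor, uniformly in $d$, with $U(m)+1\approx1$ up to $O(d\,3^{-d/2}\cdot2^{d/2})$ relative corrections on the first shells. After that, I sum $\sum_{r\ge2}|\{m:|m|^2=r,\ m_jm_k\ne0\}|\,r^{-(d+2)/2}\,|U+1|$ shell by shell and check that shells $r\ge4$ contribute $O(c_d\,d\,3^{-d/2})$. The number of lattice points on a shell grows like $d^{r/2}$ while the weight decays like $r^{-d/2}$, so for $r\ge4$ the terms are bounded by $c_d\,d^{O(1)}4^{-d/2}$, which is smaller. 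The dimension uniformity of the $U$-estimates is the main obstacle.

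If the estimates for $U$ turn out to be too delicate, the fallback is an alternative decomposition that works directly in the continuum. Transfer via $\mathbf R^{(jk)}$ and compare the sampled kernel with the average over unit cubes $K^{(jk)}*\mathbf 1_{Q}$, using a Marcinkiewicz--Zygmund-type transference in which the discrete operator with cube-averaged kernel has norm $\le\frac{p^*-1}{2}$. The error between the sampled and averaged kernels is then $\ell^1$-controlled in the same way.

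For the weak $(1,1)$ upper bound, the same splitting applies. Since $\ell^{1,\infty}$ is only a quasi-norm, I would handle the two parts with $\|f+g\|_{1,\infty}$-type estimates at the levels $\lambda$ split as $(1-\epsilon)\lambda$ and $\epsilon\lambda$. Here the $\ell^1$ part contributes $\|\mathcal J\|_{\ell^1}/(1-\epsilon)$. The Calder\'on--Zygmund part, being a discretization of $\mathbf R^{(jk)}$, has a dimension-dependent weak constant; I would absorb this at most polynomially, i.e.\ $\lesssim d$, into $\lambda d\,3^{-d/2}c_d$ after choosing $\epsilon$ and using that $c_d3^{-d/2}$ dominates any polynomial in $d$.

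For the lower bound in weak type, test on $f=\delta_0$:
\[
\|R_{\mathrm{dis}}^{(jk)}\delta_0\|_{\ell^{1,\infty}}\ge\sup_\lambda\lambda\,\#\{m:|K(m)|>\lambda\}.
\]
Taking $\lambda$ just below $\frac{c_d}{2\cdot2^{d/2}}$ captures the four points of the shell $|m|^2=2$ and gives exactly $\frac{2c_d}{2^{d/2}}$.

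For the $\ell^p$ lower bound, the $\ell^2$ lower bound of Theorem~\ref{thm:offdiag-l2} comes from the multiplier $\|\widehat K\|_\infty$, evaluated at the point $\xi$ where the characters align with the signs of $m_jm_k$ on the small shells, namely $\xi_j=\xi_k=\pi$ and $\xi_i=0$ otherwise. For a convolution operator, the $\ell^p$ norm dominates the sup of the multiplier for every $p$. The standard argument is to test on the modulated bump $f=e^{i\xi\cdot n}\mathbf 1_{[-N,N]^d}$ and let $N\to\infty$, which gives
\[
\|R_{\mathrm{dis}}^{(jk)}\|_{\ell^p\to\ell^p}\ge|\widehat K(\xi)|
\]
and transfers the $\ell^2$ lower bound verbatim. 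The asymptotic statements then follow because $p^*-1$ is $o(c_d\,d\,3^{-d/2})$ and $\gamma,\lambda$ can be taken as $\frac83+o(1)$ by the sharper shell count.
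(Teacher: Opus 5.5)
Your lower bounds are essentially right, but both upper bounds have a genuine gap. Your whole upper-bound argument hinges on the estimate $\|\mathcal J^{(jk)}\|_{\ell^1}\le c_d\bigl(2\cdot 2^{-d/2}+\gamma d\,3^{-d/2}\bigr)$. That estimate is the entire quantitative content of the theorem, and you leave it as an expectation about $U$.

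Nothing quoted in the paper gives dimension-uniform control of $U$. The only free information is $|\mathcal K^{(jk)}(m)|\le\|\mathcal R^{(jk)}\|_{\ell^2\to\ell^2}=\tfrac12$. This does show $U\approx 0$, hence $U+1\approx1$, on the first shells. It is useless elsewhere: summed over $1\le|m|\le d^2$ it gives roughly $(Cd^{3/2})^d$, far larger than $c_d\approx(d/2\pi e)^{d/2}$. You would need pointwise bounds $|\mathcal K^{(jk)}(m)|\lesssim c_d|m|^{-d}$ with absolute constants on an intermediate range, plus quantitative decay of $U+1$ beyond it, all uniformly in $d$. Without that decay your shell-by-shell count cannot even be summed, since $\sum_{z\ne0}|z_jz_k||z|^{-d-2}$ diverges logarithmically.

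The weak-type argument inherits this gap and adds a second one. You need a dimension-controlled weak $(1,1)$ bound for the discrete operator $\mathcal R^{(jk)}$. That is not among the results you can quote, and transferring it from $\mathbf R^{(jk)}$ would again require an $\ell^1$ estimate for a discrepancy kernel.

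Your fallback is essentially the paper's route, and that is where the missing work is actually done; ``$\ell^1$-controlled in the same way'' hides it. There are two concrete steps.

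\textbf{The comparison.} Sampling $R^{(jk)}(Ef)$ at lattice points is not an $L^p\to\ell^p$ contraction. One must extend $f$ piecewise constantly on unit cubes, apply the truncated transform $R_1^{(12)}$, and then average over cubes. This produces the doubly averaged kernel $\int_Q\int_Q K_{12}^*(z+s-t)\,dt\,ds$, not $K^{(jk)}*\mathbf 1_Q$. This is Proposition~6.1 of Ba\~nuelos--Kim--Kwa\'snicki, giving $\|R_{\mathrm{dis}}^{(12)}\|_{\ell^p\to\ell^p}\le\frac{p^*-1}{2}+\|\widetilde K_{12}^*\|_{\ell^1}$. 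The truncated norm equals $\frac{p^*-1}{2}$ by the Kucharski--Kwa\'snicki--Wr\'obel factorization.

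\textbf{The error-kernel bound.} The sharp constant comes from Lemmas~\ref{lem:error-kernel} and~\ref{lem:lattice-sum}.
For $|z|>d^2$, the mean value theorem with $|\nabla K_{12}|\lesssim c_d d|x|^{-d-1}$ and $|s-t|\le\sqrt d$ makes the tail negligible.
For $|z|\le d^2$, use the crude bound $|\widetilde K_{12}^*(z)|\le\int\Phi(u)|K_{12}^*(z+u)|\,du+|K_{12}^*(z)|$. The partition of unity $\sum_z\Phi(x-z)=1$ turns the first part into the continuous integral $c_d\int_{1\le|x|\le d^2+\sqrt d}|x_1x_2||x|^{-d-2}\,dx$. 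This is $O(\log d)$ because $c_d|\mathbb S^{d-1}|=d$, so it is negligible against $c_d d3^{-d/2}$.
What remains is exactly $c_d\sum_{1\le|z|\le d^2}|z_1z_2||z|^{-d-2}$. Lemma~\ref{lem:lattice-sum} evaluates this as $c_d\bigl(2\cdot2^{-d/2}+(\frac83+o(1))d3^{-d/2}\bigr)$, using the Gamma representation and theta-function factorization.

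For weak type, the paper transfers to $\mathbb R^d$ via Lemma~\ref{lem:weak-transference}. It then uses your asymmetric splitting with $\varepsilon_d=3^{-d/2}$ and a dimensional weak bound for the continuous $R_1^{(12)}$. Any $C^d$ bound suffices there, since $c_d$ grows super-exponentially, so your claim of polynomial dependence is unnecessary.

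One correction to the $\ell^p$ lower bound. Citing Theorem~\ref{thm:offdiag-l2} together with $\|T\|_{\ell^2\to\ell^2}\le\|T\|_{\ell^p\to\ell^p}$ is fine. However, the point $\xi_j=\xi_k=\pi$ you name is wrong: there the multiplier vanishes identically. The substitution $m_j\mapsto-m_j$ preserves $(-1)^{m_j+m_k}$ but reverses the sign of $m_jm_k$. The characters align with $-\mathrm{sgn}(m_jm_k)$ on the first shells at $\xi_j=\xi_k=\pi/2$. In the paper's $e^{-2\pi i m\cdot\xi}$ normalization this is $(\frac14,\frac14,0,\ldots,0)$.
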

\vskip0.3cm
\begin{remark}
The sharp fixed-$p$ asymptotic in Theorem~\ref{thm:offdiag-lp} cannot be recovered by applying the Marcinkiewicz interpolation theorem only to the $\ell^2$ and weak-type $(1,1)$ estimates. Such an interpolation argument does not retain the precise second-order contribution at the scale $c_d d/3^{d/2}$. 
\end{remark}
\vskip0.1cm

The continuous diagonal theory provides a particularly striking point of
comparison. As recalled in \eqref{eq:continuous-diagonal-norm},
$R^{(jj)}$ is bounded on every $L^p(\mathbb R^d)$, $1<p<\infty$, with
the dimension-free sharp norm $\gamma(p)$; as a Calder\'on--Zygmund
operator, it is also of weak type $(1,1)$.

Unexpectedly, the directly sampled diagonal transforms behave in a
fundamentally different way. The positivity of the sampled kernel
destroys the cancellation present in the continuous principal-value
operator. Consequently, one not only loses any hope of dimension-free
control, but boundedness itself breaks down: $R_{\mathrm{dis}}^{(jj)}$
is unbounded on $\ell^p(\mathbb Z^d)$ for every $1<p<\infty$, and it
also fails to be of weak type $(1,1)$. More precisely, we have the
following theorem.
\vskip0.1cm
\begin{theorem}\label{thm:diagonal}
Let $d\geq2$ and $j\in\{1,\ldots,d\}$. Then
$R_{\mathrm{dis}}^{(jj)}$ is not bounded on $\ell^p(\mathbb Z^d)$ for
any $1<p<\infty$. Moreover, it is not bounded from
$\ell^1(\mathbb Z^d)$ to $\ell^{1,\infty}(\mathbb Z^d)$.
\end{theorem}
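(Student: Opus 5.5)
The kernel $K_{\mathrm{dis}}^{(jj)}(m)=c_d m_j^2|m|^{-d-2}\mathbf 1_{m\neq0}$ is nonnegative and of size $\asymp|m|^{-d}$ on a large proportion of directions. The plan is to use only this: positivity means there is no cancellation, and the non-integrable tail $\sum_m K(m)=\infty$ should destroy boundedness. The first step is a quantitative lower bound on dyadic shells. For $N\geq1$ I claim
\[
\sum_{N\leq|m|<2N}K_{\mathrm{dis}}^{(jj)}(m)\geq c>0
\]
with $c$ independent of $N$, and consequently
\[
\sum_{1\leq|m|\leq R}K_{\mathrm{dis}}^{(jj)}(m)\geq c'\log R .
\]
To see this, compare with the continuous integral. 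Alternatively, and more simply, use symmetry: $\sum_{|m|\in S}m_j^2/|m|^{d+2}=\frac1d\sum_{|m|\in S}|m|^{-d}$, and the number of lattice points with $N\le |m|<2N$ is $\gtrsim N^d$ for large $N$. Dimension-dependent constants are harmless here, since $d$ is fixed.

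For the $\ell^p$ unboundedness, $1<p<\infty$, test on $f=\mathbf 1_{B_R}$, where $B_R=\{n\in\mathbb Z^d:|n|\leq R\}$. For $n\in B_{R/2}$ and $|m|\leq R/2$ we have $n-m\in B_R$. Since the kernel is nonnegative, we may discard all other terms, which gives
\[
R_{\mathrm{dis}}^{(jj)}f(n)\geq\sum_{1\leq|m|\leq R/2}K(m)\geq c'\log(R/2).
\]
Hence
\[
\|R_{\mathrm{dis}}^{(jj)}f\|_p\geq c'\log(R/2)\,|B_{R/2}|^{1/p}\gtrsim\log R\,\|f\|_p ,
\]
because $|B_{R/2}|\asymp|B_R|$. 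Letting $R\to\infty$ shows there is no bound. I also need to say what the operator even means on $\ell^p$. The defining series for $f\in\ell^p$ may diverge, so I would phrase the result as follows: no constant $C$ satisfies $\|R f\|_p\leq C\|f\|_p$ on finitely supported $f$, for which the sum is finite. The test functions above are of this type, so the argument covers that formulation.

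For the weak type $(1,1)$ failure, the point mass $\delta_0$ gives only $Rf(n)=K(n)\sim|n|^{-d}$. That function does lie in $\ell^{1,\infty}$, so it cannot witness failure, and I will use the same ball test instead. With $f=\mathbf 1_{B_R}$ we have $\|f\|_1\asymp R^d$. At the level $\lambda=c'\log(R/2)/2$, the set where $|Rf|>\lambda$ contains $B_{R/2}$. Therefore
\[
\lambda\,\bigl|\{|Rf|>\lambda\}\bigr|\gtrsim\log R\cdot R^d\asymp\log R\,\|f\|_1\to\infty .
\]
This contradicts any weak $(1,1)$ bound.

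The only real obstacle is the shell lower bound. I would prove it through the averaging identity
\[
\sum_{|m|=r}m_j^2=\frac{r^2}{d}\,\#\{|m|=r\},
\]
which follows from coordinate symmetry. This reduces the shell sum to $\frac1d\sum_{N\le|m|<2N}|m|^{-d}\geq \frac1d(2N)^{-d}\#\{N\le|m|<2N\}$. The count is $\gtrsim N^d$ by comparing with cubes of side $N/\sqrt d$ contained in the annulus.
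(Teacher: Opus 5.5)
Your proof is correct and follows the paper's approach: positivity of the kernel, a lower bound $\gtrsim_d \log R$ for its partial sums, and testing on indicators of large cubes (the paper) or balls (you) for both the strong-type and the weak-type $(1,1)$ failure. The differences are cosmetic. You get the logarithmic bound from the exact coordinate-averaging identity on dyadic shells rather than the paper's cone $\Omega$ with boxes $A_k$. You also treat every $p$ directly with the same test function, whereas the paper proves $\ell^2$ unboundedness and then uses self-adjointness and Riesz--Thorin.
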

\vskip0.1cm
\begin{remark}
Theorem~\ref{thm:diagonal} shows that the obstruction in the diagonal
case is qualitative rather than merely dimensional.  In the continuous
Fourier-multiplier definition, the cancellation needed for
$R^{(jj)}$ is encoded in the singular-integral interpretation of the
operator; after direct sampling, the kernel displayed above is
nonnegative, so this cancellation disappears.  This also clarifies the
trichotomy in our results: the off-diagonal kernel $m_jm_k$ and the
traceless kernel $m_j^2-m_k^2$ retain cancellation, whereas the pure
diagonal kernel $m_j^2$ does not.  The logarithmic accumulation of its
positive lattice mass is precisely the mechanism behind both failures
in Theorem~\ref{thm:diagonal}.
\end{remark}
\vskip0.2cm

There is, however, a third regime. For $j\neq k$, consider the
traceless combination
\[
R_{\mathrm{dis}}^{(jj-kk)}
:=
R_{\mathrm{dis}}^{(jj)}
-
R_{\mathrm{dis}}^{(kk)}.
\]
The corresponding kernel
\[
c_d\frac{m_j^2-m_k^2}{|m|^{d+2}}
\]
recovers cancellation, and boundedness is restored. We first record the sharp $\ell^2$ estimate.
\vskip0.3cm
\begin{theorem}\label{thm:diagonal-difference}
Let $d\geq2$ and let $j,k\in\{1,\ldots,d\}$ be distinct. For $R_{\mathrm{dis}}^{(jj-kk)}:=R_{\mathrm{dis}}^{(jj)}-R_{\mathrm{dis}}^{(kk)}$, we have
\[
4c_d\left(1+\frac{d-2}{2^{d/2}}-\frac{\rho}{4^{d/2}}\right)
\leq
\|R_{\mathrm{dis}}^{(jj-kk)}\|_{\ell^2(\mathbb Z^d)\to\ell^2(\mathbb Z^d)}
\leq
4c_d\left(1+\frac{\mu d}{2^{d/2}}\right)
\]
for some absolute constants $\rho,\mu>0$. In particular, as $d\to\infty$,
\[
\|R_{\mathrm{dis}}^{(jj-kk)}\|_{\ell^2\to\ell^2}
=
4c_d\left(1+(1+o(1))\frac{d}{2^{d/2}}\right).
\]
\end{theorem}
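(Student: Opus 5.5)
The plan is to work on the Fourier side. $R_{\mathrm{dis}}^{(jj-kk)}$ is a convolution operator on $\mathbb Z^d$, so its $\ell^2$ norm equals $\sup_{\xi\in\mathbb T^d}|M(\xi)|$, where
\[
M(\xi)=c_d\sum_{m\neq0}\frac{m_j^2-m_k^2}{|m|^{d+2}}e^{2\pi i m\cdot\xi}.
\]
This series converges only conditionally, since $\sum|m|^{-d}$ diverges logarithmically. To make $M$ rigorous I would use the subordination identity
\[
|m|^{-(d+2)}=\Gamma\bigl(\tfrac{d+2}{2}\bigr)^{-1}\int_0^\infty t^{d/2}e^{-t|m|^2}\,dt.
\]
With $\Theta_t(x)=\sum_{n\in\mathbb Z}e^{-tn^2}e^{2\pi inx}$ and $\Phi_t(x)=\sum_{n}n^2e^{-tn^2}e^{2\pi inx}$, the lattice sum factorizes coordinatewise. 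This gives
\[
M(\xi)=\pi^{-d/2}\int_0^\infty t^{d/2}\prod_{l\neq j,k}\Theta_t(\xi_l)\,\bigl[\Phi_t(\xi_j)\Theta_t(\xi_k)-\Theta_t(\xi_j)\Phi_t(\xi_k)\bigr]\,dt .
\]
The $m=0$ term is automatically absent, and the bracket supplies the cancellation for small $t$. I would justify this formula as the limit of the multipliers of truncated kernels, using Gaussian damping $e^{-\varepsilon|m|^2}$.

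\emph{Lower bound.} I would evaluate at $\xi^*$ with $\xi^*_k=1/2$ and all other coordinates $0$, so that $M(\xi^*)=c_d\sum_{m\ne0}(-1)^{m_k}(m_j^2-m_k^2)|m|^{-(d+2)}$. The shell $|m|=1$ gives exactly $4c_d$, coming from $\pm e_j$ and from $\pm e_k$ with the sign flip. The shell $|m|^2=2$ gives exactly $4(d-2)c_d2^{-d/2}$: the points $\pm e_j\pm e_l$ and $\pm e_k\pm e_l$ with $l\ne j,k$ each contribute $c_d2^{-(d+2)/2}$ with a positive sign, and the points $\pm e_j\pm e_k$ give zero. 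Beyond these shells, every term with $(-1)^{m_k}(m_j^2-m_k^2)\ge0$ can simply be discarded. The remaining negative terms come from two sets: (i) $m_k$ even and nonzero with $m_k^2>m_j^2$, so $|m|^2\ge4$; and (ii) $m_k$ odd with $m_j^2>m_k^2$, so $m_j^2\ge4$. In both cases the weight is bounded by $m_k^2$ or $m_j^2$ with the corresponding coordinate of size at least $2$. Using the subordination formula again, these sums are bounded by
\[
\pi^{-d/2}\int t^{d/2}\,\Theta_t(0)^{d-1}\sum_{|n|\ge2}n^2e^{-tn^2}\,dt .
\]
Splitting at $t\approx1$, with $\Theta_t(0)^{d-1}\le e^{2(d-1)e^{-t}(1+o(1))}$ for large $t$ and Poisson summation for small $t$, I expect a bound of $\rho\,c_d4^{-d/2}$. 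Here the factor $e^{-4t}$ against $t^{d/2}$ produces $\Gamma(\tfrac{d+2}{2})4^{-d/2}$, which is what $c_d$ absorbs, and the $d$-fold theta product costs only an absolute constant.

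\emph{Upper bound.} I would use $|\Theta_t(x)|\le\Theta_t(0)$ and $|\Phi_t(x)|\le\Phi_t(0)$ and split the $t$-integral at a fixed $t_0$. For $t\ge t_0$, expand $\Theta_t(0)=1+2e^{-t}+O(e^{-4t})$ and $\Phi_t(0)=2e^{-t}+8e^{-4t}+\dots$. The $e^{-t}$ term gives the $4c_d$ contribution (two terms of size $2e^{-t}$). Terms with one extra factor $2e^{-t}$ from the $d-2$ theta factors give $O(d\,c_d2^{-d/2})$. Higher products give at most $O\bigl(d^2c_d3^{-d/2}+c_d4^{-d/2}\bigr)$, which is $\le\mu d\,c_d2^{-d/2}$ after adjusting $\mu$. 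For $t\le t_0$, absolute bounds diverge, so the cancellation must be used. I would apply Poisson summation: $\Theta_t(x)=\sqrt{\pi/t}\sum_n e^{-\pi^2(x-n)^2/t}$, and similarly for $\Phi_t$. The main part then becomes the continuous multiplier $(\xi_k^2-\xi_j^2)/|\xi|^2$, periodized, which is bounded by an absolute constant. The error terms carry a factor $e^{-\pi^2/t}$ and are bounded uniformly in $\xi$. The whole small-$t$ part is then $O(C^d)$ with an absolute $C$, which is negligible against $c_d\,d\,2^{-d/2}$, since $c_d2^{-d/2}$ grows like $(d/4\pi e)^{d/2}$. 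This presumably fails for small $d$, but there the constant $\mu$ absorbs it.

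\emph{Main obstacle.} The step I expect to be hardest is this small-$t$ analysis with uniform control in $\xi$ and $d$. There the naive bound $\Theta_t(0)^{d-2}\sim(\pi/t)^{(d-2)/2}$ must be paired with the cancellation in the bracket $\Phi_t\Theta_t-\Theta_t\Phi_t$ to avoid the logarithmic divergence. The first thing I would try is to write the bracket as $\Theta_t(\xi_j)\Theta_t(\xi_k)\bigl[\tfrac{\Phi_t}{\Theta_t}(\xi_j)-\tfrac{\Phi_t}{\Theta_t}(\xi_k)\bigr]$, and to use that $\Phi_t/\Theta_t=-\tfrac{1}{4\pi^2}\partial_x^2\log\Theta_t-\tfrac{1}{4\pi^2}(\partial_x\log\Theta_t)^2$ is uniformly bounded by $C/t$ via Jacobi's transformation. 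Once uniform bounds hold for both ranges of $t$, the asymptotic statement follows at once, since $\rho4^{-d/2}=o(d\,2^{-d/2})$.
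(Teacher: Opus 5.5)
Your upper bound follows the paper's route: split the subordination integral near $t=1$, expand the theta products in lattice shells for large $t$, and use Poisson summation for small $t$. Your identification of the small-$t$ main term with $(r_k^2-r_j^2)/|r|^2$, where $r_l=\operatorname{dist}(\xi_l,\mathbb Z)$, is correct and more explicit than the paper's treatment.

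The lower bound, however, has a genuine gap. At $\xi^*$ the series $\sum_{m\neq0}(-1)^{m_k}(m_j^2-m_k^2)|m|^{-(d+2)}$ converges only conditionally, and its positive and negative parts are both infinite. Take the part of your set (i) where $m_k\in2\mathbb Z$, $|m_k|\ge2|m_j|$ and $|m_k|\ge|m_l|$ for every $l$. There the summand has modulus at least $\frac{3}{4d}|m|^{-d}$. These points fill a fixed proportion of each dyadic shell, so their sum diverges logarithmically, by the same mechanism as in Theorem~\ref{thm:diagonal}.

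Your majorant reflects this. As $t\to0$ one has $\Theta_t(0)^{d-1}\sim(\pi/t)^{(d-1)/2}$ and $\sum_{|n|\ge2}n^2e^{-tn^2}\sim\frac{\sqrt\pi}{2}t^{-3/2}$. So the integrand is $\sim\frac12\pi^{d/2}t^{-1}$ and the integral is $+\infty$. Discarding the positive terms therefore only gives $M(\xi^*)\ge-\infty$. No small-$t$ Poisson estimate can fix this, because the divergence is real. What the sign split destroys is the cancellation between even and odd $m_k$, i.e.\ the fact that $\Theta_t(\tfrac12)=\sum_n(-1)^ne^{-tn^2}$ and $\Phi_t(\tfrac12)$ are exponentially small as $t\to0$.

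The repair, which is what the paper does via $J_{\mathrm{large}}-J_{\mathrm{small}}$, is to split the $t$-integral at $t=1$ \emph{before} separating signs. On $[1,\infty)$ the Gaussian-weighted sums converge absolutely, so your sign split is legitimate there. The shells $|m|^2\le2$ give $4c_d+4(d-2)c_d2^{-d/2}$ up to an error $O(\pi^{-d/2})$. Your majorant restricted to $[1,\infty)$ is $O(c_d4^{-d/2})$, by the same estimate as $R_1(d)$ in Section~2.

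On $(0,1]$ you must keep the one-dimensional factors intact. The dual formulas give $|\Theta_t(\tfrac12)|\lesssim t^{-1/2}e^{-\pi^2/(4t)}$, $|\Phi_t(\tfrac12)|\lesssim t^{-5/2}e^{-\pi^2/(4t)}$, $\Phi_t(0)\lesssim t^{-3/2}$ and $\Theta_t(0)\le At^{-1/2}$. Hence the integrand $t^{d/2}\Theta_t(0)^{d-2}\bigl[\Phi_t(0)\Theta_t(\tfrac12)-\Theta_t(0)\Phi_t(\tfrac12)\bigr]$ is $O(A^{d}t^{-2}e^{-\pi^2/(4t)})$. It contributes $O(C^d)=o(c_d4^{-d/2})$, with the finitely many small $d$ absorbed into $\rho$.

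Separately, drop the idea in your last paragraph. The ratio $\Phi_t(x)/\Theta_t(x)$ is not $O(1/t)$: by the dual formulas it is $\approx\frac1{2t}-\frac{\pi^2r^2}{t^2}$ with $r=\operatorname{dist}(x,\mathbb Z)$, which is of order $t^{-2}$ when $r\asymp1$. Bounding the two ratios separately would in any case lose the exact cancellation of the $\frac1{2t}$ terms and leave a $\log(1/|\xi|)$ divergence as $\xi\to0$. Your explicit main-term computation is the argument that actually gives uniform control.
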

\vskip0.2cm
The same asymptotic persists throughout the strong $\ell^p$ scale, while the weak-type norm admits matching dimension-dependent bounds.
\vskip0.2cm
\begin{theorem}\label{thm:diagonal-difference-lp}
Let $d\geq2$ and let $j,k\in\{1,\ldots,d\}$ be distinct. There exist absolute constants $\gamma,\lambda>0$ such that
\[
4c_d
\leq
\|R_{\mathrm{dis}}^{(jj-kk)}\|_{\ell^1(\mathbb Z^d)\to\ell^{1,\infty}(\mathbb Z^d)}
\leq
4c_d\left(1+\frac{\lambda d}{2^{d/2}}\right),
\]
and, for every $1<p<\infty$,
\[
4c_d\left(1+\frac{d-2}{2^{d/2}}-\frac{\rho}{4^{d/2}}\right)
\leq
\|R_{\mathrm{dis}}^{(jj-kk)}\|_{\ell^p(\mathbb Z^d)\to\ell^p(\mathbb Z^d)}
\leq
(p^*-1)+4c_d\left(1+\frac{\gamma d}{2^{d/2}}\right),
\]
where $p^*=\max\{p,p/(p-1)\}$ and $\rho$ is the absolute constant in Theorem~\ref{thm:diagonal-difference}.

In particular, as $d\to\infty$,
\[
4c_d
\leq
\|R_{\mathrm{dis}}^{(jj-kk)}\|_{\ell^1\to\ell^{1,\infty}}
\leq
4c_d\left[1+(1+o(1))\frac{d}{2^{d/2}}\right],
\]
and for every fixed $1<p<\infty$,
\[
\|R_{\mathrm{dis}}^{(jj-kk)}\|_{\ell^p\to\ell^p}
=
4c_d\left[1+(1+o(1))\frac{d}{2^{d/2}}\right].
\]
\end{theorem}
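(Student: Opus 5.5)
Write $K(m)=c_d\frac{m_j^2-m_k^2}{|m|^{d+2}}\mathbf 1_{m\neq0}$ and split it as $K=K_0+K_1$. The near part $K_0$ is the restriction to the $2d$ unit vectors $\pm e_i$. There $K_0(\pm e_j)=c_d$, $K_0(\pm e_k)=-c_d$, and $K_0(\pm e_i)=0$ for $i\neq j,k$. Hence
\[
\|K_0\|_{\ell^1}=4c_d,
\]
and $K_0*$ has norm at most $4c_d$ on every $\ell^p$ and on $\ell^1\to\ell^{1,\infty}$.

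The remaining terms at the next shell $|m|^2=2$ are the points $\pm e_j\pm e_i$ and $\pm e_k\pm e_i$ with $i\neq j,k$. They contribute total mass $c_d\cdot 8(d-2)\cdot 2^{-(d+2)/2}\cdot\frac{1}{1}$, roughly $4c_d(d-2)2^{-d/2}$. The points $\pm e_j\pm e_k$ carry $m_j^2-m_k^2=0$ and contribute nothing. Shells with $|m|^2=r\ge3$ carry mass $O(c_d\,d^{\,O(1)}r^{-d/2})$ for bounded $r$. The main plan is to show that the far tail $|m|\ge R$ for large fixed $R$, together with the infinite sum, is controlled by a Calderón–Zygmund-type argument with a dimension-free constant.

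For the upper bounds, split $K=K_{\le}+K_{>}$ at some radius $R$. For the finite part, use $\|K_{\le}\|_{\ell^1}\le 4c_d(1+\lambda d2^{-d/2})$, by summing shells $2\le |m|^2\le R^2$. The counting is $\#\{|m|^2=r\}\lesssim (Cd)^{r/2}$ with weight $r^{-d/2}$, so for $d$ large the shells with $r\ge3$ are dominated by $d^{3/2}3^{-d/2}\ll d2^{-d/2}$. For the far part $K_{>}$, compare with the continuous traceless operator $R^{(jj)}-R^{(kk)}$ of norm $p^*-1$ from \eqref{eq:continuous-difference-norm}. Transfer it to $\mathbb Z^d$ by convolving the continuous kernel with $\mathbf 1_{[-1/2,1/2]^d}$, and bound the error by the mean-value difference
\[
\sum_{|m|>R}\sup_{|y|\le\sqrt d/2}|K(m)-K(m+y)|.
\]
This is the delicate step. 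It must be absorbed into $4c_d\gamma d2^{-d/2}$ uniformly in $d$, so $R$ should be taken of order $\sqrt d$, where the gradient bound gives $c_d|m|^{-d-1}\sqrt d$ summed.

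The point requiring care is uniformity over all shells $2<|m|^2<R^2\sim d$, not only bounded $r$. I would bound the lattice counts by a Gaussian/theta estimate,
\[
\sum_{|m|^2=r}1\le e^{tr}\theta(t)^d,
\]
and choose $t$ so that the $r^{-d/2}$ decay dominates. For $\ell^1\to\ell^{1,\infty}$ the same splitting works: the $\ell^1$ piece is bounded trivially, and the far piece has weak-type norm bounded dimension-free by \cite{OSS}-type results or by the transference. Whether that constant can be absorbed into $4c_d\lambda d2^{-d/2}$ depends on $c_d$ being huge. Since $c_d\cdot d2^{-d/2}\to\infty$, any absolute constant is absorbed for large $d$, and small $d$ is handled by enlarging $\lambda$.

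For the lower bounds, apply the $\ell^1\to\ell^{1,\infty}$ bound to $f=\delta_0$. Then $K*\delta_0=K$ takes the value $\pm c_d$ at the four points $\pm e_j,\pm e_k$, and all other values are smaller. Testing the level $\lambda$ just below $c_d$ gives $\lambda\cdot\#\{|K|>\lambda\}\ge 4c_d-\varepsilon$, hence the lower bound $4c_d$.

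For $\ell^p$, we have $\|K\|_{\ell^p\to\ell^p}\ge\|K\|_{\ell^2\to\ell^2}$ by Riesz–Thorin and duality, because the norm is symmetric in $p,p'$ and log-convex. Then invoke Theorem~\ref{thm:diagonal-difference}. The asymptotic statements follow since $p^*-1$ is negligible against $c_d$.
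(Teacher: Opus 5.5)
Your lower bounds match the paper's: the $\delta_0$ test at $\pm e_j,\pm e_k$, and Riesz--Thorin plus duality plus Theorem~\ref{thm:diagonal-difference}. Your upper-bound architecture also matches: you isolate the near lattice mass, whose $\ell^1$ norm is $4c_d(1+(1+o(1))d2^{-d/2})$, and compare the rest with a continuous traceless operator. However, two steps of the upper bounds fail as written.

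\textbf{Weak type.} $\|\cdot\|_{\ell^{1,\infty}}$ is only a quasi-norm, so you cannot add the $\ell^1$ bound for $K_{\le}*f$ to a weak-type bound for $K_{>}*f$. The quasi-triangle inequality gives $2(\|K_{\le}\|_{\ell^1}+C)$, a leading constant $8c_d$, which no absolute $\lambda$ absorbs.

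The missing idea is the paper's asymmetric level splitting. If $A$ is controlled in strong $\ell^1$, then $\{|A+B|>\lambda\}\subset\{|A|>\lambda/(1+\varepsilon)\}\cup\{|B|>\varepsilon\lambda/(1+\varepsilon)\}$. This gives $\|A+B\|_{1,\infty}\le(1+\varepsilon)\|A\|_{1}+\frac{1+\varepsilon}{\varepsilon}\|B\|_{1,\infty}$. One then takes $\varepsilon=3^{-d/2}$: small enough that $\varepsilon\cdot 4c_d=o(c_dd2^{-d/2})$, yet large enough that $\varepsilon^{-1}$ times the weak constant of the remaining piece is still $o(c_dd2^{-d/2})$.

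The Ouyang--Spector--Stockdale bound concerns first-order Riesz transforms, and you do not need it. Because $c_d$ is super-exponential, a crude $C_0^dd^2$ Calder\'on--Zygmund bound suffices once the splitting is asymmetric.

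\textbf{The $\ell^p$ far piece.} Your error term $\sum_{|m|>R}\sup_{y}|K(m)-K(m+y)|$ only sees $|m|>R$. So the operator you are really comparing with is the truncation $T_R$ with kernel $K\mathbf 1_{\{|x|>R\}}$, not $R^{(jj)}-R^{(kk)}$. The identity \eqref{eq:continuous-difference-norm} says nothing about $\|T_R\|_{L^p\to L^p}$. General truncation bounds carry extra $p$- and $d$-dependent constants, which cannot be absorbed into $4c_d\gamma d2^{-d/2}$ with $\gamma$ independent of $p$.

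The paper closes this with the Kucharski--Kwa\'{s}nicki--Wr\'{o}bel factorization. Since $x_j^2-x_k^2$ is harmonic, $K\mathbf 1_{\{|x|\ge1\}}=b*K$ with $b$ the normalized indicator of the unit ball, which gives \eqref{eq:jjkk-truncated-norm} after dilation. Alternatively, you could compare with the untruncated operator. You would then have to put the cube-averaged kernel $\int_Q\int_Q K(z+s-t)\,dt\,ds$ for $0<|z|\le R$ into the $\ell^1$ error; it vanishes at $z=0$ by the $x_j\leftrightarrow x_k$ antisymmetry and has $\ell^1$ norm polynomial in $d$.

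Either way, you also need the boundary annulus $|x|\approx R$, where cube averages straddle the cutoff; this is the paper's term $I_d$.

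Some smaller bookkeeping errors are harmless, because everything beyond the first two shells is at most exponential in $d$ while $c_dd2^{-d/2}$ is super-exponential:
\begin{itemize}
\item $\#\{|m|^2=r\}$ is of order $(2d)^r/r!$, not $(Cd)^{r/2}$; already $\#\{|m|^2=2\}=2d^2$.
\item $|\nabla K(x)|\lesssim c_dd|x|^{-d-1}$, not $c_d|x|^{-d-1}$.
\item $|s-t|$ ranges up to $\sqrt d$, not $\sqrt d/2$. With $R\sim\sqrt d$ you must therefore take $R\ge C\sqrt d$ with $C>1$ to keep the segments away from the origin. The paper's choice $R=d^2$ sidesteps this.
\end{itemize}
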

\vskip0.2cm
The preceding results reveal three distinct effects of direct
discretization. In the off-diagonal case, cancellation survives but
does not prevent a super-exponential dependence on dimension. In the
diagonal case, the loss of cancellation produces a logarithmic
accumulation of lattice mass and destroys boundedness. For the
difference of two diagonal components, cancellation is restored and
the Fourier multiplier method becomes available again. This
trichotomy is one of the main features of the second-order discrete
theory.

We conclude the introduction with a brief description of the ideas
used in the proofs. For $j\neq k$, by symmetry it suffices to study
$R_{\mathrm{dis}}^{(12)}$. We first introduce the associated
continuous--discrete operator
\[
\widetilde R_{\mathrm{dis}}^{(12)}F(x)
=
\sum_{n\in\mathbb Z^d\setminus\{0\}}
c_d\frac{n_1n_2}{|n|^{d+2}}F(x-n),
\qquad x\in\mathbb R^d.
\]
A transference result implies that
$\widetilde R_{\mathrm{dis}}^{(12)}$ and
$R_{\mathrm{dis}}^{(12)}$ have the same operator norm. In the
$\ell^2$ case, the problem is therefore reduced to estimating the
$L^\infty$ norm of the periodic Fourier multiplier
\[
m_{12}(\xi)
=
\operatorname{p.v.}
\sum_{m\in\mathbb Z^d\setminus\{0\}}
c_d\frac{m_1m_2}{|m|^{d+2}}
e^{-2\pi i m\cdot\xi}.
\]
The basic identity
\begin{equation}\label{eq:gamma-representation}
\frac1{|x|^{d+2}}
=
\frac{1}{\Gamma\left(\frac{d+2}{2}\right)}
\int_0^\infty
t^{d/2}e^{-t|x|^2}\,dt
\end{equation}
allows us to convert the multiplier into Gaussian lattice sums. By
parity, the resulting $d$-dimensional sums factor into
one-dimensional theta-type sums. Poisson summation gives a dual
Gaussian representation which is particularly effective on
$0<t\leq1$, while the original lattice expansion is more useful for
$t\geq1$. The contribution of the first few lattice shells then
determines the leading terms in Theorem~\ref{thm:offdiag-l2}.

For the weak-type $(1,1)$ and general $\ell^p$ estimates, we compare
$\widetilde R_{\mathrm{dis}}^{(12)}$ with the truncated continuous
second-order Riesz transform
\[
R_1^{(12)}F(x)
=
c_d
\int_{|y|\geq1}
\frac{y_1y_2}{|y|^{d+2}}F(x-y)\,dy.
\]
Their difference is a convolution operator. The main difficulty is
to obtain a sufficiently sharp $\ell^1$ estimate for the associated
error kernel. This is achieved by combining
\eqref{eq:gamma-representation} with a careful decomposition of the
lattice into near and far regions. These estimates yield
Theorem~\ref{thm:offdiag-lp}.

The diagonal transforms require a different mechanism. We use the
positivity of the kernel and consider the cone
\[
\Omega_j
=
\left\{
m\in\mathbb Z^d:
|m_j|\geq\frac{|m|}{\sqrt d}
\right\}.
\]
On this set,
\[
\frac{m_j^2}{|m|^{d+2}}
\geq
\frac1d\frac1{|m|^d},
\]
and the lattice mass satisfies a logarithmic lower bound of the form
\[
\sum_{\substack{m\in\Omega_j\\0<|m|\leq R}}
\frac1{|m|^d}
\gtrsim_d \log R.
\]
Testing the operator on normalized characteristic functions of large
cubes then gives logarithmic growth and proves both assertions of
Theorem~\ref{thm:diagonal}.

Finally, for
$R_{\mathrm{dis}}^{(jj-kk)}$ the kernel has cancellation again.
Its Fourier multiplier is
\[
m_{jj-kk}(\xi)
=
\operatorname{p.v.}
\sum_{m\in\mathbb Z^d\setminus\{0\}}
c_d
\frac{m_j^2-m_k^2}{|m|^{d+2}}
e^{-2\pi i m\cdot\xi}.
\]
We apply the same Gaussian representation and Poisson summation
scheme as in the off-diagonal case. The upper bound follows from a
small-time/large-time decomposition, while the lower bound is obtained
by evaluating the multiplier at a suitable half-period point; after
taking $j=1$ and $k=2$, one may use
\[
\xi_0=\left(0,\frac12,0,\ldots,0\right).
\]
This yields Theorem~\ref{thm:diagonal-difference}. To obtain the weak-type $(1,1)$ and general $\ell^p$ estimates, we compare the continuous--discrete operator with the truncated continuous transform associated with the harmonic polynomial $x_j^2-x_k^2$. The resulting error kernel is controlled by the same near/far decomposition as in the off-diagonal case, while its leading terms come from the first two lattice shells. This gives Theorem~\ref{thm:diagonal-difference-lp}.

The paper is organized as follows. In Section~2 we prove the optimal
dimension-dependent $\ell^2$ estimate for
$R_{\mathrm{dis}}^{(jk)}$, $j\neq k$. Section~3 is devoted to the
weak-type $(1,1)$ and $\ell^p$ estimates in the off-diagonal case.
Section~4 treats the diagonal transforms $R_{\mathrm{dis}}^{(jj)}$ and
proves both their $\ell^p$ unboundedness and their failure of weak type
$(1,1)$. In Section~5 we determine the optimal $\ell^2$ asymptotic for
the traceless combinations
$R_{\mathrm{dis}}^{(jj)}-R_{\mathrm{dis}}^{(kk)}$, and Section~6
establishes the corresponding weak-type $(1,1)$ and $\ell^p$ estimates.
The Fourier multiplier identities and Poisson summation formulas needed
in the proofs are collected in the appendix.
\section{The $\ell^2$ estimate for $j\ne k$}
In this section we prove the optimal dimension-dependent $\ell^2$ estimate for the off-diagonal second-order discrete Riesz transforms $R_{\mathrm{dis}}^{(jk)}$. By symmetry, it is enough to consider $j=1$ and $k=2$.

Define the continuous-discrete operator $\tilde{R}_{\mathrm{dis}}^{(12)}$
$$\tilde{R}_{\text{dis}}^{(12)}(F)(x)=\sum_{n \in \mathbb{Z}^d \setminus \{0\}}K_{12}(n)F(x-n),F \in L^p(\mathbb{R}^d),x \in \mathbb{R}^d,$$
where $K_{12}(n)=c_d\frac{n_1n_2}{|n|^{d+2}}$. A transference result of Ba\~{n}uelos, Kim and Kwa\'{s}nicki \cite{BKK}, stated below in the form needed here, identifies the operator norms of $R_{\mathrm{dis}}^{(12)}$ and $\widetilde{R}_{\mathrm{dis}}^{(12)}$.

\begin{lemma}[\cite{BKK}]\label{lem:transference}
    For the discrete Riesz transform $R_{\mathrm{dis}}^{(12)}$ and $1 < p < \infty$, the following identity holds
    \begin{equation*}
        \|R_{\mathrm{dis}}^{(12)}\|_{\ell^p(\mathbb{Z}^d) \to \ell^p(\mathbb{Z}^d)} = \|\tilde{R}_{\mathrm{dis}}^{(12)}\|_{L^p(\mathbb{R}^d) \to L^p(\mathbb{R}^d)}.
    \end{equation*}
\end{lemma}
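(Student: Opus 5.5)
The plan is to prove the two inequalities separately, since the identity is a standard transference between convolution on $\mathbb Z^d$ and the "lattice-shift" convolution on $\mathbb R^d$. Write $K=K_{12}$ and note first that $K\notin\ell^1$ in general, so everything is done at the level of finitely supported truncations $K_N=K\mathbf 1_{\{|n|\le N\}}$. For each $N$ the operators $R_N f=K_N*f$ on $\ell^p(\mathbb Z^d)$ and $\widetilde R_N F(x)=\sum_n K_N(n)F(x-n)$ on $L^p(\mathbb R^d)$ are defined. At the end, we will pass to the limit: the norms of the full operators are the limits (or suprema) of those of suitably truncated ones. Alternatively, one works with finitely supported $f$, respectively compactly supported $F$, and uses that both full operators are defined on such functions as absolutely convergent sums.

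\textbf{Step 1: $\|\widetilde R\|_{L^p\to L^p}\le \|R\|_{\ell^p\to\ell^p}$.} Decompose $x=n+y$ with $n\in\mathbb Z^d$ and $y\in[0,1)^d$. For fixed $y$, set $f_y(n)=F(n+y)$. Then
\[
\widetilde R F(n+y)=\sum_m K(m)f_y(n-m)=(Rf_y)(n).
\]
Hence
\[
\|\widetilde RF\|_{L^p}^p=\int_{[0,1)^d}\|Rf_y\|_{\ell^p}^p\,dy\le \|R\|^p\int_{[0,1)^d}\|f_y\|_{\ell^p}^p\,dy=\|R\|^p\|F\|_{L^p}^p .
\]
This direction is exact; the only subtlety is convergence of the sum, which is handled by density of compactly supported $F$.

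\textbf{Step 2: $\|R\|_{\ell^p\to\ell^p}\le\|\widetilde R\|_{L^p\to L^p}$.} Given finitely supported $f$, take $F=\sum_n f(n)\mathbf 1_{n+Q}$ with $Q=[0,1)^d$. Then $\|F\|_{L^p}=\|f\|_{\ell^p}$, and since shifts by lattice vectors preserve the cube tiling, $\widetilde RF=\sum_n (Rf)(n)\mathbf 1_{n+Q}$. Therefore $\|\widetilde RF\|_{L^p}=\|Rf\|_{\ell^p}$, which gives the reverse inequality.

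For Step 2, the piecewise-constant embedding is clean, so I do not expect difficulty there. The main obstacle is purely technical: $K$ is only conditionally summable. The sums defining both operators converge absolutely for finitely supported inputs, since $|K(m)|\lesssim|m|^{-d}$ times bounded $f$ gives a convergent sum only because finitely many translates are involved; at a point, $\sum_m |K(m)||f(n-m)|$ is finite. For general $F\in L^p$, however, one needs the operator defined by density. I would define both operators on dense classes (finitely supported sequences, and compactly supported bounded functions), prove the two inequalities there, and extend by continuity. In Step 1 the dense class is preserved under the map $F\mapsto f_y$, and in Step 2 the map $f\mapsto F$ sends finitely supported sequences to compactly supported functions. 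So the identity $\|R_{\mathrm{dis}}^{(12)}\|=\|\widetilde R_{\mathrm{dis}}^{(12)}\|$ holds, including the statement that one norm is infinite iff the other is.
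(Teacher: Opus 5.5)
Your proposal is correct and follows essentially the same two-step cube-transference argument as the paper: the paper quotes this strong-type identity from \cite{BKK}, but in the proof of Lemma~\ref{lem:weak-transference} it writes out exactly your fiber decomposition $F_x(n)=F(x+n)$ together with the piecewise-constant embedding $F=\sum_n f(n)\mathbf 1_Q(x-n)$. Your density and truncation caveats are more than is needed: since $|K_{12}(m)|\le c_d|m|^{-d}$, the kernel $K_{12}$ lies in $\ell^{p'}$ for every $p<\infty$, so by H\"older both defining sums converge absolutely for every $f\in\ell^p$ and for a.e.\ $x$ when $F\in L^p$.
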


By Lemma~\ref{lem:transference}, it suffices to estimate the $L^2$ norm of the continuous--discrete operator $\widetilde{R}_{\mathrm{dis}}^{(12)}$. By the standard multiplier criterion for convolution operators (see \cite{StWe}) and Lemma~\ref{lem:multiplier} in the Appendix, its Fourier multiplier is
    \begin{equation*}
        m_{12}(\xi) = \mathrm{p.v.} \sum_{m \in \mathbb{Z}^d \setminus \{0\}} K_{12}(m)e^{-2\pi i m \cdot \xi}=\mathrm{p.v.} \sum_{m \in \mathbb{Z}^d \setminus \{0\}}c_d\frac{m_1m_2}{|m|^{d+2}}e^{-2\pi i m \cdot \xi},~~~\xi \in [0, 1]^d.
    \end{equation*}
For comparison, in the one-dimensional discrete Hilbert-transform case, the corresponding multiplier is
$$-\mathrm{p.v.}\sum\limits_{n \in \mathbb{Z} \setminus \{0\}}\frac{2i}{\pi n}\sin(2\pi n \xi),$$
which can be written explicitly as $-i\frac{\xi}{|\xi|}(1-2|\xi|)$ and has $L^\infty$ norm $1$. In higher dimensions no comparably simple closed form is available for $m_{12}$, so a direct estimate of its $L^\infty$ norm is substantially more delicate.

To estimate $\|m_{12}\|_{L^\infty}$ from above and below, we first rewrite the multiplier in a form adapted to Gaussian summation.
Using the fact that $K_{12}(-m)=K_{12}(m)$ and identity (\ref{eq:gamma-representation}), we obtain
\begin{equation*}
 \begin{aligned}
    m_{12}(\xi) &=  \sum_{m \in \mathbb{Z}^d \setminus \{0\}} K_{12}(m)\cos(2\pi m \cdot \xi).\\
    &=  \sum_{m \in \mathbb{Z}^d \setminus \{0\}} c_d \frac{1}{\Gamma\left(\frac{d+2}{2}\right)} \left( \int_0^{\infty} t^{\frac{d}{2}} m_1m_2 e^{-t|m|^2} \mathrm{d}t \right) \cos(2\pi m \cdot \xi) \\
    &= \frac{c_d}{\Gamma \left( \frac{d+2}{2} \right)} \int_0^{\infty} t^{\frac{d}{2}} \sum_{m \in \mathbb{Z}^d \setminus \{0\}} m_1m_2 e^{-t|m|^2} \cos(2\pi m \cdot \xi) \mathrm{d}t,
\end{aligned}
\end{equation*}
where the interchange of the sum and integral is justified by Lemma~\ref{lem:gaussian-representation} in the Appendix.
Set
\begin{equation*}
\begin{aligned}
    S_1(t, x) &= \sum_{n \neq 0} n e^{-tn^2} \sin(2\pi n x), \\
    S_2(t, x) &= \sum_{n \in \mathbb{Z}} e^{-tn^2} \cos(2\pi n x),
\end{aligned}
\end{equation*}
and
$$\displaystyle S(t, \xi) = \sum_{m \in \mathbb{Z}^d \setminus \{0\}} m_1m_2 e^{-t|m|^2} \cos(2\pi m \cdot \xi).$$
Using the sum-to-product formulas of trigonometric functions and the fact
\begin{equation*}
\begin{cases}
    e^{-t|m|^2} = e^{-t m_1^2}e^{-t m_2^2}\cdots e^{-t m_d^2}, \\
    \cos(2\pi m \cdot \xi) = \cos(2\pi m_1\xi_1 + 2\pi m_2\xi_2 + \cdots + 2\pi m_d\xi_d),
\end{cases}
\end{equation*}
and the parity properties of the sine and cosine functions, we have
\begin{equation*}
\begin{aligned}
    S(t, \xi) &= -\left( \sum_{m_1 \neq 0} m_1 e^{-tm_1^2} \sin(2\pi m_1 \xi_1) \right) \left( \sum_{m_2 \neq 0} m_2e^{-tm_2^2} \sin(2\pi m_2 \xi_2) \right) \\
    &\quad  \cdots  \left( \sum_{m_d \in \mathbb{Z}} e^{-tm_d^2} \cos(2\pi m_d \xi_d) \right).
\end{aligned}
\end{equation*}
Consequently, the multiplier admits the representation
\begin{equation*}
    m_{12}(\xi) =  \frac{c_d}{\Gamma \left( \frac{d+2}{2} \right)} \int_0^{\infty} t^{\frac{d}{2}} S(t, \xi) \, \mathrm{d}t.
\end{equation*}
where
\begin{equation*}
    S(t, \xi) = -S_1(t, \xi_1)S_1(t, \xi_2) S_2(t, \xi_3)\cdots S_2(t, \xi_d).
\end{equation*}
\subsection{Upper bound}
We first establish the upper bound for $\|m_{12}\|_{L^\infty([0,1]^d)}$. Splitting the integral at $t=1$, we write
    \begin{equation*}
    \begin{aligned}
        |m_{12}(\xi)| &= \frac{c_d}{\Gamma \left( \frac{d+2}{2} \right)} \left| \int_0^{\infty} t^{\frac{d}{2}} S(t, \xi) \, \mathrm{d}t \right| \\
        &\le \frac{c_d}{\Gamma \left( \frac{d+2}{2} \right)} \left( \int_0^1 t^{\frac{d}{2}} |S(t, \xi)| \, \mathrm{d}t + \int_1^{\infty} t^{\frac{d}{2}} |S(t, \xi)| \, \mathrm{d}t \right),
    \end{aligned}
    \end{equation*}
and estimate the two ranges $0<t\leq1$ and $t\geq1$ separately.

For $0<t\leq1$, we use the same one-dimensional Poisson-summation estimates that appear in the first-order argument \cite[Section~2]{STX}. We recall the short derivation, since the two differentiated theta factors are used repeatedly below. Put
\[
\rho(x)=\operatorname{dist}(x,\mathbb Z)\in[0,1/2].
\]
By Lemma~\ref{lem:poisson-sums},
\[
S_2(t,x)=\sqrt{\frac\pi t}\sum_{k\in\mathbb Z}e^{-\pi^2(x-k)^2/t}.
\]
If $k_0$ is a nearest integer to $x$, then $|x-k|\ge |k-k_0|/2$ for $k\ne k_0$. Hence
\[
\sum_{k\ne k_0}e^{-\pi^2|x-k|^2/t}
\le 2\sum_{\ell\ge1}e^{-\pi^2\ell^2/(4t)}
\le 2\left(1+\frac{2}{\pi^2}\right)e^{-\pi^2/(4t)},
\]
and therefore
\begin{equation}\label{est of S2}
|S_2(t,x)|\le A t^{-1/2}e^{-\pi^2\rho(x)^2/t},
\qquad A=\sqrt\pi\left(3+\frac4{\pi^2}\right).
\end{equation}
Similarly,
\[
S_1(t,x)=\frac{\pi^{3/2}}{t^{3/2}}
\sum_{k\in\mathbb Z}(x-k)e^{-\pi^2(x-k)^2/t}.
\]
Writing $g(u)=u e^{-\pi^2u^2/t}$, the elementary bound
\[
|g'(u)|\le \left(1+\frac{2\pi^2u^2}{t}\right)e^{-\pi^2u^2/t}
\le 2e^{-\pi^2u^2/(2t)}
\]
and the mean value theorem give, exactly as in \cite{STX},
\begin{equation}\label{est of S1}
|S_1(t,x)|\le B t^{-3/2}\rho(x)e^{-\pi^2\rho(x)^2/(2t)},
\qquad B=9\pi^{3/2}.
\end{equation}
If $\rho_j=\rho(\xi_j)$, then \eqref{est of S2}--\eqref{est of S1} and the product formula for $S(t,\xi)$ yield
\begin{equation}\label{est I small}
\begin{aligned}
I_{\mathrm{small}}
&:=\frac{c_d}{\Gamma((d+2)/2)}\int_0^1t^{d/2}|S(t,\xi)|\,dt \\
&\le \frac{c_dB^2A^{d-2}}{\Gamma((d+2)/2)}\rho_1\rho_2
\int_0^1t^{-2}e^{-\pi^2(\rho_1^2+\cdots+\rho_d^2)/(2t)}\,dt \\
&\le c_d\frac{B^2A^{d-2}}{\pi^2\Gamma((d+2)/2)}.
\end{aligned}
\end{equation}
This is lower order compared with the first two large-time lattice contributions.

It remains to treat the large-time regime $t\geq1$.  By separating the
terms $|m|=1$ and estimating the remaining tails by an integral, as in
\cite[Section~2]{STX}, we have
\[
 |S_1(t,x)|\leq2e^{-t}+5e^{-4t},\qquad
 |S_2(t,x)|\leq1+2e^{-t}+3e^{-4t}\leq1+3e^{-t}.
\]

Using the fundamental inequality

\[
(1+x)^{d-2}
\le
1+(d-2)x+\frac{d^2}{2}(1+x)^{d-4} x^2,~~\text{for}~~x>0
\]
and the above estimates, we have
\begin{align*}
    \left|S(t,\xi)\right|&=\left|S_1(t, \xi_1)S_2(t, \xi_2) \cdots S_2(t, \xi_d)\right|\\
    &\le \bigl(2e^{-t}+5e^{-4t}\bigr)^2
\bigl(1+2e^{-t}+3e^{-4t}\bigr)^{d-2}\\
    &\le
4e^{-2t}\left( 1+\left( d-2 \right) \left( 2e^{-t}+3e^{-4t} \right) +\frac{d^2}{2}\left( 1+3e^{-t} \right) ^{d-4}\left( 3e^{-t} \right) ^2 \right) \\
&
\quad+22e^{-5t}\left( 1+2e^{-t}+3e^{-4t} \right) ^{d-2}\\
&\le 4e^{-2t}+8\left( d-2 \right) e^{-3t}+12\left( d-2 \right) e^{-6t}\\
&
\quad+18d^2e^{-4t}\left( 1+3e^{-t} \right) ^{d-4}+22e^{-5t}\left( 1+3e^{-t} \right) ^{d-2}
\end{align*}
A direct calculation gives that
\begin{align}\label{I large}\nonumber
    I_{\mathrm{large}} &= \frac{c_d}{\Gamma \left( \frac{d+2}{2} \right)} \int_1^{\infty} t^{\frac{d}{2}} |S(t, \xi)| \, \mathrm{d}t\\
    &\le c_d\left( \frac{2}{2^{\frac{d}{2}}}+\frac{\frac{8}{3}\left( d-2 \right)}{3^{\frac{d}{2}}}+\frac{2\left( d-2 \right)}{6^{\frac{d}{2}}} \right)+c_d R_1(d)+c_d R_2(d),
\end{align}
where
\[
R_1\left( d \right) :=\frac{18d^2}{\Gamma \left( \frac{d+2}{2} \right)}\int_1^{\infty}{t}^{\frac{d}{2}}e^{-4t}\bigl( 1+3e^{-t} \bigr) ^{d-4}\,dt,\]
and
\[
 R_2\left( d \right) :=\frac{22}{\Gamma \left( \frac{d+2}{2} \right)}\int_1^{\infty}{t}^{\frac{d}{2}}e^{-5t}\bigl( 1+3e^{-t} \bigr) ^{d-2}\,dt.
\]
It remains to control the two remainder terms. Splitting the integral into two parts and using the fact $(1+3e^{-t})^{d-4}\leq (e^{3e^{-t}})^{d-4}\leq e^{3de^{-t}} \leq e^{\frac{3}{d}}$ for $t \in [2\ln d, \infty)$, we have
\[
\begin{aligned}
\int_1^\infty
t^{\frac{d}{2}}e^{-4t}
\bigl(1+3e^{-t}\bigr)^{d-4}\,dt
&=
\int_1^{2\ln d}
t^{\frac{d}{2}}e^{-4t}
\bigl(1+3e^{-t}\bigr)^{d-4}\,dt   \\
&\quad
+
\int_{2\ln d}^{\infty}
t^{\frac{d}{2}}e^{-4t}
\bigl(1+3e^{-t}\bigr)^{d-4}\,dt  \\
&\le
(2\ln d)^{\frac{d+2}{2}}
\left(1+\frac3e\right)^d
+
e^{\frac{3}{d}}\frac{\Gamma\left(\frac{d}{2}+1\right)}
{4^{\frac{d}{2}+1}}.
\end{aligned}
\]
Similarly,
\[
\begin{aligned}
\int_1^\infty
t^{\frac{d}{2}}e^{-5t}
\bigl(1+3e^{-t}\bigr)^{d-2}\,dt
&\le
(2\ln d)^{\frac{d+2}{2}}
\left(1+\frac3e\right)^d
+
e^{\frac{3}{d}}\frac{\Gamma\left(\frac{d}{2}+1\right)}
{5^{\frac{d}{2}+1}}.
\end{aligned}
\]
Therefore,
\begin{align}\label{R1}
 R_1\left( d \right) \le \frac{18d^2\left( 2\ln d \right) ^{\frac{d+2}{2}}\left( 1+\frac{3}{e} \right) ^d}{\Gamma \left( \frac{d+2}{2} \right)}+18d^2e^{\frac{3}{d}}\frac{1}{4^{\frac{d}{2}+1}},
\end{align}

and
\begin{align}\label{R2}
R_2\left( d \right) \le \frac{22\left( 2\ln d \right) ^{\frac{d+2}{2}}\left( 1+\frac{3}{e} \right) ^d}{\Gamma \left( \frac{d+2}{2} \right)}+22e^{\frac{3}{d}}\frac{1}{5^{\frac{d}{2}+1}}.
\end{align}

Combining \eqref{est I small}, \eqref{I large}, \eqref{R1}, and \eqref{R2}, we obtain
\begin{align}\label{explicite upper bound}\nonumber
    ||m_{12}(\xi)||_{L^\infty}
&\le
c_d\left( \frac{2}{2^{\frac{d}{2}}}+\frac{\frac{8}{3}\left( d-2 \right)}{3^{\frac{d}{2}}}+\frac{2\left( d-2 \right)}{6^{\frac{d}{2}}}+18d^2e^{\frac{3}{d}}\frac{1}{4^{\frac{d}{2}+1}}+22e^{\frac{3}{d}}\frac{1}{5^{\frac{d}{2}+1}} \right)\\
&\quad +c_d\left( \frac{18d^2\left( 2\ln d \right) ^{\frac{d+2}{2}}\left( 1+\frac{3}{e} \right) ^d}{\Gamma \left( \frac{d+2}{2} \right)}+\frac{22\left( 2\ln d \right) ^{\frac{d+2}{2}}\left( 1+\frac{3}{e} \right) ^d}{\Gamma \left( \frac{d+2}{2} \right)}+\frac{B^2A^{d-2}}{\Gamma \left( \frac{d+2}{2} \right) \pi ^2} \right) ,
\end{align}
which implies
$$\|R_{\mathrm{dis}}^{(12)}\|_{\ell^2(\mathbb{Z}^d) \to \ell^2(\mathbb{Z}^d)} \le c_d\left( \frac{2}{2^{\frac{d}{2}}}+\frac{\left( \frac{8}{3}+o\left( 1 \right) \right) d}{3^{\frac{d}{2}}} \right) .$$

\subsection{Lower bound}
We now establish the matching lower bound. The elementary estimate from the Introduction already gives a lower bound of order $c_d2^{-d/2}$ for $ \|R_{\mathrm{dis}}^{(jk)}\|_{\ell^2(\mathbb{Z}^d) \to \ell^2(\mathbb{Z}^d)}$ . To recover the second leading term, we evaluate the multiplier at a carefully chosen point.

Recall that
\begin{equation*}
    |m_{12}(\xi)| = \frac{c_d}{\Gamma \left( \frac{d+2}{2} \right)} \left| \int_0^{\infty} t^{\frac{d}{2}} S(t, \xi) \, \mathrm{d}t \right| ,
\end{equation*}
By continuity, we evaluate $m_{12}$ at $\xi_0 = \left(\frac{1}{4}, \frac{1}{4}, 0, \dots, 0\right) \in [0,1]^d$. Then
\begin{equation*}
    -S(t, \xi_0) = S_1\left(t, \frac{1}{4}\right)^2[S_2(t, 0)]^{d-2},
\end{equation*}
where
\begin{equation*}
    S_2\left( t,0 \right) =\sum_{m\in \mathbb{Z}}{e}^{-tm^2}=1+2e^{-t}+2e^{-4t}+\cdots \ge 1+2e^{-t},\quad \text{for\,\,all\,\,}t>0.
\end{equation*}
and
\begin{equation*}
    S_1\left(t, \frac{1}{4}\right) = \sum_{m \neq 0} m e^{-tm^2} \sin\left(\frac{\pi m}{2}\right)=2 \sum_{k=0}^{\infty} (-1)^k (2k+1)e^{-t(2k+1)^2}.
\end{equation*}
By Lemma~\ref{lem:poisson-sums},
$$S_1\left(t, \frac{1}{4}\right)=\frac{\pi}{4t} \sqrt{\frac{\pi}{t}} \sum_{k=0}^{\infty} (-1)^k (2k+1) e^{-\frac{\pi^2(2k+1)^2}{16t}}.$$
For $t\in(0,1]$, the sequence $b_k(t)=e^{-\frac{\pi^2(2k+1)^2}{16t}}$ is strictly decreasing in $k$ and tends to zero. Pairing consecutive terms therefore gives
$$S_1\left(t, \frac{1}{4}\right) = \frac{\pi}{4t}\sqrt{\frac{\pi}{t}}[(b_0(t) - b_1(t)) + (b_2(t) - b_3(t)) + \cdots] > 0~~\text{for}~~~t\in (0,1].$$

For $t\geq \frac{1}{50}$, one checks that $a_k(t)=(2k+1)e^{-t(2k+1)^2}$ is strictly decreasing in $k$ for $k\geq2$ and satisfies $a_k(t)\to0$ as $k\to\infty$.
Thus
\begin{equation*}
\begin{aligned}
    S_1\left(t, \frac{1}{4}\right) &> 2(a_0(t) - a_1(t)) = 2(e^{-t} - 3e^{-9t}), \quad t \ge \frac{1}{50}.
\end{aligned}
\end{equation*}
Moreover, when $t\ge \frac{\ln 3}{8}$, $S_1\left( t,\frac{1}{4} \right) \ge 2\left( e^{-t}-3e^{-9t} \right) \ge 0.$
Hence, for $t\geq \frac{\ln 3}{8}$,
$$
S_{1}^{2}\left( t,\frac{1}{4} \right) >4\left( e^{-t}-3e^{-9t} \right) ^2,
$$
which implies
 $$-S(t, \xi_0) \ge
4\left( e^{-2t}+2\left( d-2 \right) e^{-3t}-6e^{-10t}-12\left( d-2 \right) e^{-11t}+9e^{-18t}+18\left( d-2 \right) e^{-19t} \right).$$
Therefore
\[
\begin{aligned}
|m_{12}(\xi_0)|
&=
\frac{c_d}{\Gamma\left(\frac{d+2}{2}\right)}
\int_0^\infty t^{d/2}\bigl(-S(t,\xi_0)\bigr)\,dt \\
&\geq
\frac{c_d}{\Gamma\left(\frac{d+2}{2}\right)}
\left(
\int_0^\infty t^{d/2}\varphi(t)\,dt
-
\int_0^{\frac{\ln3}{8}} t^{d/2}\varphi(t)\,dt
\right),
\end{aligned}
\]
where
\begin{equation*}
    \varphi \left( t \right) \,:=4\left( e^{-2t}+2\left( d-2 \right) e^{-3t}-6e^{-10t}-12\left( d-2 \right) e^{-11t}+9e^{-18t}+18\left( d-2 \right) e^{-19t} \right).
\end{equation*}

The preceding estimates yield
$$
\begin{aligned}
	|m_{12}\left( \xi _0 \right) |\ge c_d\left( \frac{2}{2^{\frac{d}{2}}}+\frac{\frac{8}{3}\left( d-2 \right)}{3^{\frac{d}{2}}}-\frac{\beta}{10^{\frac{d}{2}}}-\frac{\frac{48}{11}\left( d-2 \right)}{11^{\frac{d}{2}}} \right) .\\
\end{aligned}
$$
Therefore
$$\|R_{dis}^{(jk)}\|_{\ell^2(\mathbb{Z}^d)\rightarrow \ell^2(\mathbb{Z}^d)}\geq c_d\left( \frac{2}{2^{\frac{d}{2}}}+\frac{\frac{8}{3}\left( d-2 \right)}{3^{\frac{d}{2}}}-\frac{\beta}{10^{\frac{d}{2}}}-\frac{\frac{48}{11}\left( d-2 \right)}{11^{\frac{d}{2}}} \right) ,$$
which completes the proof of Theorem~\ref{thm:offdiag-l2}.

\section{The $\ell^{1,\infty}$ and $\ell^p$ estimates for $j\ne k$}
We continue with $j=1$, $k=2$. The strong-type transference identity is Lemma~\ref{lem:transference}; at the endpoint we use the following analogue.
\begin{lemma}\label{lem:weak-transference}
\[
\|\widetilde R_{\mathrm{dis}}^{(12)}\|_{L^1(\mathbb R^d)\to L^{1,\infty}(\mathbb R^d)}
=
\|R_{\mathrm{dis}}^{(12)}\|_{\ell^1(\mathbb Z^d)\to\ell^{1,\infty}(\mathbb Z^d)}.
\]
\end{lemma}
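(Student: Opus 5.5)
We prove the two inequalities separately, following the standard transference scheme but with weak-type quasi-norms in place of strong norms. Write $K=K_{12}$ for the kernel, $T=R_{\mathrm{dis}}^{(12)}$ and $\widetilde T=\widetilde R_{\mathrm{dis}}^{(12)}$. Put $Q=[0,1)^d$. We first restrict to finitely supported $f$ and to $F$ with compact support, which by truncation of the kernel and monotone limits is harmless. The truncations are $K\mathbf 1_{\{|m|\le N\}}$, and we let $N\to\infty$ using Fatou for distribution functions.

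\emph{Step 1: $\|\widetilde T\|_{L^1\to L^{1,\infty}}\le \|T\|_{\ell^1\to\ell^{1,\infty}}$.} For $F\in L^1(\mathbb R^d)$ and $y\in Q$, set $f_y(n)=F(n+y)$. Then $\widetilde TF(n+y)=(Tf_y)(n)$ for every $n\in\mathbb Z^d$, since the sum in $\widetilde T$ only shifts by lattice vectors. Hence
\[
|\{x:|\widetilde TF(x)|>\lambda\}|=\int_Q \#\{n:|Tf_y(n)|>\lambda\}\,dy\le \frac{\|T\|}{\lambda}\int_Q\|f_y\|_{\ell^1}\,dy=\frac{\|T\|}{\lambda}\|F\|_{L^1}.
\]
This direction is exact and involves no loss.

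\emph{Step 2: $\|T\|_{\ell^1\to\ell^{1,\infty}}\le\|\widetilde T\|_{L^1\to L^{1,\infty}}$.} Given finitely supported $f$, the naive choice $F=\sum_n f(n)\mathbf 1_{n+Q}$ reproduces $Tf$ on each cube but gives $\|F\|_{L^1}=\|f\|_{\ell^1}$, so it is already adequate. Indeed $\widetilde TF(n+y)=Tf(n)$ for all $y\in Q$, because $F(n+y-m)=f(n-m)$. Thus $|\{|\widetilde TF|>\lambda\}|=\#\{n:|Tf(n)|>\lambda\}$, and the inequality follows immediately.

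To be safe about measurability and the definition of $\widetilde T$ on $L^1$, we use the truncated kernels. For these the sums are finite, so both identities hold pointwise. We then pass $N\to\infty$: the truncated operators converge pointwise on finitely supported data because the kernel is $O(|m|^{-d})$ and the data are finite sums. For the upper bound on truncations we need a uniform bound, which is the one subtle point. The distribution-function identities of Steps 1–2 hold for each truncation level separately, so we compare $\|T_N\|$ with $\|\widetilde T_N\|$ level by level and then take limits on both sides. This is consistent with how the operator norms are defined, namely via limits of truncations or on a dense class.

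The main obstacle is purely definitional: making sure $\widetilde T$ and $T$ are interpreted the same way (principal value versus absolute convergence) on $L^1$ and $\ell^1$. The combinatorial identities above are otherwise trivial.
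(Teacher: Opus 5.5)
Your Steps 1 and 2 are exactly the paper's proof. For one inequality you slice $F$ along the cosets $y+\mathbb Z^d$, $y\in Q$, and integrate the lattice distribution functions over the cube. For the other you extend $f$ to be constant on the unit cubes $n+Q$, which preserves both the $L^1$ norm and the distribution function.

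You can simply drop the truncation paragraph. Its ``compare level by level, then take limits'' step would need the weak-type norms of the truncations to converge to those of the full operators, and you do not justify that. It is also unnecessary: $|K_{12}(m)|\le c_d$, so all the sums converge absolutely for $f\in\ell^1$, and for a.e.\ $x$ when $F\in L^1(\mathbb R^d)$ because $\int_Q\sum_{n}|F(x+n)|\,dx=\|F\|_{L^1}$. Hence both identities hold directly.
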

\begin{proof}
This is the kernel-independent cube-transference argument of \cite[Lemma~3.1]{STX}; we record the two short steps. Let $Q=[-1/2,1/2)^d$. For $F\in L^1(\mathbb R^d)$ and $x\in Q$, define $F_x(n)=F(x+n)$. Then
\[
\widetilde R_{\mathrm{dis}}^{(12)}F(x+n)=R_{\mathrm{dis}}^{(12)}F_x(n),
\qquad n\in\mathbb Z^d.
\]
Thus, for every $\lambda>0$,
\[
\begin{aligned}
\lambda\bigl|\{y:|\widetilde R_{\mathrm{dis}}^{(12)}F(y)|>\lambda\}\bigr|
&=\lambda\int_Q\#\{n:|R_{\mathrm{dis}}^{(12)}F_x(n)|>\lambda\}\,dx \\
&\le \|R_{\mathrm{dis}}^{(12)}\|_{\ell^1\to\ell^{1,\infty}}
\int_Q\|F_x\|_{\ell^1}\,dx \\
&=\|R_{\mathrm{dis}}^{(12)}\|_{\ell^1\to\ell^{1,\infty}}\|F\|_{L^1},
\end{aligned}
\]
which gives one inequality.

Conversely, for $f\in\ell^1(\mathbb Z^d)$ set
\[
F(x)=\sum_{n\in\mathbb Z^d}f(n)\mathbf1_Q(x-n).
\]
Then $\|F\|_1=\|f\|_1$ and, for $x\in n+Q$,
\[
\widetilde R_{\mathrm{dis}}^{(12)}F(x)=R_{\mathrm{dis}}^{(12)}f(n).
\]
Hence the distribution functions, and therefore the weak $L^1$ quasi-norms, agree:
\[
\|\widetilde R_{\mathrm{dis}}^{(12)}F\|_{L^{1,\infty}}
=\|R_{\mathrm{dis}}^{(12)}f\|_{\ell^{1,\infty}}.
\]
Taking the supremum over $f$ gives the reverse inequality.
\end{proof}

We next isolate the lattice sum responsible for the sharp error term. It is worth noting that the result below may be of independent interest.
\begin{lemma}\label{lem:lattice-sum}
There exists an absolute constant $\delta>0$ such that
$$\sum_{\substack{z\in\mathbb Z^d\\1\le |z|\le d^2}}
    \frac{\left| z_1 \right|\left| z_2 \right|}{\left| z \right|^{d+2}}\leq \frac{2}{2^{\frac{d}{2}}}+\frac{\delta d}{3^{\frac{d}{2}}}.$$
Moreover, as $d\to\infty$,
$$\sum_{\substack{z\in\mathbb Z^d\\1\le |z|\le d^2}}
    \frac{\left| z_1 \right|\left| z_2 \right|}{\left| z \right|^{d+2}}= \frac{2}{2^{\frac{d}{2}}}+\frac{\left( \frac{8}{3}+o\left( 1 \right) \right) d}{3^{\frac{d}{2}}}.$$
\end{lemma}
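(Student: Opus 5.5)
The plan is to mimic the upper-bound argument for $m_{12}$ in Section~2, with absolute values inserted. By \eqref{eq:gamma-representation} (dropping the factor $c_d$),
\[
\sum_{1\le|z|\le d^2}\frac{|z_1||z_2|}{|z|^{d+2}}
\le\sum_{z\ne0}\frac{|z_1||z_2|}{|z|^{d+2}}
=\frac{1}{\Gamma\left(\frac{d+2}{2}\right)}\int_0^\infty t^{d/2}\,T_1(t)^2\,T_2(t)^{d-2}\,dt ,
\]
where $T_1(t)=\sum_{n\ne0}|n|e^{-tn^2}$ and $T_2(t)=\sum_{n\in\mathbb Z}e^{-tn^2}$. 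This follows from Tonelli and the factorization of $e^{-t|z|^2}$. The full sum converges, since $|z_1||z_2|/|z|^{d+2}\le|z|^{-d}$, but only barely. So I would first check that it is finite; if it is not, I would keep the cutoff $|z|\le d^2$ as a factor $e^{-t}$-type weight. Indeed $\sum_{z\neq0}|z_1z_2|/|z|^{d+2}$ behaves like $\int|y_1y_2|/|y|^{d+2}$ at infinity, which diverges logarithmically. The cutoff is therefore essential, and I would handle it by splitting in $t$ rather than by dropping it.

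Concretely, I split the $t$-integral at $t=1$. For $t\ge1$, the crude bounds $T_1(t)\le 2e^{-t}+5e^{-4t}$ and $T_2(t)\le1+2e^{-t}+3e^{-4t}$ (exactly those already used for $S_1,S_2$ in Section~2, which are estimates of absolute sums anyway) give the same expansion as in \eqref{I large}. That expansion yields $2\cdot2^{-d/2}+\frac83(d-2)3^{-d/2}+2(d-2)6^{-d/2}+R_1(d)+R_2(d)$. The bounds \eqref{R1}, \eqref{R2} show that $R_1,R_2$ are $O(d^2 4^{-d/2})$ plus superexponentially small terms, hence $o(d\,3^{-d/2})$ and bounded by $C d\,3^{-d/2}$ uniformly in $d$.

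For $t\le1$ no cancellation is available, so Poisson summation does not help. Instead I would use the cutoff via the truncated identity
\[
\sum_{1\le|z|\le d^2}\frac{|z_1||z_2|}{|z|^{d+2}}\cdot\mathbf 1
\le \frac{1}{\Gamma\left(\frac{d+2}{2}\right)}\Bigl(\int_1^\infty+\int_0^1\Bigr),
\]
and bound the small-$t$ part directly in $z$. Here $\int_0^1 t^{d/2}e^{-t|z|^2}dt$ is, for $|z|^2\ge d$, at most $\Gamma(\frac{d+2}{2})|z|^{-d-2}$ times the lower incomplete-gamma fraction $P(\frac{d+2}{2},|z|^2)$. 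The contribution from $|z|^2\le d$ is at most $\frac{1}{\Gamma((d+2)/2)}\sum_{|z|^2\le d}|z_1z_2|$, which is superexponentially small by a volume count ($\#\{|z|\le\sqrt d\}\le C^d$ against $\Gamma(d/2)$). The contribution from $|z|^2>d$ with $|z|\le d^2$ I would bound by $\sum_{\sqrt d<|z|\le d^2}|z|^{-d}$, comparing to the continuous integral $\omega_{d-1}\int_{\sqrt d}^{d^2}r^{-1}dr\lesssim \frac{2\pi^{d/2}}{\Gamma(d/2)}\log d$. This is superexponentially small compared with $d\,3^{-d/2}$. Lattice-to-integral comparison needs a slight enlargement of radii, which is harmless.

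Combining these gives the bound with an absolute $\delta$; for small $d$ the statement is trivial after enlarging $\delta$. For the asymptotic equality, the matching lower bound comes from keeping only the shells $|z|^2=2$ and $|z|^2=3$. These shells contribute exactly $4\cdot2^{-(d+2)/2}=2\cdot2^{-d/2}$ and $8(d-2)\cdot3^{-(d+2)/2}=\frac83(d-2)3^{-d/2}$, and all terms are nonnegative. The main obstacle is the small-$t$/far-lattice region, where the log divergence forces use of the cutoff $d^2$. I expect the crude volume and $\Gamma$-function bounds there to suffice, but the lattice-point counting must be done with dimension-explicit constants.
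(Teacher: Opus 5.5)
Your argument is correct. It agrees with the paper on the lower bound (the shells $|z|^2=2,3$) and on $t\ge1$: drop the cutoff by positivity, factor, and reuse the Section~2 bounds, which are bounds for the absolute sums anyway. The difference is in how the cutoff is used for $0<t\le1$.

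The paper keeps the factorization for all $t$ by relaxing the spherical cutoff $|z|\le d^2$ to the coordinatewise cutoff $|z_1|,|z_2|\le d^2$, leaving $z_3,\dots,z_d$ free. The integrand then becomes $4t^{d/2}D_d(t)^2\Theta(t)^{d-2}$. On $(0,1]$, Poisson summation gives $\Theta(t)\lesssim\sqrt{\pi/t}$, which absorbs $t^{(d-2)/2}$, and the truncation keeps $D_d(t)\le(d^2+1)^2/2$. So the small-time part is $O(C^{d}d^{8}/\Gamma(\frac{d+2}{2}))$.

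You instead keep the spherical cutoff, give up factorization for small $t$, and estimate directly on the lattice. Points with $|z|\le\sqrt d$ are handled by $\int_0^1t^{d/2}\,dt\le1$ and a $C^d$ point count. Points with $\sqrt d<|z|\le d^2$ are handled by the full Gamma integral (i.e.\ the original summand) compared with $\int|x|^{-d}\,dx$ over an annulus, which gains the factor $\omega_{d-1}=2\pi^{d/2}/\Gamma(d/2)$. Your route needs no Poisson summation. It shows transparently that the small-time piece is just the superexponentially small mass of lattice points at distance $\gtrsim\sqrt d$, with the cutoff entering only through $\log d$; this is the same idea the paper uses for $I_d$ in Lemma~\ref{lem:error-kernel}. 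The paper's route, in exchange, stays inside a single theta-function computation and avoids lattice-point counting with dimension-explicit constants.

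A few points to fix in the write-up:

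\begin{enumerate}
\item The opening display is vacuous. The sentence ``the full sum converges'' is false: both sides are $+\infty$, as you then observe. Start directly from the split of the truncated sum.
\item The remark that Poisson summation ``does not help'' for $t\le1$ is inaccurate. The paper uses it there not for cancellation but to get the size $\Theta(t)\sim\sqrt{\pi/t}$. What fails without a cutoff is the growth $T_1(t)\sim1/t$, which the paper cures with the coordinatewise truncation.
\item The cube comparison at $|z|\approx\sqrt d$ costs a factor $(3/2)^d$, from $|x|\le|z|+\sqrt d/2\le\frac32|z|$. This is more than a slight change of radii, but it is still absorbed by $1/\Gamma(d/2)$.
\end{enumerate}
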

\begin{proof}
Denote
$$B_d=\sum_{\substack{z\in\mathbb Z^d\\1\le |z|\le d^2}}
    \frac{\left| z_1 \right|\left| z_2 \right|}{\left| z \right|^{d+2}}.$$
Clearly,
$$B_d\geq \frac{2}{2^{\frac{d}{2}}}+\sum_{\substack{z_1=z_2=\pm 1\\ |z|= \sqrt{3}}}\frac{\left| z_1 \right|\left| z_2 \right|}{\left| z \right|^{d+2}}=\frac{2}{2^{\frac{d}{2}}}+\frac{\frac{8}{3}\left( d-2 \right)}{3^{\frac{d}{2}}}.$$ For the upper bound we follow the strategy developed in Section~2. Using the identity $\frac{1}{|z|^{d+2}}=\frac{1}{\Gamma\left(\frac{d+2}{2}\right)}
    \int_0^{\infty}t^{\frac{d}{2}}e^{-t|z|^2}\,dt $ we have
\begin{align*}
    B_d
     & =
   \sum_{\substack{z\in\mathbb Z^d\\1\le |z|\le d^2}}
    \frac{\left| z_1 \right|\left| z_2 \right|}{\left| z \right|^{d+2}}
    =
    \sum_{\substack{z\in\mathbb Z^d\\1\le |z|\le d^2}} |z_1||
    z_2|\frac{1}{\Gamma\left(\frac{d+2}{2}\right)}
    \int_0^{\infty}t^{\frac{d}{2}}e^{-t|z|^2}\,dt                     \\
    & \le
    \frac{1}{\Gamma\left(\frac{d+2}{2}\right)}
    \int_0^{\infty}t^{\frac{d}{2}}
    \left(\sum_{z_1=-d^2}^{d^2}|z_1|e^{-tz_1^2}\right)
    \left(\sum_{z_2=-d^2}^{d^2}|z_2|e^{-tz_2^2}\right)
    \left(\sum_{k=-\infty}^{\infty}e^{-tk^2}\right)^{d-2}\,dt           \\
    & =
    \frac{4}{\Gamma\left(\frac{d+2}{2}\right)}
    \int_0^{\infty}{t}^{\frac{d}{2}}D_{d}^{2}\left( t \right) \Theta \left( t \right) ^{d-2}\,dt,
\end{align*}
where
\[
    \Theta(t):=\sum_{k=-\infty}^{\infty}e^{-tk^2},
    \qquad
    D_d(t):=\sum_{k=1}^{d^2}ke^{-tk^2}.
\]
Set
\[
    J_{\mathrm{small}}
    =
    \frac{4}{\Gamma\left(\frac{d+2}{2}\right)}
    \int_0^{1}{t}^{\frac{d}{2}}D_{d}^{2}\left( t \right) \Theta \left( t \right) ^{d-2}\,dt,
\]
and
\[
    J_{\mathrm{large}}
    =
     \frac{4}{\Gamma\left(\frac{d+2}{2}\right)}
    \int_1^{\infty} t^{\frac{d}{2}}D_{d}^{2}\left( t \right) \Theta \left( t \right) ^{d-2}\,dt.
\]

For $t\in(0,1]$, the Poisson summation formula (see Lemma \ref{lem:poisson-sums}) reveals
\[
    \Theta(t)
    =\sqrt{\frac{\pi}{t}}
    \left(1+2\sum_{n=1}^{\infty}e^{-\frac{\pi^2n^2}{t}}\right)
    \le
    4\sqrt{\frac{\pi}{t}}
    \left(1+e^{-\frac{\pi^2}{t}}\right).
\]
Consequently,
\begin{align*}
    J_{\mathrm{small}}
    \le
    4\frac{(4\sqrt{\pi})^{d-2}}{\Gamma(\frac{d+2}{2})}
    \biggl(
    \int_0^1 tD_{d}^{2}(t)\,dt
    +
    \int_0^1
    t\left[
    \left(1+e^{-\frac{\pi^2}{t}}\right)^{d-2}-1
    \right]
    D_{d}^{2}(t)\,dt
    \biggr).
\end{align*}
For the first term,
\[
    \int_0^1tD_{d}^{2}(t)\,dt
    \le\frac{\left( d^2+1 \right) ^4}{4}\int_0^1{tdt}
    = \frac{\left( d^2+1 \right) ^4}{8}.
\]
For the second term, notice that for $t\in(0,1]$
\[
    D_d(t)\leq \sum_{a=1}^{\infty}ae^{-ta^2}
    \le \left(1+\frac1{2t}\right)e^{-t}
    \le \frac{2}{t}.
\]
hence
\begin{align*}
    &\int_0^1t
    \left[
    \left(1+e^{-\frac{\pi^2}{t}}\right)^{d-2}-1
    \right]D_{d}^2{}(t)\,dt
     \le
    4\int_0^1
    \left[
    \left(1+e^{-\frac{\pi^2}{t}}\right)^{d-2}-1
    \right]\frac{dt}{t}                                                 \\
    & =
    4\int_0^1
    \sum_{m=1}^{d-2}\binom{d-2}{m}e^{-\frac{m\pi^2}{t}}\frac{dt}{t}
    \le
    4\sum_{m=1}^{d-2}\binom{d-2}{m}e^{-m\pi^2}           \le 4(1+e^{-\pi^2})^{d-2}.
\end{align*}
Combining these estimates gives
\begin{align}\label{est J small}
    J_{\mathrm{small}}
    \le
    \frac{\left( 4\sqrt{\pi} \right) ^{d-2}\left( d^2+1 \right) ^4}{2\Gamma \left( \frac{d+2}{2} \right)}
    +
    16\frac{\left( 4\sqrt{\pi} \right) ^{d-2}}{\Gamma \left( \frac{d+2}{2} \right)}\left( 1+e^{-\pi ^2} \right) ^{d-2}.
\end{align}

For $J_{\mathrm{large}}$, since $ D_d(t)
    \le \sum\limits_{k=1}^{\infty}ke^{-tk^2}$ we may use the same large-time estimate as in Section~2 to obtain
 \begin{align}\label{est J large}\nonumber
    J_{large}
&\le
\frac{2}{2^{\frac{d}{2}}}+\frac{\frac{8}{3}\left( d-2 \right)}{3^{\frac{d}{2}}}+\frac{2\left( d-2 \right)}{6^{\frac{d}{2}}}+18d^2e^{\frac{3}{d}}\frac{1}{4^{\frac{d+2}{2}}}+22e^{\frac{3}{d}}\frac{1}{5^{\frac{d+2}{2}}}\\
&\quad +\frac{18d^2\left( 2\ln d \right) ^{\frac{d+2}{2}}\left( 1+\frac{3}{e} \right) ^d}{\Gamma \left( \frac{d+2}{2} \right)}+\frac{22\left( 2\ln d \right) ^{\frac{d+2}{2}}\left( 1+\frac{3}{e} \right) ^d}{\Gamma \left( \frac{d+2}{2} \right)}.
 \end{align}
By (\ref{est J small}) and (\ref{est J large}), we have
\begin{align}\label{est Bd}
    B_d=\frac{2}{2^{\frac{d}{2}}}+\frac{\left( \frac{8}{3}+o\left( 1 \right) \right) d}{3^{\frac{d}{2}}}.
\end{align}
which completes the proof of the lemma.
\end{proof}
For $z\in\mathbb Z^d$ put
\[
K_{12}^*(x)=c_d\frac{x_1x_2}{|x|^{d+2}}\mathbf1_{\{|x|\ge1\}},\qquad
\widetilde K_{12}^*(z)=\int_Q\!\int_Q\bigl(K_{12}^*(z+s-t)-K_{12}^*(z)\bigr)\,dt\,ds.
\]
\begin{lemma}\label{lem:error-kernel}
There is an absolute $\eta>0$ such that
\[
\|\widetilde K_{12}^*\|_{\ell^1}\le c_d\left(\frac2{2^{d/2}}+\frac{\eta d}{3^{d/2}}\right),
\]
and, as $d\to\infty$,
\[
\|\widetilde K_{12}^*\|_{\ell^1}\le c_d\left[\frac2{2^{d/2}}+\left(\frac83+o(1)\right)\frac{d}{3^{d/2}}\right].
\]
\end{lemma}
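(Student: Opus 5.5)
Split the sum $\|\widetilde K_{12}^*\|_{\ell^1}=\sum_{z}|\widetilde K_{12}^*(z)|$ into a near region $|z|\le d^2$ and a far region $|z|>d^2$. The near region carries the leading terms. The far region should contribute only a super-exponentially small multiple of $c_d$.

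\emph{Near region.} Here we do not exploit cancellation and simply use the triangle inequality:
\[
|\widetilde K_{12}^*(z)|\le \int_Q\!\int_Q|K_{12}^*(z+s-t)|\,dt\,ds+|K_{12}^*(z)|.
\]
For $1\le|z|\le d^2$ the second term sums to exactly $c_dB_d$, and Lemma~\ref{lem:lattice-sum} gives $c_dB_d\le c_d(2\cdot2^{-d/2}+\delta d\,3^{-d/2})$, with the sharp $\frac83+o(1)$ asymptotic. This is too crude by a factor of two, however, so the first term must be handled differently.

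\emph{Refined near-region estimate.} Write $u=s-t$. Its density is the tent function $\prod_i(1-|u_i|)_+$ on $[-1,1]^d$, which has mass $1$ and variance $1/6$ per coordinate. We then want
\[
\sum_{|z|\le d^2}\int|K_{12}^*(z+u)-K_{12}^*(z)|\,\psi(u)\,du .
\]
At $z=0$ the term $K^*(0)=0$ and $\int|K^*(u)|\psi\le c_d\int_{|u|\ge1}|u|^{-d}\psi$. The set $\{|u|\ge1\}$ carries almost all of the mass, since $E|u|^2=d/6$, so this piece is about $c_d\,\mathbb E|u|^{-d}\approx c_d(6/d)^{d/2}$, which is negligible. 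For $z\ne0$ the shell $|z|^2=2$ is dominant, and there the smoothed kernel stays comparable to $K(z)$. The honest way to obtain $2\cdot2^{-d/2}$ plus lower order is to use the Gaussian identity \eqref{eq:gamma-representation} inside the average:
\[
\int\psi(u)\,\frac{|(z+u)_1(z+u)_2|}{|z+u|^{d+2}}\,du=\frac{1}{\Gamma(\frac{d+2}2)}\int_0^\infty t^{d/2}\prod_i\Big(\int\psi_1(v)\,|\cdot|\,e^{-t(z_i+v)^2}dv\Big)\,dt .
\]
Summing over $z_i\in\mathbb Z$ then factors into one-dimensional smoothed theta functions, which can be compared with $\Theta(t)$ and $D(t)$ exactly as in Lemma~\ref{lem:lattice-sum}.

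So the concrete plan for the near region is: bound $\sum_z|\widetilde K^*(z)|$ by the sum over $z$ of $\int\psi(u)|K^*(z+u)|du$ plus $|K^*(z)|$, and show the averaged sum equals $c_d(\mathrm{small})$. The key fact is that for large $d$ the tent-smoothed Gaussian factor $\int\psi_1(v)e^{-t(z_i+v)^2}dv$ lacks the peak $1$ at $z_i=0$; its value is at most $\approx e^{-t/6}$-type. At large $t$ the product therefore carries $e^{-td/6}$, which decays much faster than $3^{-d/2}$ after the $t$-integration. Then $\|\widetilde K^*\|_{\ell^1}\le c_dB_d+\text{negligible}+\text{far}$, and Lemma~\ref{lem:lattice-sum} gives both the $\eta$ bound and the $\frac83+o(1)$ asymptotic.

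\emph{Far region.} For $|z|>d^2$ we use cancellation: by the mean value theorem, $|\nabla K_{12}^*(x)|\le C c_d\, d\,|x|^{-d-1}$ for $|x|\ge1$, with polynomial dependence on $d$. This gives $|\widetilde K^*(z)|\lesssim c_d d^{3/2}|z|^{-d-1}$, since $|u|\le\sqrt d$ and $|z+u|\ge|z|/2$. Summing over $|z|>d^2$ by comparison with an integral yields
\[
c_d\,d^{3/2}\,\omega_{d-1}\,2^{d}\,d^{-2}\cdot\mathrm{poly}(d)/\Gamma(d/2),
\]
which is super-exponentially smaller than $3^{-d/2}$. The factor $2^{d+1}$ lost by comparing $|z+u|$ with $|z|$ is absorbed because $\omega_{d-1}\sim(2\pi e/d)^{d/2}$.

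\emph{Main obstacle.} The hard step is the uniform smoothed-theta estimate: showing that the $u$-averaged Gaussian lattice sums over $z\ne0$ produce no additional constant multiple of $2^{-d/2}$. If the smoothed factors turn out not to be uniformly small, I would instead split $u$ according to whether $|u|\le\frac12$ or not. I would then use the extreme concentration of $\psi$ near $|u|^2\approx d/6$ to show that the shells $|z|\le\sqrt3$ contribute negligibly after averaging.
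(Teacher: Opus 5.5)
\textbf{There is a genuine gap, in the one step that matters.} Your skeleton matches the paper's: split at $|z|=d^2$ and use the mean value theorem in the far region. In the near region, the triangle inequality reduces matters to $II_d=c_dB_d$, handled by Lemma~\ref{lem:lattice-sum}, plus the averaged term $I_d=\sum_{|z|\le d^2}\int\Phi(u)|K_{12}^*(z+u)|\,du$. The far region and $II_d$ are fine. The gap is $I_d$, which you rightly call the crux but do not control.

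\textbf{Your proposed mechanism for $I_d$ is false.} At $z_i=0$ the tent-smoothed factor $\int\psi_1(v)e^{-t(z_i+v)^2}\,dv=\mathbb E\,e^{-tV^2}$ is \emph{at least} $e^{-t/6}$ by Jensen's inequality. For large $t$ it behaves like $\sqrt{\pi/t}$, so no factor $e^{-td/6}$ appears.

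Worse, once you sum over $z_i\in\mathbb Z$ the smoothing disappears, because $\sum_{z}\psi_1(y-z)=1$. You get exactly $\sum_{z_i}\int\psi_1(v)e^{-t(z_i+v)^2}dv=\sqrt{\pi/t}$ and $\sum_{z_i}\int\psi_1(v)|z_i+v|e^{-t(z_i+v)^2}dv=1/t$. So the Gaussian route merely reproduces $\int_{\mathbb R^d}|x_1x_2||x|^{-d-2}\,dx$, which diverges logarithmically. To avoid this you must restore the cutoffs $|x|\ge1$ and $|z|\le d^2$, and those destroy the product structure you rely on.

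Your fallback (concentration of $\psi$ near $|u|^2\approx d/6$) only treats the first few shells. The averaged term does not vanish where $K_{12}(z)$ does (e.g.\ $z=e_3$), so it cannot be bounded shell by shell by a multiple of $|K_{12}(z)|$. Pointwise bounds also cannot be summed over the enormous number of lattice points with $|z|\le d^2$.

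\textbf{The missing idea needs no Gaussian analysis.} Use the partition of unity $\sum_{z\in\mathbb Z^d}\Phi(x-z)=1$, and note that $\Phi(x-z)\ne0$ with $|z|\le d^2$ forces $|x|\le d^2+\sqrt d$. Then
\[
I_d\le c_d\int_{1\le|x|\le d^2+\sqrt d}\frac{|x_1x_2|}{|x|^{d+2}}\,dx
=c_d\log(d^2+\sqrt d)\int_{\mathbb S^{d-1}}|\theta_1\theta_2|\,d\sigma(\theta)
\le\frac d2\log(d^2+\sqrt d),
\]
using $c_d|\mathbb S^{d-1}|=d$ and $|\theta_1\theta_2|\le\frac12$. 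This is polynomial in $d$, hence $o(c_d d\,3^{-d/2})$, since $c_d3^{-d/2}$ grows super-exponentially.

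So the plain triangle inequality is not "too crude by a factor of two": the averaged term is negligible, not comparable to $c_dB_d$. Relatedly, your claim that the smoothed kernel on the shell $|z|^2=2$ stays comparable to $K_{12}(z)$ is also wrong. It is far smaller, because $|z+u|^2$ concentrates near $2+d/6$.
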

\begin{proof}
We use the near/far decomposition introduced for the first-order kernel in \cite[Section~3]{STX}. Differentiating
$K_{12}(x)=c_dx_1x_2|x|^{-d-2}$ gives $|\nabla K_{12}(x)|\le Cc_dd|x|^{-d-1}$. Since $|s-t|\le\sqrt d$ for $s,t\in Q$, the mean value theorem yields, whenever $|z|>d^2$,
\[
|\widetilde K_{12}^*(z)|\le Cc_dd^{3/2}|z|^{-d-1}.
\]
As in \cite[Section~3]{STX}, comparison of the disjoint unit cubes $z+Q$ with the exterior region $\{|x|>d^2/2\}$ gives
\begin{equation}\label{est far region}
\begin{aligned}
\sum_{|z|>d^2}|\widetilde K_{12}^*(z)|
&\lesssim c_dd^{3/2}\sum_{|z|>d^2}|z|^{-d-1} \\
&\lesssim c_dd^{3/2}\int_{|x|>d^2/2}|x|^{-d-1}\,dx
=o\left(c_d\frac d{3^{d/2}}\right).
\end{aligned}
\end{equation}
For $|z|\le d^2$, let $\Phi(u)=\prod_{r=1}^d(1-|u_r|)_+$. The change of variables $u=s-t$ gives
\[
\widetilde K_{12}^*(z)=\int_{[-1,1)^d}\Phi(u)
\bigl(K_{12}^*(z+u)-K_{12}^*(z)\bigr)\,du,
\]
and the partition of unity $\sum_{z\in\mathbb Z^d}\Phi(x-z)=1$ gives
\begin{equation}\label{est near region}
\sum_{|z|\le d^2}|\widetilde K_{12}^*(z)|\le I_d+II_d,
\end{equation}
where
\[
I_d=\sum_{|z|\le d^2}\int\Phi(u)|K_{12}^*(z+u)|\,du,
\qquad II_d=\sum_{|z|\le d^2}|K_{12}^*(z)|.
\]
If $\Phi(x-z)\ne0$ and $|z|\le d^2$, then $|x|\le d^2+\sqrt d$. Hence the partition of unity and polar coordinates give
\begin{equation}\label{est Id}
\begin{aligned}
I_d
&\le c_d\int_{1\le|x|\le d^2+\sqrt d}\frac{|x_1x_2|}{|x|^{d+2}}\,dx \\
&=c_d\log(d^2+\sqrt d)\int_{\mathbb S^{d-1}}|\theta_1\theta_2|\,d\sigma(\theta) \\
&=O\!\left(c_d\log d\,\frac{\pi^{(d+1)/2}}{\Gamma((d+1)/2)}\right)
=o\left(c_d\frac d{3^{d/2}}\right).
\end{aligned}
\end{equation}
Lemma~\ref{lem:lattice-sum} gives
\begin{equation}\label{est IId}
II_d\le c_d\left[\frac2{2^{d/2}}+\left(\frac83+o(1)\right)\frac d{3^{d/2}}\right].
\end{equation}
Equations \eqref{est far region}--\eqref{est IId} prove the asymptotic upper bound; the uniform estimate follows by enlarging the absolute constant for the remaining dimensions.
\end{proof}

\subsection{The $\ell^{1,\infty}$ case}

For the lower bound, take $f=\delta_0$. At the four points
$\pm e_1\pm e_2$,
\[
|R_{\mathrm{dis}}^{(12)}f(n)|
=
\frac{c_d}{2^{d/2+1}}.
\]
Taking
$\lambda=(1-\varepsilon)c_d/2^{d/2+1}$ and letting
$\varepsilon\rightarrow0$, we obtain
\[
\|R_{\mathrm{dis}}^{(12)}\|_{\ell^1\to\ell^{1,\infty}}
\geq
\frac{2c_d}{2^{d/2}}.
\]

We turn to the upper bound. For $f\in\ell^1(\mathbb Z^d)$, let
\[
F(x)
=
\sum_{n\in\mathbb Z^d}
f(n)\mathbf 1_Q(x-n),
\qquad
Q=[-1/2,1/2)^d.
\]
As in \cite[Section~3]{STX}, write
\[
\widetilde R_{\mathrm{dis}}^{(12)}F
=
R_1^{(12)}F+E.
\]
The same cube computation gives
\begin{equation}\label{eq:offdiag-error-L1}
\|E\|_{L^1(\mathbb R^d)}
\leq
\|\widetilde K_{12}^*\|_{\ell^1(\mathbb Z^d)}
\|f\|_{\ell^1(\mathbb Z^d)}.
\end{equation}

To combine the two terms without losing the sharp leading constant,
we use an asymmetric weak-type decomposition. Set
\[
\varepsilon_d=3^{-d/2}.
\]
For every $\lambda>0$,
\[
\left\{
\left|
\widetilde R_{\mathrm{dis}}^{(12)}F
\right|>\lambda
\right\}
\subset
\left\{
|E|>\frac{\lambda}{1+\varepsilon_d}
\right\}
\cup
\left\{
|R_1^{(12)}F|
>
\frac{\varepsilon_d\lambda}{1+\varepsilon_d}
\right\}.
\]
Consequently,
\begin{align}
\|\widetilde R_{\mathrm{dis}}^{(12)}F\|_{L^{1,\infty}}
&\leq
(1+\varepsilon_d)\|E\|_{L^1}
+
\frac{1+\varepsilon_d}{\varepsilon_d}
\|R_1^{(12)}F\|_{L^{1,\infty}}
\nonumber\\
&=
(1+3^{-d/2})\|E\|_{L^1}
+
(1+3^{-d/2})3^{d/2}
\|R_1^{(12)}F\|_{L^{1,\infty}}.
\label{eq:offdiag-weak-splitting}
\end{align}

On the other hand, Janakiraman's argument \cite{Ja} gives
\[
\|R_1^{(12)}F\|_{L^{1,\infty}(\mathbb R^d)}
\leq
C'(\log d)^2\|F\|_{L^1(\mathbb R^d)}
=
C'(\log d)^2\|f\|_{\ell^1(\mathbb Z^d)},
\]
where $C'>0$ is independent of $d$. Hence the second term in
\eqref{eq:offdiag-weak-splitting} is bounded by
\[
C'(1+3^{-d/2})3^{d/2}(\log d)^2
\|f\|_{\ell^1}.
\]
By Stirling's formula,
\[
3^{d/2}(\log d)^2
=
o\left(
c_d\frac{d}{3^{d/2}}
\right),
\qquad d\to\infty.
\]
Thus the contribution of the truncated continuous transform is
negligible at the scale of the second main term.

Combining \eqref{eq:offdiag-error-L1},
\eqref{eq:offdiag-weak-splitting}, and
Lemma~\ref{lem:error-kernel}, we obtain
\[
\|\widetilde R_{\mathrm{dis}}^{(12)}F\|_{L^{1,\infty}}
\leq
c_d
\left[
\frac{2}{2^{d/2}}
+
\left(\frac83+o(1)\right)
\frac{d}{3^{d/2}}
\right]
\|f\|_{\ell^1}.
\]
Finally, Lemma~\ref{lem:weak-transference} yields
\[
\|R_{\mathrm{dis}}^{(12)}\|_{\ell^1\to\ell^{1,\infty}}
\leq
c_d
\left[
\frac{2}{2^{d/2}}
+
\left(\frac83+o(1)\right)
\frac{d}{3^{d/2}}
\right].
\]
The non-asymptotic upper bound in
Theorem~\ref{thm:offdiag-lp} follows in the same way, after enlarging
the absolute constant to absorb the finitely many small dimensions.

\subsection{The $\ell^p$ case}
Because $K_{12}$ is real and even, $\widetilde R_{\mathrm{dis}}^{(12)}$ is self-adjoint. Thus duality and Riesz--Thorin imply
\[
\|R_{\mathrm{dis}}^{(12)}\|_{\ell^p\to\ell^p}\ge\|R_{\mathrm{dis}}^{(12)}\|_{\ell^2\to\ell^2},
\]
which, together with Theorem~\ref{thm:offdiag-l2}, gives the lower bound in Theorem~\ref{thm:offdiag-lp}.

For the upper bound let $R_1^{(12)}$ be convolution with $K_{12}^*$. Since $x_1x_2$ is a homogeneous harmonic polynomial of degree two, the higher-order truncation factorization of Kucharski--Kwa\'{s}nicki--Wr\'{o}bel \cite{KKW}, together with dilation and strong convergence of the truncations, gives
\begin{equation}\label{eq:truncated-second-order-norm}
\|R_1^{(12)}\|_{L^p\to L^p}=\|R^{(12)}\|_{L^p\to L^p}=\frac{p^*-1}{2}.
\end{equation}
Proposition~6.1 of \cite{BKK} and Lemma~\ref{lem:error-kernel} now give
\begin{equation}\label{eq:lp-upper}
\|R_{\mathrm{dis}}^{(jk)}\|_{\ell^p\to\ell^p}
\le\frac{p^*-1}{2}+c_d\left[\frac2{2^{d/2}}+\left(\frac83+o(1)\right)\frac d{3^{d/2}}\right].
\end{equation}
For fixed $p$, $(p^*-1)/2=o(c_dd/3^{d/2})$, and the lower bound from Theorem~\ref{thm:offdiag-l2} together with \eqref{eq:lp-upper} proves the stated sharp asymptotic.

\section{The diagonal case $j=k$}
In this section we prove both assertions of Theorem~\ref{thm:diagonal}. By symmetry, it is enough to consider $j=k=1$. Once unboundedness on $\ell^2$ is established, self-adjointness and interpolation rule out boundedness on every $\ell^p$, $1<p<\infty$. The argument is based on positivity rather than Fourier multipliers.

The operator is
\[
R_{\mathrm{dis}}^{(11)}f(n)=\sum_{m\in\mathbb Z^d\setminus\{0\}}K_{11}(m)f(n-m),
\qquad K_{11}(m)=c_d\frac{m_1^2}{|m|^{d+2}},
\]
and we exploit the mass of this nonnegative kernel on the cone
\[
\Omega=\left\{y\in\mathbb Z^d:|y_1|\ge \frac{|y|}{\sqrt d}\right\}.
\]

\begin{lemma}\label{lem:cone}
For every $d\ge2$ there exist constants $u_d,v_d>0$, depending only on $d$, such that
\[
\sum_{\substack{y\in\Omega\\0<|y|\le R}}|y|^{-d}\ge u_d\log R,
\qquad R\ge v_d.
\]
\end{lemma}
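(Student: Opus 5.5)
The plan is to bound the lattice sum from below by a dyadic decomposition, using a simple subcone of $\Omega$ where lattice points are easy to count. Since constants may depend on $d$, no sharpness is needed. Consider the set
\[
\Omega'=\{y\in\mathbb Z^d:\ |y_r|\le |y_1|\ \text{for all } r\},
\]
which is contained in $\Omega$: if $|y_r|\le|y_1|$ for all $r$, then $|y|^2\le d\,y_1^2$, i.e. $|y_1|\ge |y|/\sqrt d$. So it suffices to bound $\sum_{y\in\Omega',\,0<|y|\le R}|y|^{-d}$ from below.

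Group the points of $\Omega'$ according to the value $a=|y_1|\ge1$. For fixed $a$ there are exactly $2(2a+1)^{d-1}\ge 2(2a)^{d-1}$ such points $y$, each satisfying $a\le|y|\le\sqrt d\,a$. Each point therefore contributes at least $(\sqrt d\,a)^{-d}$, and all of them lie in the ball $|y|\le R$ provided $a\le R/\sqrt d$. Hence
\[
\sum_{\substack{y\in\Omega\\0<|y|\le R}}|y|^{-d}\ \ge\ \sum_{1\le a\le R/\sqrt d} 2(2a)^{d-1}(\sqrt d\,a)^{-d}
=\frac{2^{d}}{d^{d/2}}\sum_{1\le a\le R/\sqrt d}\frac1a .
\]

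The harmonic sum satisfies $\sum_{a\le N}1/a\ge\log(N+1)\ge\log N$. With $N=\lfloor R/\sqrt d\rfloor$ we have $N+1\ge R/\sqrt d$, so the sum is at least $\log R-\tfrac12\log d$. This is $\ge\tfrac12\log R$ once $R\ge d$. Taking
\[
u_d=\frac{2^{d-1}}{d^{d/2}},\qquad v_d=d
\]
gives the claim.

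No step is a genuine obstacle. The only points to check are the inclusion $\Omega'\subset\Omega$ and the exact count of lattice points with $|y_1|=a$ and $|y_r|\le a$; both are elementary, so the argument is essentially bookkeeping.
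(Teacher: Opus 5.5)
Your proof is correct. It uses the same idea as the paper's proof: restrict to a subcone of $\Omega$ where lattice points can be counted, show that each scale carries a fixed amount of mass, and sum over scales to obtain $\log R$. The bookkeeping differs, however.

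The paper works in a thin half-cone $\{y_1>0,\ |y_j|\le \alpha y_1\}$ with $\alpha=(2\sqrt{d-1})^{-1}$. It cuts this into dyadic blocks $A_k=\{2^k\le y_1<2^{k+1},\ |y_j|\le\lfloor\alpha 2^k\rfloor\}$ and introduces a threshold $K_0$ to control the floor. It then shows each block carries mass at least $w_d=\alpha^{d-1}2^{-2d}$. Counting the $\lfloor\log_2R\rfloor-1-K_0$ admissible blocks gives $\log R$, with $u_d,v_d$ left implicit.

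You instead take the $\ell^\infty$-cone $\{|y_r|\le|y_1|\}$ and slice it by the exact value $|y_1|=a$. The count $2(2a+1)^{d-1}$ is exact, each slice has mass at least $2^d d^{-d/2}/a$, and the logarithm comes directly from the harmonic series. This gives explicit constants $u_d=2^{d-1}d^{-d/2}$ and $v_d=d$; your $u_d$ is in fact much larger than the paper's, though only positivity matters for Theorem~\ref{thm:diagonal}.

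The two choices of cone trade losses differently. Your wider cone costs the factor $d^{-d/2}$ from $|y|\le\sqrt d\,a$. The paper's thin cone keeps $|y|$ comparable to $y_1$ uniformly in $d$, but pays $\alpha^{d-1}$ in the point count. Since the constant may depend on $d$, either is fine.

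Your opening sentence announces a dyadic decomposition, but the argument actually uses unit steps in $|y_1|$; this is only a wording slip.
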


\begin{proof}
It is enough to work in $\Omega_+=\{y:y_1\ge |y|/\sqrt d\}$. Put
$\alpha=(2\sqrt{d-1})^{-1}$ and, for $k\ge0$, define
\[
A_k=\left\{y\in\mathbb Z^d:2^k\le y_1<2^{k+1},\ |y_j|\le\lfloor\alpha2^k\rfloor,
\ 2\le j\le d\right\}.
\]
If $y\in A_k$, then $|y_j|\le\alpha y_1$ for $j\ge2$, so $A_k\subset\Omega_+$; moreover $|y|<2^{k+2}$. Let
$K_0=\lceil\log_2(2\sqrt{d-1})\rceil$. For $k\ge K_0$,
\[
|A_k|=2^k\bigl(2\lfloor\alpha2^k\rfloor+1\bigr)^{d-1}
\ge \alpha^{d-1}2^{kd},
\qquad |y|^{-d}\ge2^{-(k+2)d}\quad(y\in A_k).
\]
Consequently,
\[
\sum_{y\in A_k}|y|^{-d}\ge \alpha^{d-1}2^{-2d}=:w_d.
\]
Taking $K=\lfloor\log_2R\rfloor-2$ and summing over the disjoint sets $A_k$, $K_0\le k\le K$, gives
\[
\sum_{\substack{y\in\Omega\\0<|y|\le R}}|y|^{-d}
\ge (K-K_0+1)w_d\ge u_d\log R
\]
for all sufficiently large $R$. This proves the lemma after increasing the threshold $v_d$ if necessary.
\end{proof}

\subsection{Failure of strong type $(p,p)$}
Let
\[
B_N=[-N,N]^d\cap\mathbb Z^d,\qquad Q_N=[-N/2,N/2]^d\cap\mathbb Z^d,
\qquad f_N=(\#B_N)^{-1/2}\mathbf1_{B_N}.
\]
Then $\|f_N\|_2=1$. For $n\in Q_N$, positivity and the fact that $n-Q_N\subset B_N$ give
\[
\begin{aligned}
R_{\mathrm{dis}}^{(11)}f_N(n)
&\ge \frac1{(\#B_N)^{1/2}}\sum_{y\in Q_N\setminus\{0\}}K_{11}(y)
 \ge \frac{c_d}{d(\#B_N)^{1/2}}
 \sum_{\substack{y\in\Omega\\0<|y|\le N/2}}|y|^{-d} \\
&\ge \frac{c\log N}{(\#B_N)^{1/2}}
\end{aligned}
\]
for all sufficiently large $N$, where $c>0$ depends only on $d$. Therefore when $
N\rightarrow \infty ,
$
\[
\|R_{\mathrm{dis}}^{(11)}f_N\|_2^2
\ge \frac{c^2(\log N)^2\#Q_N}{\#B_N}
= c^2(\log N)^2\left(\frac{N+1}{2N+1}\right)^d
\ge \frac{c^2}{3^d}(\log N)^2\longrightarrow\infty.
\]
Thus $R_{\mathrm{dis}}^{(11)}$ is unbounded on $\ell^2$. Since its kernel is real and even, the operator is self-adjoint. If it were bounded on some $\ell^p$, then it would also be bounded on $\ell^{p'}$, and Riesz--Thorin interpolation would imply boundedness on $\ell^2$, a contradiction. Hence it is unbounded on every $\ell^p$, $1<p<\infty$.

\subsection{Failure of weak type $(1,1)$}
The endpoint failure uses the same positivity mechanism. Put
\[
g_N=(\#B_N)^{-1}\mathbf1_{B_N},\qquad \|g_N\|_1=1.
\]
For every $n\in Q_N$, the preceding calculation gives
\[
R_{\mathrm{dis}}^{(11)}g_N(n)
\ge \frac{c\log N}{\#B_N}.
\]
Choosing $\lambda_N=c\log N/(2\#B_N)$, we obtain when $
N\rightarrow \infty ,
$
\[
\begin{aligned}
\|R_{\mathrm{dis}}^{(11)}g_N\|_{\ell^{1,\infty}}
&\ge \lambda_N\#\{n:|R_{\mathrm{dis}}^{(11)}g_N(n)|>\lambda_N\} \\
&\ge \frac{c\log N}{2\#B_N}\#Q_N
\ge \frac{c\log N}{2\cdot3^d}\longrightarrow\infty.
\end{aligned}
\]
This completes the proof of Theorem~\ref{thm:diagonal}.

\section{The $\ell^2$ estimate for $jj-kk$}
We finally turn to the traceless combination $R_{\mathrm{dis}}^{(jj-kk)}$. Since the Fourier-analytic argument parallels Section~2, we present only the modifications needed here. By symmetry we take $j=1$ and $k=2$. The transference and multiplier arguments give
$$
\|R_{\mathrm{dis}}^{(11-22)}\|_{\ell^2\to\ell^2}=\lVert m_{11-22} \rVert _{L^{\infty}\left( \left[ 0,1 \right] ^d \right)}\ ,
$$
where $m_{11-22}(\xi) =\mathrm{p.v.} \sum_{m \in \mathbb{Z}^d \setminus \{0\}}c_d\frac{m_1^2-m_2^2}{|m|^{d+2}}e^{-2\pi i m \cdot \xi},~~~\xi \in [0, 1]^d.$ 

For convenience, set
\begin{equation*}
\begin{aligned}
    V_2(t, x) &= \sum_{n \neq 0} n^2 e^{-tn^2} \cos(2\pi n x), \\
    S_2(t, x) &= \sum_{n \in \mathbb{Z}} e^{-tn^2} \cos(2\pi n x),
\end{aligned}
\end{equation*}
and
$$\displaystyle M(t, \xi) = \sum_{m \in \mathbb{Z}^d \setminus \{0\}} (m_1^2-m_2^2) e^{-t|m|^2} \cos(2\pi m \cdot \xi).$$
We obtain
\begin{equation*}
 \begin{aligned}
    m_{11-22}(\xi) 
    &=  \frac{c_d}{\Gamma \left( \frac{d+2}{2} \right)}\int_0^{\infty}{t^{\frac{d}{2}}}M\left( t,\xi \right) dt \\
    &= \frac{c_d}{\Gamma \left( \frac{d+2}{2} \right)}\int_0^{\infty}{t^{\frac{d}{2}}}\left( V_2\left( t,\xi _1 \right) S_2\left( t,\xi _2 \right) -V_2\left( t,\xi _2 \right) S_2\left( t,\xi _1 \right) \right) \prod_{j=3}^d{S_2\left( t,\xi _j \right)}dt.
\end{aligned}
\end{equation*}
\subsection{Upper bound}
We first estimate $\|m_{11-22}\|_{L^\infty([0,1]^d)}$. Splitting at $t=1$,
\begin{equation*}
 \begin{aligned}
    |m_{11-22}(\xi) |
    &=  \frac{c_d}{\Gamma \left( \frac{d+2}{2} \right)}|\int_0^{\infty}{t^{\frac{d}{2}}}M\left( t,\xi \right) dt| \\
    &\le \frac{c_d}{\Gamma \left( \frac{d+2}{2} \right)}\left( \left| \int_0^1{t^{\frac{d}{2}}M\left( t,\xi \right) dt} \right|+\left| \int_1^{\infty}{t^{\frac{d}{2}}M\left( t,\xi \right)}dt \right| \right)  \\
    & := I_{small}+I_{large}.
\end{aligned}
\end{equation*}
We again treat the small- and large-time regimes separately.

For $t>1$, we have the following estimate
\begin{equation*}
\begin{aligned}
   \left| S_2\left( t,x \right) \right|&\le \sum_{m\in \mathbb{Z}}{e^{-tm^2}}\le 1+2e^{-t}+3e^{-4t},\\
   \left| V_2\left( t,x \right) \right|&\le \sum_{m\in \mathbb{Z}}{m^2e^{-tm^2}}\le 2e^{-t}+8e^{-4t}+28e^{-9t},
\end{aligned}
\end{equation*}
and
$$
I_{large}\le 2\frac{c_d}{\Gamma \left( \frac{d+2}{2} \right)}\int_1^{\infty}{t^{\frac{d}{2}}}\left( 2e^{-t}+8e^{-4t}+28e^{-9t} \right) \left( 1+2e^{-t}+3e^{-4t} \right) ^{d-1}dt.
$$
As in Section~2, the leading contribution is bounded by
$$
2\frac{c_d}{\Gamma \left( \frac{d+2}{2} \right)}\int_1^{\infty}{t^{\frac{d}{2}}}2e^{-t}\left( 1+2\left( d-1 \right) e^{-t} \right) dt\le 4c_d\left( 1+\frac{d-1}{2^{\frac{d}{2}}} \right) .
$$

For $0<t\leq1$, the Poisson-summation estimates from Section~2 show that $I_{\mathrm{small}}$ is lower order relative to the large-time contribution.

Combining the two regimes gives
$$
\|R_{\mathrm{dis}}^{(11-22)}\|_{\ell^2\to\ell^2}=\lVert m_{11-22} \rVert _{L^{\infty}}\le 4c_d\left( 1+\frac{\left( 1+o\left( 1 \right) \right) d}{2^{\frac{d}{2}}} \right) .
$$
\subsection{Lower bound}
For the lower bound, we evaluate the continuous multiplier at the half-period point $
\xi _0=\left( 0,1/2,0,\dots,0 \right) 
$, obtaining
\begin{equation*}
 \begin{aligned}
    |m_{11-22}(\xi_0) |
    &=  \frac{c_d}{\Gamma \left( \frac{d+2}{2} \right)}|\int_0^{\infty}{t^{\frac{d}{2}}}M\left( t,\xi_0 \right) dt| \\
    &\ge \frac{c_d}{\Gamma \left( \frac{d+2}{2} \right)}\left( \left| \int_1^{\infty}{t^{\frac{d}{2}}M\left( t,\xi_0 \right) dt} \right|-\left| \int_0^1{t^{\frac{d}{2}}M\left( t,\xi_0 \right)}dt \right| \right)  \\
    & := J_{large}-J_{small}.
\end{aligned}
\end{equation*}

For $t>1$, we have
\begin{equation*}
\begin{aligned}
   S_2\left( t,\frac{1}{2} \right) &=\sqrt{\frac{\pi}{t}}\sum_{k\in \mathbb{Z}}{e^{-\frac{\pi ^2\left( k+\frac{1}{2} \right) ^2}{t}}}>0 ,\\
    V_2\left( t,\frac{1}{2} \right) &=2\sum_{m=1}^{\infty}{\left( -1 \right) ^m}m^2e^{-tm^2}\le 2\left( -e^{-t}+4e^{-4t} \right) ,
\end{aligned}
\end{equation*}
and
\begin{equation*}
 \begin{aligned}
    M\left( t,\xi _0 \right)
    &\ge  \left( 2e^{-t}\left( 1-2e^{-t} \right) +\left( 2e^{-t}-8e^{-4t} \right) \left( 1+2e^{-t} \right) \right) \left( 1+2e^{-t} \right) ^{d-2} \\
    &= \left( 4e^{-t}-8e^{-4t}-16e^{-5t} \right) \left( 1+2e^{-t} \right) ^{d-2}.  \\
\end{aligned}
\end{equation*}
Consequently, $
J_{large}\ge \frac{c_d}{\Gamma \left( \frac{d}{2}+1 \right)}\int_1^{\infty}{t^{\frac{d}{2}}}\left( 4e^{-t}-8e^{-4t}-16e^{-5t} \right) \left( 1+2\left( d-2 \right) e^{-t} \right) dt.
$ The same large-time expansion as in Section~2 shows that the leading term on the right-hand side is $
4c_d+4\frac{\left( d-2 \right) c_d}{2^{\frac{d}{2}}}.
$

The small-time contribution $J_{\mathrm{small}}$ is again lower order by the estimates already established in Section~2.

Combining the two regimes gives
$$
\|R_{\mathrm{dis}}^{(11-22)}\|_{\ell^2\to\ell^2}=\lVert m_{11-22} \rVert _{L^{\infty}}\ge 4c_d\left( 1+\frac{\left( 1+o\left( 1 \right) \right) d}{2^{\frac{d}{2}}} \right) ,
$$
which completes the proof of Theorem~\ref{thm:diagonal-difference}.

\section{The $\ell^{1,\infty}$ and $\ell^p$ estimates for $jj-kk$}
In this section we prove Theorem~\ref{thm:diagonal-difference-lp}. Since most of the continuous--discrete comparison is parallel to Section~3, we emphasize only the points where the traceless kernel produces a different dimension-dependent main term. By symmetry, we take $j=1$ and $k=2$ and write
\[
T_{\mathrm{dis}}:=R_{\mathrm{dis}}^{(11-22)},
\qquad
K_{11-22}(m)=c_d\frac{m_1^2-m_2^2}{|m|^{d+2}}.
\]
Let
\[
K_{11-22}^*(x)=c_d\frac{x_1^2-x_2^2}{|x|^{d+2}}\chi_{\{|x|\geq1\}},
\]
and, for $z\in\mathbb Z^d$,
\[
\widetilde K_{11-22}^*(z)
:=\int_Q\int_Q\bigl(K_{11-22}^*(z+s-t)-K_{11-22}^*(z)\bigr)\,dt\,ds,
\qquad Q=\left[-\frac12,\frac12\right)^d.
\]
The main new ingredient is the following lattice-sum estimate.

\begin{lemma}\label{lem:jjkk-lattice-sum}
There exists an absolute constant $C>0$ such that
\[
\sum_{\substack{z\in\mathbb Z^d\\1\leq|z|\leq d^2}}
\frac{|z_1^2-z_2^2|}{|z|^{d+2}}
\leq 4+C\frac{d}{2^{d/2}}.
\]
Moreover, as $d\to\infty$,
\[
\sum_{\substack{z\in\mathbb Z^d\\1\leq|z|\leq d^2}}
\frac{|z_1^2-z_2^2|}{|z|^{d+2}}
=4+(4+o(1))\frac{d}{2^{d/2}}.
\]
\end{lemma}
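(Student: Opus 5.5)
Write $C_d$ for the sum in Lemma~\ref{lem:jjkk-lattice-sum}. The plan mirrors the proof of Lemma~\ref{lem:lattice-sum}: the lower bound comes from the first two lattice shells, and the upper bound from the Gaussian representation \eqref{eq:gamma-representation} followed by a small-time/large-time split.

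\emph{Lower bound.} We keep only the shells $|z|=1$ and $|z|=\sqrt2$.

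On the shell $|z|=1$, the points $\pm e_1$ and $\pm e_2$ each contribute $1$, so this shell gives $4$.

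On the shell $|z|=\sqrt2$, the relevant points are of the form $\pm e_1\pm e_r$ or $\pm e_2\pm e_r$ with $r\ge3$; each has $|z_1^2-z_2^2|=1$. There are $8(d-2)$ such points, and each contributes $2^{-(d+2)/2}$. The total is $8(d-2)/(2\cdot 2^{d/2})=4(d-2)/2^{d/2}$. The points $\pm e_1\pm e_2$ contribute $0$.

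Hence $C_d\ge 4+4(d-2)2^{-d/2}$, which matches the claimed leading asymptotics.

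\emph{Upper bound, reduction.} We use $|z_1^2-z_2^2|\le z_1^2+z_2^2$, together with symmetry between the first two coordinates. Dropping the constraint $|z|\le d^2$ except in the $z_1$-factor, we get
\[
C_d\le \frac{2}{\Gamma(\frac{d+2}{2})}\int_0^\infty t^{d/2}\,E_d(t)\,\Theta(t)^{d-1}\,dt,\qquad E_d(t)=\sum_{0<|k|\le d^2}k^2e^{-tk^2}.
\]
The truncation matters only for small $t$.

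\emph{Upper bound, large times $t\ge1$.} We use the bounds from Section~5:
\[
E_d\le 2e^{-t}+8e^{-4t}+28e^{-9t},\qquad \Theta\le 1+2e^{-t}+3e^{-4t}.
\]
Expanding $(1+x)^{d-1}\le 1+(d-1)x+\frac{d^2}{2}(1+x)^{d-3}x^2$ as in Section~2:

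1. The main term $2e^{-t}$ gives $\frac{4}{\Gamma(\frac{d+2}{2})}\int_1^\infty t^{d/2}e^{-t}\,dt\le 4$.
2. The cross term $2e^{-t}\cdot 2(d-1)e^{-t}$ gives at most $4(d-1)2^{-d/2}$.
3. The remaining terms behave like $d^2e^{-3t}(1+3e^{-t})^{d-3}$ and $e^{-4t}(\cdots)^{d-1}$. Splitting them at $t=2\ln d$ exactly as in the bounds \eqref{R1}--\eqref{R2} gives $O(d^2 3^{-d/2})+O\big((2\ln d)^{d/2+1}(1+3/e)^d/\Gamma(\frac{d+2}{2})\big)$, which is $o(d2^{-d/2})$.

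\emph{Upper bound, small times $0<t\le1$.} The truncation gives $E_d(t)\le 2\sum_{k\le d^2}k^2\le d^6+\dots$. We also have $E_d(t)\lesssim t^{-3/2}$, and by Poisson summation $\Theta(t)\le 4\sqrt{\pi/t}\,(1+e^{-\pi^2/t})$. Repeating the argument for $J_{\mathrm{small}}$ in Lemma~\ref{lem:lattice-sum} gives a bound of order $(4\sqrt\pi)^{d}\mathrm{poly}(d)/\Gamma(\frac{d+2}{2})$, which is negligible.

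\emph{Main obstacle.} The first expansion term is not at all negligible: it equals $4\,\Gamma(\frac{d+2}{2},1)/\Gamma(\frac{d+2}{2})$. This is just below $4$, so it is harmless for the upper bound. What needs care is the bookkeeping: the constant $4$ must come out exactly, with no slack, because every additional error must be $o(d2^{-d/2})$.

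The delicate piece is the term $8e^{-4t}$ in $E_d$ multiplied by $\Theta^{d-1}$. Its leading part is $8\int t^{d/2}e^{-4t}/\Gamma=8\cdot 4^{-(d+2)/2}$, which is fine. But the factor $(1+3e^{-t})^{d-1}$ can grow like $e^{3d/e}$ near $t=1$. I would control it by the same $2\ln d$ split, using the superexponential decay of $1/\Gamma(\frac{d+2}{2})$ against $(2\ln d)^{d/2}(1+3/e)^d$. The uniform version of the bound, with an absolute constant $C$, then follows by absorbing the finitely many small dimensions.
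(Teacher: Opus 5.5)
Your proposal is correct and follows essentially the paper's argument. The lower bound from the shells $|z|^2=1,2$ is identical, and the upper bound likewise starts from $|z_1^2-z_2^2|\le z_1^2+z_2^2$ and runs the Gamma-integral/theta-function analysis, with Poisson summation for small $t$ and the binomial expansion for large $t$. The one difference is in the reduction step. The paper uses full coordinate symmetry to reduce to $\frac{2}{d}\sum_{1\le|z|\le d^2}|z|^{-d}$ and expands $\Theta(t)^d-1$ against $t^{d/2-1}$, which is why it needs the extra cutoff at $t_0=d^{-4}$. You instead keep the weight $z_1^2$ and work with $E_d(t)\Theta(t)^{d-1}$ as in Section~2. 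Both routes arrive at the same bound $4+4(d-1)2^{-d/2}+o(d\,2^{-d/2})$.
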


\begin{proof}
Set
\[
A_d=\sum_{\substack{z\in\mathbb Z^d\\1\leq|z|\leq d^2}}
\frac{|z_1^2-z_2^2|}{|z|^{d+2}}.
\]
The first two lattice shells give the lower bound. The four points $\pm e_1,\pm e_2$ contribute $4$. On $|z|^2=2$, the only nonzero contributions are
\[
\pm e_1\pm e_r,\qquad \pm e_2\pm e_r,\qquad 3\leq r\leq d,
\]
so there are $8(d-2)$ such points. Hence
\begin{equation}\label{eq:jjkk-Ad-lower}
A_d\geq4+\frac{4(d-2)}{2^{d/2}}.
\end{equation}

For the upper bound,
\[
|z_1^2-z_2^2|\leq z_1^2+z_2^2,
\]
and coordinate symmetry gives
\begin{equation}\label{eq:jjkk-Ad-radial}
A_d
\leq\frac2d\sum_{\substack{z\in\mathbb Z^d\\1\leq|z|\leq d^2}}\frac1{|z|^d}
=: \frac2d S_d.
\end{equation}
We estimate $S_d$ by the same Gamma-integral method as before. Put $a=d/2$ and
\[
F_d(t)=\sum_{\substack{z\in\mathbb Z^d\\1\leq|z|\leq d^2}}e^{-t|z|^2}.
\]
Then
\begin{equation}\label{eq:jjkk-Sd-gamma}
S_d=\frac1{\Gamma(a)}\int_0^\infty t^{a-1}F_d(t)\,dt.
\end{equation}
Let $t_0=d^{-4}$ and $L=2\log d$, and split the integral into the four ranges
\[
(0,t_0),\qquad [t_0,1],\qquad [1,L],\qquad [L,\infty).
\]
For the small-time range, the truncation $|z|\leq d^2$ gives
\[
F_d(t)\leq(2d^2+1)^d\leq(3d^2)^d,
\]
and hence
\[
\frac1{\Gamma(a)}\int_0^{t_0}t^{a-1}F_d(t)\,dt
\leq\frac{3^d}{\Gamma(a+1)}
=o\left(\frac{d^2}{2^{d/2}}\right).
\]
On $[t_0,1]$, Poisson summation for $\Theta(t)=\sum_{n\in\mathbb Z}e^{-tn^2}$ gives the same small-time control as in Lemma~\ref{lem:lattice-sum}; on $[1,L]$, the original theta expansion is used instead. These two contributions satisfy, respectively,
\[
O\left(\frac{C^d\pi^{d/2}\log d}{\Gamma(d/2)}\right)
\quad\text{and}\quad
O\left(\frac{C^d(2\log d)^{d/2}}{\Gamma(d/2+1)}\right),
\]
and are both $o(d^2/2^{d/2})$ by Stirling's formula.

It remains to consider $t\geq L$, where the first two lattice shells become explicit. Since
\[
F_d(t)\leq \Theta(t)^d-1,
\qquad
\Theta(t)=1+2e^{-t}+O(e^{-4t}),
\]
and $de^{-t}=O(d^{-1})$ on this interval, the binomial expansion yields uniformly
\[
\Theta(t)^d-1
=2de^{-t}+2d(d-1)e^{-2t}
+O\left(d^3e^{-3t}+de^{-4t}+d^2e^{-5t}\right).
\]
Using
\[
\frac1{\Gamma(a)}\int_0^\infty t^{a-1}e^{-qt}\,dt=q^{-a}
\]
and enlarging the positive integrals from $[L,\infty)$ to $(0,\infty)$, we obtain
\[
S_d
\leq2d+\frac{2d(d-1)}{2^{d/2}}
+O\left(\frac{d^3}{3^{d/2}}+\frac{d}{4^{d/2}}+\frac{d^2}{5^{d/2}}\right)
+o\left(\frac{d^2}{2^{d/2}}\right).
\]
Thus
\begin{equation}\label{eq:jjkk-Sd-asymptotic}
S_d\leq2d+\frac{2d(d-1)}{2^{d/2}}+o\left(\frac{d^2}{2^{d/2}}\right).
\end{equation}
The reverse inequality with these two leading terms follows by retaining only the shells $|z|^2=1$ and $|z|^2=2$. Combining this with \eqref{eq:jjkk-Ad-radial} and \eqref{eq:jjkk-Ad-lower} gives
\[
A_d=4+(4+o(1))\frac{d}{2^{d/2}}.
\]
The non-asymptotic estimate follows after enlarging the absolute constant to cover the finitely many remaining dimensions.
\end{proof}

We now estimate the error kernel. Since the decomposition is the same as in Lemma~\ref{lem:error-kernel}, we keep the repeated estimates brief.

\begin{lemma}\label{lem:jjkk-error-kernel}
There exists an absolute constant $\eta>0$ such that
\[
\|\widetilde K_{11-22}^*\|_{\ell^1(\mathbb Z^d)}
\leq4c_d+\eta c_d\frac{d}{2^{d/2}}.
\]
Moreover, as $d\to\infty$,
\[
\|\widetilde K_{11-22}^*\|_{\ell^1(\mathbb Z^d)}
\leq4c_d\left[1+(1+o(1))\frac{d}{2^{d/2}}\right].
\]
\end{lemma}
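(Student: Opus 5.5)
We mirror the proof of Lemma~\ref{lem:error-kernel}, splitting $\mathbb Z^d$ into the far region $|z|>d^2$ and the near region $|z|\le d^2$. The two main terms $4c_d$ and $4c_d\,d/2^{d/2}$ will come from the near-region lattice sum. All other contributions will be shown to be $o(c_d d/2^{d/2})$.

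\emph{Far region.} Differentiating $K_{11-22}(x)=c_d(x_1^2-x_2^2)|x|^{-d-2}$ gives $|\nabla K_{11-22}(x)|\le Cc_d d|x|^{-d-1}$, exactly as for $x_1x_2$. For $s,t\in Q$ we have $|s-t|\le\sqrt d$, and for $|z|>d^2$ the segment from $z$ to $z+s-t$ stays in $\{|x|\ge1\}$. The mean value theorem therefore gives $|\widetilde K_{11-22}^*(z)|\le Cc_dd^{3/2}|z|^{-d-1}$. Comparing the unit cubes $z+Q$ with the region $\{|x|>d^2/2\}$, as in \eqref{est far region}, we get
\[
\sum_{|z|>d^2}|\widetilde K_{11-22}^*(z)|\lesssim c_dd^{3/2}\,\frac{2\pi^{d/2}}{\Gamma(d/2)}\,\frac{2}{d^2}.
\]
Since $c_d\pi^{d/2}/\Gamma(d/2)=d/2$, the right-hand side is $O(d^{1/2})$. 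This is $o(c_dd/2^{d/2})$ because $c_d/2^{d/2}$ grows super-exponentially.

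\emph{Near region.} We write $\widetilde K_{11-22}^*(z)=\int\Phi(u)\bigl(K_{11-22}^*(z+u)-K_{11-22}^*(z)\bigr)\,du$ with the tent function $\Phi(u)=\prod_r(1-|u_r|)_+$. The triangle inequality bounds the near sum by $I_d+II_d$. For $I_d$, the partition of unity $\sum_z\Phi(x-z)=1$ and polar coordinates give
\[
I_d\le c_d\log(d^2+\sqrt d)\int_{\mathbb S^{d-1}}|\theta_1^2-\theta_2^2|\,d\sigma\le c_d\log(d^2+\sqrt d)\,\frac{2}{d}\,\frac{2\pi^{d/2}}{\Gamma(d/2)}=O(\log d).
\]
This is negligible. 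For $II_d=c_dA_d$, Lemma~\ref{lem:jjkk-lattice-sum} gives $II_d\le 4c_d+(4+o(1))c_dd/2^{d/2}$.

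Adding the three pieces yields
\[
\|\widetilde K_{11-22}^*\|_{\ell^1}\le 4c_d+(4+o(1))c_d\frac{d}{2^{d/2}}.
\]
This is the claimed bound with margin to spare, since $4c_d\bigl[1+(1+o(1))d/2^{d/2}\bigr]$ has second term $(4+o(1))c_dd/2^{d/2}$. The non-asymptotic form with an absolute $\eta$ follows by enlarging the constant to cover the finitely many small $d$.

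The main obstacle is bookkeeping rather than any new estimate. The point to check carefully is that the far-region and $I_d$ bounds are $O(\operatorname{poly}(d))$ after multiplication by $c_d$ via the identity $c_d|\mathbb S^{d-1}|=d$. Such bounds are then dwarfed by $c_dd/2^{d/2}$, which grows like $(d/(4\pi e))^{d/2}$.
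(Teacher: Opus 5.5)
Your proposal is correct and follows the paper's proof essentially step for step: the far region is handled by the mean value theorem together with $c_d|\mathbb S^{d-1}|=d$, giving $O(\sqrt d)$, and the near region is split into $I_d+II_d$, with $II_d$ controlled by Lemma~\ref{lem:jjkk-lattice-sum}. There are only cosmetic differences: you bound $\int_{\mathbb S^{d-1}}|\theta_1^2-\theta_2^2|\,d\sigma$ by $\frac{2}{d}|\mathbb S^{d-1}|$ instead of computing it exactly as $\frac{4}{\pi d}|\mathbb S^{d-1}|$, you explicitly check that the segment stays in $\{|x|\ge1\}$, and your final bound $4c_d+(4+o(1))c_d\,d/2^{d/2}$ is exactly the claimed one rather than one ``with margin to spare''; none of these affects validity.
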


\begin{proof}
For $|z|>d^2$,
\[
|\nabla K_{11-22}(x)|\leq c_d(d+4)|x|^{-d-1}.
\]
The mean value theorem and the same lattice-to-integral comparison used in Lemma~\ref{lem:error-kernel} therefore give
\begin{equation}\label{eq:jjkk-far-error}
\sum_{|z|>d^2}|\widetilde K_{11-22}^*(z)|
=O(\sqrt d)
=o\left(c_d\frac{d}{2^{d/2}}\right),
\end{equation}
where we used $c_d|\mathbb S^{d-1}|=d$ in the last step.

For $|z|\leq d^2$, set
\[
\Phi(u)=\prod_{r=1}^d(1-|u_r|)_+.
\]
As in Section~3,
\[
\sum_{|z|\leq d^2}|\widetilde K_{11-22}^*(z)|\leq I_d+II_d,
\]
where
\[
I_d=\sum_{|z|\leq d^2}\int_{\mathbb R^d}\Phi(u)|K_{11-22}^*(z+u)|\,du,
\qquad
II_d=\sum_{|z|\leq d^2}|K_{11-22}^*(z)|.
\]
Using $\sum_{z\in\mathbb Z^d}\Phi(x-z)=1$ exactly as in the off-diagonal proof,
\[
I_d
\leq c_d\log(d^2+\sqrt d)
\int_{\mathbb S^{d-1}}|\theta_1^2-\theta_2^2|\,d\sigma(\theta).
\]
Rotational symmetry in the $(\theta_1,\theta_2)$-plane gives
\[
\int_{\mathbb S^{d-1}}|\theta_1^2-\theta_2^2|\,d\sigma(\theta)
=\frac{4}{\pi d}|\mathbb S^{d-1}|,
\]
so
\begin{equation}\label{eq:jjkk-Id}
I_d\leq\frac4\pi\log(d^2+\sqrt d)
=o\left(c_d\frac{d}{2^{d/2}}\right).
\end{equation}
On the other hand, Lemma~\ref{lem:jjkk-lattice-sum} gives
\begin{equation}\label{eq:jjkk-IId}
II_d\leq4c_d\left[1+(1+o(1))\frac{d}{2^{d/2}}\right].
\end{equation}
Combining \eqref{eq:jjkk-far-error}, \eqref{eq:jjkk-Id}, and \eqref{eq:jjkk-IId} proves the asymptotic estimate. Enlarging the constant for finitely many small dimensions gives the first assertion.
\end{proof}

\subsection{The $\ell^p$ estimate}
Define
\[
R_1^{(11-22)}F(x)=\int_{\mathbb R^d}K_{11-22}^*(x-y)F(y)\,dy.
\]
Since $P(x)=x_1^2-x_2^2$ is a homogeneous harmonic polynomial of degree two, the higher-order truncation factorization used in Section~3 applies verbatim. By \cite{KKW},
\[
R_1^{(11-22)}F=b_{2,d}*R^{(11-22)}F,
\qquad
b_{2,d}=|B(0,1)|^{-1}\chi_{B(0,1)}.
\]
Dilation and the convergence of the truncated transforms then give
\begin{equation}\label{eq:jjkk-truncated-norm}
\|R_1^{(11-22)}\|_{L^p\to L^p}
=\|R^{(11)}-R^{(22)}\|_{L^p\to L^p}
=p^*-1,
\end{equation}
where the last equality is the sharp continuous estimate; see \cite{BK2}.

By the comparison argument of Proposition~6.1 in \cite{BKK},
\[
\|R_{\mathrm{dis}}^{(11-22)}\|_{\ell^p\to\ell^p}
\leq\|R_1^{(11-22)}\|_{L^p\to L^p}
+\|\widetilde K_{11-22}^*\|_{\ell^1}.
\]
Thus Lemma~\ref{lem:jjkk-error-kernel} yields
\begin{equation}\label{eq:jjkk-lp-upper}
\|R_{\mathrm{dis}}^{(11-22)}\|_{\ell^p\to\ell^p}
\leq(p^*-1)+4c_d\left[1+(1+o(1))\frac{d}{2^{d/2}}\right].
\end{equation}
The non-asymptotic upper estimate follows from the first part of the same lemma.

For the lower bound, $K_{11-22}$ is real-valued and even, hence $R_{\mathrm{dis}}^{(11-22)}$ is self-adjoint. Duality and Riesz--Thorin imply
\[
\|R_{\mathrm{dis}}^{(11-22)}\|_{\ell^2\to\ell^2}
\leq\|R_{\mathrm{dis}}^{(11-22)}\|_{\ell^p\to\ell^p}.
\]
Using Theorem~\ref{thm:diagonal-difference},
\begin{equation}\label{eq:jjkk-lp-lower}
\|R_{\mathrm{dis}}^{(11-22)}\|_{\ell^p\to\ell^p}
\geq4c_d\left(1+\frac{d-2}{2^{d/2}}-\frac{\rho}{4^{d/2}}\right).
\end{equation}
For fixed $p$, $p^*-1=o(c_dd/2^{d/2})$, and \eqref{eq:jjkk-lp-upper}--\eqref{eq:jjkk-lp-lower} give
\[
\|R_{\mathrm{dis}}^{(11-22)}\|_{\ell^p\to\ell^p}
=4c_d\left[1+(1+o(1))\frac{d}{2^{d/2}}\right].
\]

\subsection{The weak-type $(1,1)$ estimate}
For the lower bound, take $f=\delta_0$. At $\pm e_1$ and $\pm e_2$,
\[
|R_{\mathrm{dis}}^{(11-22)}f(n)|=c_d.
\]
Thus, with $\lambda=(1-\varepsilon)c_d$ and $\varepsilon \rightarrow 0$,
\begin{equation}\label{eq:jjkk-weak-lower}
\|R_{\mathrm{dis}}^{(11-22)}\|_{\ell^1\to\ell^{1,\infty}}\geq4c_d.
\end{equation}

For the upper bound, the proof of Lemma~\ref{lem:weak-transference} applies verbatim to $R_{\mathrm{dis}}^{(11-22)}$. For $f\in\ell^1(\mathbb Z^d)$, let
\[
F(x)=\sum_{n\in\mathbb Z^d}f(n)\mathbf1_Q(x-n).
\]
Writing
\[
\widetilde R_{\mathrm{dis}}^{(11-22)}F=R_1^{(11-22)}F+E,
\]
the calculation from Section~3 gives
\begin{equation}\label{eq:jjkk-error-E}
\|E\|_{L^1}\leq\|\widetilde K_{11-22}^*\|_{\ell^1}\|f\|_{\ell^1}.
\end{equation}
The standard weak-type estimate for maximal truncations of Calder\'on--Zygmund operators, applied to $K_{11-22}$, gives the crude dimensional bound
\begin{equation}\label{eq:jjkk-continuous-weak}
\|R_1^{(11-22)}F\|_{L^{1,\infty}}\leq C_0^d d^2\|F\|_{L^1}
\end{equation}
for an absolute constant $C_0>1$; see the classical Calder\'on--Zygmund argument in \cite{CZ}. Indeed, the usual covering proof has at most exponential dependence on $d$, while the normalized kernel satisfies $c_d|B(0,1)|=1$ and $|\nabla K_{11-22}(x)|\lesssim d c_d|x|^{-d-1}$. Thus the rough bound above is more than sufficient for the asymptotic below.

Put $\varepsilon_d=3^{-d/2}$. Since
\[
\{|E+R_1^{(11-22)}F|>\lambda\}
\subset
\left\{|E|>\frac{\lambda}{1+\varepsilon_d}\right\}
\cup
\left\{|R_1^{(11-22)}F|>\frac{\varepsilon_d\lambda}{1+\varepsilon_d}\right\},
\]
Chebyshev's inequality and \eqref{eq:jjkk-error-E}--\eqref{eq:jjkk-continuous-weak} imply
\[
\begin{aligned}
\|R_{\mathrm{dis}}^{(11-22)}f\|_{\ell^{1,\infty}}
&\leq(1+\varepsilon_d)\|\widetilde K_{11-22}^*\|_{\ell^1}\|f\|_{\ell^1}\\
&\quad+\frac{1+\varepsilon_d}{\varepsilon_d}C_0^d d^2\|f\|_{\ell^1}.
\end{aligned}
\]
By Stirling's formula,
\[
(1+3^{d/2})C_0^d d^2=o\left(c_d\frac{d}{2^{d/2}}\right),
\]
and Lemma~\ref{lem:jjkk-error-kernel} therefore gives
\begin{equation}\label{eq:jjkk-weak-upper}
\|R_{\mathrm{dis}}^{(11-22)}\|_{\ell^1\to\ell^{1,\infty}}
\leq4c_d\left[1+(1+o(1))\frac{d}{2^{d/2}}\right].
\end{equation}
The non-asymptotic upper bound follows after enlarging an absolute constant to cover the finitely many small dimensions. Together with \eqref{eq:jjkk-weak-lower}, this completes the proof of Theorem~\ref{thm:diagonal-difference-lp}.

\section{Appendix}

In this section, we collect several Fourier-analytic identities used in
Sections~2, 5, and~6. Throughout the appendix, we identify
$\mathbb T^d$ with $[0,1]^d$ and use the Fourier transform convention
\[
\widehat{F}(\xi)
=
\int_{\mathbb R^d}F(x)e^{-2\pi i x\cdot\xi}\,dx.
\]

For $n\in\mathbb Z^d\setminus\{0\}$, we write
\[
K_{12}(n)
=
c_d\frac{n_1n_2}{|n|^{d+2}}
\]
and
\[
K_{11-22}(n)
=
c_d\frac{n_1^2-n_2^2}{|n|^{d+2}}.
\]

\begin{lemma}\label{lem:multiplier}
The Fourier multiplier of the continuous-discrete operator
$\widetilde{R}_{\mathrm{dis}}^{(12)}$ is
\begin{equation}\label{eq:appendix-m12}
m_{12}(\xi)
=
\mathrm{p.v.}
\sum_{n\in\mathbb Z^d\setminus\{0\}}
K_{12}(n)e^{-2\pi i n\cdot\xi}
=
\mathrm{p.v.}
\sum_{n\in\mathbb Z^d\setminus\{0\}}
c_d\frac{n_1n_2}{|n|^{d+2}}
e^{-2\pi i n\cdot\xi}.
\end{equation}
Similarly, the Fourier multiplier associated with
$R_{\mathrm{dis}}^{(11-22)}
=R_{\mathrm{dis}}^{(11)}-R_{\mathrm{dis}}^{(22)}$
is
\begin{equation}\label{eq:appendix-m1122}
m_{11-22}(\xi)
=
\mathrm{p.v.}
\sum_{n\in\mathbb Z^d\setminus\{0\}}
K_{11-22}(n)e^{-2\pi i n\cdot\xi}
\end{equation}
that is,
\[
m_{11-22}(\xi)
=
\mathrm{p.v.}
\sum_{n\in\mathbb Z^d\setminus\{0\}}
c_d\frac{n_1^2-n_2^2}{|n|^{d+2}}
e^{-2\pi i n\cdot\xi}.
\]
Here the principal values are understood as the $L^2(\mathbb T^d)$
limits of the corresponding radially truncated Fourier series.
\end{lemma}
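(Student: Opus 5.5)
We first compute, for a fixed $\xi$, the Fourier transform of $\widetilde R_{\mathrm{dis}}^{(12)}F$ and identify it with $m_{12}(\xi)\widehat F(\xi)$. The difficulty is that $K_{12}\notin\ell^1(\mathbb Z^d)$, so the plan is to truncate radially, prove the identity for the truncations, and pass to the limit in $L^2$. For $N\ge1$ put
\[
K^{N}_{12}(n)=K_{12}(n)\mathbf 1_{\{1\le|n|\le N\}},\qquad m^N_{12}(\xi)=\sum_{1\le|n|\le N}K_{12}(n)e^{-2\pi i n\cdot\xi}.
\]
For $F\in L^2(\mathbb R^d)$ the truncated operator is $\widetilde R^{N}F=\sum_{1\le|n|\le N}K_{12}(n)F(\cdot-n)$. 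This is a finite combination of translates, so $\widehat{\widetilde R^N F}(\xi)=m^N_{12}(\xi)\widehat F(\xi)$ exactly, where $m_{12}^N$ is a trigonometric polynomial, $\mathbb Z^d$-periodic in $\xi$.

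Next we show that $m^N_{12}$ converges in $L^2(\mathbb T^d)$. By Parseval on $\mathbb T^d$, $m^N_{12}$ is Cauchy in $L^2(\mathbb T^d)$ iff $\sum_n|K_{12}(n)|^2<\infty$. This holds, since $|K_{12}(n)|^2\le c_d^2|n|^{-2d}$ is summable. Hence $m^N_{12}\to m_{12}$ in $L^2(\mathbb T^d)$, and this limit is exactly the principal value in \eqref{eq:appendix-m12}. We then need two facts:
\begin{itemize}
\item[(i)] $m_{12}\in L^\infty(\mathbb T^d)$;
\item[(ii)] $\widetilde R^N F\to\widetilde R_{\mathrm{dis}}^{(12)}F$ in a sense strong enough to identify Fourier transforms.
\end{itemize}

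The main obstacle is (i) together with a uniform bound $\sup_N\|m^N_{12}\|_\infty<\infty$. Once we have that, dominated convergence along an a.e.\ convergent subsequence gives $m^N_{12}\widehat F\to m_{12}\widehat F$ in $L^2(\mathbb R^d)$, by periodic extension and Plancherel. Then $\widetilde R^N F$ converges in $L^2$, and its limit is by definition $\widetilde R_{\mathrm{dis}}^{(12)}F$; this also shows that the operator is well defined on $L^2$.

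For the uniform bound we would use the Gaussian representation \eqref{eq:gamma-representation} applied to the truncated sum. Alternatively, and more robustly, we would compare with the continuous truncated kernel: write $m_{12}^N$ as the Fourier transform of $K^*_{12}\mathbf 1_{\{|x|\le N\}}$, periodized, plus an error with $\ell^1$-bounded kernel. The error is controlled by the mean-value bound $|\nabla K_{12}(x)|\lesssim c_d d|x|^{-d-1}$. The continuous truncated multipliers of the kernel $x_1x_2|x|^{-d-2}$ are uniformly bounded, by the standard Calder\'on--Zygmund estimate for kernels with mean zero on spheres. In practice the upper-bound computation of Section~2 already shows that the integral $\int_0^\infty t^{d/2}|S(t,\xi)|\,dt$ is finite uniformly in $\xi$, and we would verify that the same estimates hold uniformly for the truncated sums. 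This is where care is needed, because truncation destroys the product structure of $S$.

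Finally, the statement for $m_{11-22}$ follows in exactly the same way. The kernel $K_{11-22}$ is again in $\ell^2$ and even, with the same gradient bound, so only the notation changes.
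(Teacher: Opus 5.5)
\textbf{There is a gap: the step your argument rests on is left unproved.} You reduce the lemma to the uniform bound $\sup_N\|m^N_{12}\|_{L^\infty(\mathbb T^d)}<\infty$, and that bound is never established.

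The Gaussian route is not available for the sharp cutoff $\mathbf 1_{\{|n|\le N\}}$, as you note yourself, because the cutoff destroys the factorization of $S(t,\xi)$.

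The comparison route is only sketched, and the version you state needs repair. A plain periodization of $\widehat{K^*_{12}\mathbf 1_{\{|x|\le N\}}}$ is not absolutely convergent: the sharp cutoffs at $|x|=1$ and $|x|=N$ leave only $|\eta|^{-(d+1)/2}$ decay. You would have to sample the cube-averaged kernel $(K^*_{12}\mathbf 1_{\{|x|\le N\}})*\Phi$, as in Section~3. Its Fourier series is the convex combination $\sum_k\widehat\Phi(\xi+k)\,\widehat{K^*_{12}\mathbf 1_{\{|x|\le N\}}}(\xi+k)$, since $\widehat\Phi\ge0$ and $\sum_k\widehat\Phi(\cdot+k)=1$. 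Even then you would still need two further facts: uniform-in-$N$ bounds for the continuous truncated multipliers, and an $\ell^1$ bound on the discrepancy that includes the outer shell $|n|\approx N$. None of this is carried out.

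Your item (i), $m_{12}\in L^\infty$, is also not part of this lemma. It is the content of the upper bound in Section~2.

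\textbf{The bound is not needed.} The lemma only identifies the multiplier. The paper does this on Schwartz functions: it writes $\widetilde R_{\mathrm{dis}}^{(12)}F=\mu_{12}*F$ with $\mu_{12}=\sum_{n\neq0}K_{12}(n)\delta_n$ and reads off $\widehat{\widetilde R_{\mathrm{dis}}^{(12)}F}=m_{12}\widehat F$. The $L^2$ operator norm is then $\|m_{12}\|_{L^\infty}$ by the multiplier criterion.

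Your own truncation scheme also closes without any $L^\infty$ input. For $F\in\mathcal S(\mathbb R^d)$, the majorant $G(x)=\sum_{n\neq0}|K_{12}(n)||F(x-n)|$ satisfies $G(x)\lesssim_{F,d}(1+|x|)^{-d}$; split the sum into $|n|\ge|x|/2$ and $|n|<|x|/2$. Hence $G\in L^2(\mathbb R^d)$, and dominated convergence gives $\widetilde R^NF\to\widetilde R_{\mathrm{dis}}^{(12)}F$ in $L^2(\mathbb R^d)$. On the Fourier side,
\[
\|(m^N_{12}-m_{12})\widehat F\|_{L^2(\mathbb R^d)}^2\le\Bigl(\sum_{k\in\mathbb Z^d}\sup_{k+[0,1]^d}|\widehat F|^2\Bigr)\|m^N_{12}-m_{12}\|_{L^2(\mathbb T^d)}^2\longrightarrow0 .
\]
Plancherel then yields $\widehat{\widetilde R_{\mathrm{dis}}^{(12)}F}=m_{12}\widehat F$, and the same argument works verbatim for $K_{11-22}$.

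If you still want uniformly bounded approximants, replace the sharp cutoff by Gaussian damping $K_{12}(n)e^{-\varepsilon|n|^2}$. This preserves the product structure, and the resulting multiplier is bounded by $\frac{c_d}{\Gamma((d+2)/2)}\int_0^\infty t^{d/2}|S(t,\xi)|\,dt$. The Section~2 estimates control this quantity uniformly in $\xi$.
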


\begin{proof}
We give the details for $K_{12}$; the difference kernel is identical. Since
\[
|K_{12}(n)|\le c_d|n|^{-d},
\qquad
\sum_{n\ne0}|K_{12}(n)|^2<\infty,
\]
the radially truncated Fourier series
\[
m_{12,N}(\xi)=\sum_{0<|n|\le N}K_{12}(n)e^{-2\pi i n\cdot\xi}
\]
forms a Cauchy sequence in $L^2(\mathbb T^d)$, because Parseval gives
\[
\|m_{12,N}-m_{12,M}\|_2^2
=\sum_{M<|n|\le N}|K_{12}(n)|^2\longrightarrow0.
\]
Its limit is therefore the principal-value series in \eqref{eq:appendix-m12}. If
$\mu_{12}=\sum_{n\ne0}K_{12}(n)\delta_n$, then
$\widetilde R_{\mathrm{dis}}^{(12)}F=\mu_{12}*F$ for $F\in\mathcal S(\mathbb R^d)$, and hence
\[
\widehat{\widetilde R_{\mathrm{dis}}^{(12)}F}(\xi)
=m_{12}(\xi)\widehat F(\xi).
\]
For $K_{11-22}$ we use
$|K_{11-22}(n)|\le c_d|n|^{-d}$ and repeat the same argument, obtaining
\eqref{eq:appendix-m1122}.
\end{proof}

\begin{lemma}\label{lem:gaussian-representation}
Let
\[
q_{12}(m)=m_1m_2,
\qquad
q_{11-22}(m)=m_1^2-m_2^2,
\]
and let $q$ denote either one of these two quadratic forms. Then
\begin{align}
&\sum_{m\in\mathbb Z^d\setminus\{0\}}
q(m)\cos(2\pi m\cdot\xi)
\int_0^\infty
t^{d/2}e^{-t|m|^2}\,dt
\nonumber\\
&\qquad =
\lim_{\varepsilon\to0}
\sum_{m\in\mathbb Z^d\setminus\{0\}}
q(m)\cos(2\pi m\cdot\xi)
\int_\varepsilon^\infty
t^{d/2}e^{-t|m|^2}\,dt
\label{eq:appendix-L2-limit}
\end{align}
in $L^2(\mathbb T^d)$. Moreover,
\begin{align}
&\sum_{m\in\mathbb Z^d\setminus\{0\}}
q(m)\cos(2\pi m\cdot\xi)
\int_0^\infty
t^{d/2}e^{-t|m|^2}\,dt
\nonumber\\
&\qquad =
\lim_{\varepsilon\to0}
\int_\varepsilon^\infty
t^{d/2}
\sum_{m\in\mathbb Z^d\setminus\{0\}}
q(m)e^{-t|m|^2}
\cos(2\pi m\cdot\xi)\,dt
\label{eq:appendix-integral-interchange}
\end{align}
in $L^2(\mathbb T^d)$.

In particular, if
\[
S_1(t,x)
=
\sum_{n\in\mathbb Z}
n e^{-tn^2}\sin(2\pi nx)
\]
and
\[
S_2(t,x)
=
\sum_{n\in\mathbb Z}
e^{-tn^2}\cos(2\pi nx),
\]
then
\begin{equation}\label{eq:appendix-m12-integral}
m_{12}(\xi)
=
-\frac{c_d}{\Gamma\left(\frac{d+2}{2}\right)}
\int_0^\infty
t^{d/2}
S_1(t,\xi_1)S_1(t,\xi_2)
\prod_{j=3}^d S_2(t,\xi_j)\,dt.
\end{equation}

Furthermore, setting
\[
V_2(t,x)
=
\sum_{n\in\mathbb Z}
n^2e^{-tn^2}\cos(2\pi nx),
\]
we have
\begin{align}
m_{11-22}(\xi)
&=
\frac{c_d}{\Gamma\left(\frac{d+2}{2}\right)}
\int_0^\infty
t^{d/2}
\Big(
V_2(t,\xi_1)S_2(t,\xi_2)
-
V_2(t,\xi_2)S_2(t,\xi_1)
\Big)
\prod_{j=3}^dS_2(t,\xi_j)\,dt.
\label{eq:appendix-m1122-integral}
\end{align}
All the identities above are understood in the $L^2(\mathbb T^d)$
sense.
\end{lemma}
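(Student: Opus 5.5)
We prove the $L^2(\mathbb T^d)$ identities for the truncated integrals $\int_\varepsilon^\infty$, where everything converges absolutely, and then let $\varepsilon\to0$ using the Parseval structure of Lemma~\ref{lem:multiplier}. Both $q=q_{12}$ and $q=q_{11-22}$ satisfy $|q(m)|\le|m|^2$.

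\emph{Step 1: the coefficient sum \eqref{eq:appendix-L2-limit}.} For fixed $m\neq0$ we have $\int_0^\infty t^{d/2}e^{-t|m|^2}\,dt=\Gamma(\tfrac{d+2}{2})|m|^{-d-2}$. Hence the left side of \eqref{eq:appendix-L2-limit} is $\Gamma(\tfrac{d+2}{2})c_d^{-1}$ times the cosine series of $K(m)$. By evenness of $K$ and Lemma~\ref{lem:multiplier}, this series converges in $L^2$ because $\sum|K(m)|^2<\infty$. Put
\[
a_\varepsilon(m)=q(m)\int_\varepsilon^\infty t^{d/2}e^{-t|m|^2}\,dt .
\]
Then $|a_\varepsilon(m)|\le|q(m)|\,\Gamma(\tfrac{d+2}{2})|m|^{-d-2}\le C|m|^{-d}$ uniformly in $\varepsilon$, and $a_\varepsilon(m)\to a_0(m)$ pointwise. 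Dominated convergence in $\ell^2(\mathbb Z^d)$ gives $a_\varepsilon\to a_0$ in $\ell^2$, and Parseval transfers this to $L^2(\mathbb T^d)$ convergence of the cosine series. Each $a_\varepsilon(m)$ with $\varepsilon>0$ decays like $e^{-\varepsilon|m|^2}$, so for $\varepsilon>0$ the series converges absolutely and uniformly in $\xi$.

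\emph{Step 2: the interchange \eqref{eq:appendix-integral-interchange}.} For fixed $\varepsilon>0$ we need
\[
\sum_m q(m)\cos(2\pi m\cdot\xi)\int_\varepsilon^\infty t^{d/2}e^{-t|m|^2}\,dt=\int_\varepsilon^\infty t^{d/2}\sum_m q(m)e^{-t|m|^2}\cos(2\pi m\cdot\xi)\,dt .
\]
This follows from Fubini--Tonelli once we check
\[
\int_\varepsilon^\infty t^{d/2}\sum_{m\ne0}|m|^2e^{-t|m|^2}\,dt<\infty .
\]
On $[\varepsilon,\infty)$ the inner sum is at most $\sum_{m\neq 0}|m|^2e^{-\varepsilon|m|^2/2}\,e^{-t/2}$, because $|m|^2\ge1$ gives $e^{-t|m|^2}\le e^{-\varepsilon|m|^2/2}e^{-t/2}$ for $t\ge\varepsilon$. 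The factor $e^{-t/2}$ makes the $t$-integral finite. So the identity holds pointwise, and \eqref{eq:appendix-integral-interchange} follows from Step~1.

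\emph{Step 3: factorization.} For each $t>0$, the $d$-fold sum $\sum_m q(m)e^{-t|m|^2}\cos(2\pi m\cdot\xi)$ converges absolutely. We expand
\[
\cos(2\pi m\cdot\xi)=\operatorname{Re}\prod_r e^{2\pi i m_r\xi_r}
\]
and sum coordinatewise. Only the sine parts in $\xi_1,\xi_2$ survive against the odd factors $m_1,m_2$, and $i^2=-1$ supplies the sign, giving $-S_1(t,\xi_1)S_1(t,\xi_2)\prod_{j\ge3}S_2(t,\xi_j)$. For $q_{11-22}$ the factors $m_1^2,m_2^2$ are even, so only cosine parts survive, giving $(V_2S_2-V_2S_2)\prod S_2$. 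Summing over the $m=0$ term is harmless in both cases, since $q(0)=0$.

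\emph{Step 4: the improper integral and the main obstacle.} Finally we identify $\lim_{\varepsilon\to0}\int_\varepsilon^\infty$ with $\int_0^\infty$, reading the latter as an improper integral with $L^2$ limit. This limit exists by Steps~1 and~2, which yields \eqref{eq:appendix-m12-integral} and \eqref{eq:appendix-m1122-integral}. The main obstacle is the behaviour near $t=0$: the product of theta functions blows up like $t^{-d/2-1}$, so the integral need not converge absolutely pointwise near $\xi\in\mathbb Z^d$. That is exactly why we route the $\varepsilon\to0$ limit through the $\ell^2$ dominated convergence of Step~1 rather than through pointwise estimates. Away from lattice points, the Poisson bounds \eqref{est of S2}--\eqref{est of S1} show the integral also converges absolutely, consistent with the $L^2$ limit.
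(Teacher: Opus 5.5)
Your argument is correct and is essentially the paper's proof. You use $\ell^2$ dominated convergence of the coefficients $q(m)\int_\varepsilon^\infty t^{d/2}e^{-t|m|^2}\,dt$ (bounded by $\Gamma(\frac{d+2}{2})|m|^{-d}$) plus Parseval for the $\varepsilon\to0$ limit, Fubini for fixed $\varepsilon>0$, and parity to factor the Gaussian sums; your explicit Tonelli majorant and real-part-of-a-product computation only fill in details the paper leaves implicit. One small quibble concerns your closing heuristic in Step 4, which plays no role in the proof: the $t$-integrand vanishes identically at $\xi\in\mathbb Z^d$ (since $S_1(t,0)=0$, respectively by the $1\leftrightarrow2$ symmetry), and it is absolutely integrable at every other point.
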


\begin{proof}
For $\varepsilon>0$ put
\[
b_m^\varepsilon=q(m)\int_\varepsilon^\infty t^{d/2}e^{-t|m|^2}\,dt,
\qquad
b_m=q(m)\int_0^\infty t^{d/2}e^{-t|m|^2}\,dt.
\]
The change of variables $u=t|m|^2$ gives
\[
b_m=\Gamma\left(\frac{d+2}{2}\right)\frac{q(m)}{|m|^{d+2}},
\qquad |b_m|\le \Gamma\left(\frac{d+2}{2}\right)|m|^{-d}.
\]
Thus $(b_m)_{m\ne0}\in\ell^2$, and dominated convergence in $\ell^2$, followed by Parseval, yields
\[
\sum_{m\ne0}b_m^\varepsilon\cos(2\pi m\cdot\xi)
\longrightarrow
\sum_{m\ne0}b_m\cos(2\pi m\cdot\xi)
\quad\text{in }L^2(\mathbb T^d).
\]
For fixed $\varepsilon>0$ the Gaussian decay makes the sum absolutely integrable, so Fubini's theorem permits the interchange of the sum and the integral over $(\varepsilon,\infty)$. Letting $\varepsilon\downarrow0$ proves \eqref{eq:appendix-L2-limit} and \eqref{eq:appendix-integral-interchange}.

It remains only to factor the Gaussian sums. For $q(m)=m_1m_2$, parity gives
\[
\sum_{m\ne0}m_1m_2e^{-t|m|^2}\cos(2\pi m\cdot\xi)
=-S_1(t,\xi_1)S_1(t,\xi_2)\prod_{j=3}^dS_2(t,\xi_j),
\]
which yields \eqref{eq:appendix-m12-integral}. Likewise,
\[
\sum_{m\ne0}(m_1^2-m_2^2)e^{-t|m|^2}\cos(2\pi m\cdot\xi)
=\bigl(V_2(t,\xi_1)S_2(t,\xi_2)-V_2(t,\xi_2)S_2(t,\xi_1)\bigr)
\prod_{j=3}^dS_2(t,\xi_j),
\]
and \eqref{eq:appendix-m1122-integral} follows.
\end{proof}

\begin{lemma}\label{lem:poisson-sums}
Let $t>0$ and $x\in[0,1]$. Define
\begin{align}
S_1(t,x)
&=
\sum_{n\in\mathbb Z}
n e^{-tn^2}\sin(2\pi nx),
\label{eq:S1_def}
\\
S_2(t,x)
&=
\sum_{n\in\mathbb Z}
e^{-tn^2}\cos(2\pi nx),
\label{eq:S2_def}
\\
V_2(t,x)
&=
\sum_{n\in\mathbb Z}
n^2e^{-tn^2}\cos(2\pi nx).
\label{eq:V2_def}
\end{align}
Then the Poisson summation formula gives
\begin{align}
S_1(t,x)
&=
\frac{\pi^{3/2}}{t^{3/2}}
\sum_{k\in\mathbb Z}
(x-k)
e^{-\frac{\pi^2(x-k)^2}{t}},
\label{eq:S1_dual}
\\
S_2(t,x)
&=
\sqrt{\frac{\pi}{t}}
\sum_{k\in\mathbb Z}
e^{-\frac{\pi^2(x-k)^2}{t}},
\label{eq:S2_dual}
\\
V_2(t,x)
&=
\frac{\sqrt{\pi}}{2t^{3/2}}
\sum_{k\in\mathbb Z}
\left(
1-\frac{2\pi^2(x-k)^2}{t}
\right)
e^{-\frac{\pi^2(x-k)^2}{t}}.
\label{eq:V2_dual}
\end{align}
\end{lemma}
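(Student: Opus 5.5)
The three identities are one-dimensional Poisson summation applied to the Gaussian and its first two derivatives in an auxiliary variable. The plan is to prove \eqref{eq:S2_dual} first and obtain the other two by differentiating in $x$ or in $t$.

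\textbf{The identity for $S_2$.} Take $g(y)=e^{-ty^2}$. Its Fourier transform is
\[
\widehat g(\eta)=\sqrt{\pi/t}\,e^{-\pi^2\eta^2/t}.
\]
Apply Poisson summation $\sum_n h(n)=\sum_k\widehat h(k)$ to the modulated function $h(y)=g(y)e^{2\pi i yx}$. Since $\widehat h(k)=\widehat g(k-x)$, this gives
\[
\sum_{n}e^{-tn^2}e^{2\pi i nx}=\sqrt{\pi/t}\sum_k e^{-\pi^2(x-k)^2/t}.
\]
The left side is real by the symmetry $n\mapsto -n$, and equals the cosine series $S_2(t,x)$. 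Poisson summation is legitimate because $g$ is Schwartz, and both series converge absolutely and uniformly in $x$.

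\textbf{The identity for $S_1$.} Differentiate \eqref{eq:S2_dual} in $x$. The left side gives
\[
\partial_x S_2=-2\pi\sum_n n e^{-tn^2}\sin(2\pi nx)=-2\pi S_1.
\]
The right side gives
\[
\sqrt{\pi/t}\sum_k\Bigl(-\tfrac{2\pi^2(x-k)}{t}\Bigr)e^{-\pi^2(x-k)^2/t}.
\]
Dividing by $-2\pi$ yields
\[
S_1=\frac{\pi^{3/2}}{t^{3/2}}\sum_k(x-k)e^{-\pi^2(x-k)^2/t},
\]
which is \eqref{eq:S1_dual}. Termwise differentiation is justified because both differentiated series converge uniformly on compact sets, thanks to the Gaussian factors.

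\textbf{The identity for $V_2$.} Differentiate \eqref{eq:S2_dual} in $t$ rather than twice in $x$, since this lands directly on the stated normalization. The left side gives $\partial_t S_2=-V_2$. On the right, write $u=x-k$ and compute
\[
\partial_t\Bigl(\sqrt{\pi}\,t^{-1/2}e^{-\pi^2u^2/t}\Bigr)=\sqrt{\pi}\,e^{-\pi^2u^2/t}\Bigl(-\tfrac12 t^{-3/2}+\pi^2u^2t^{-5/2}\Bigr).
\]
Negating this gives
\[
V_2=\frac{\sqrt\pi}{2t^{3/2}}\sum_k\Bigl(1-\frac{2\pi^2(x-k)^2}{t}\Bigr)e^{-\pi^2(x-k)^2/t},
\]
which is exactly \eqref{eq:V2_dual}.

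\textbf{Main obstacle.} There is no real obstacle. The only points requiring care are the Fourier convention (the $2\pi$ in the exponent), the sign bookkeeping, and justifying the termwise differentiation. The latter follows from local uniform convergence of the derivative series in $t>0$ and $x\in[0,1]$.
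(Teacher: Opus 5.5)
Your proposal is correct and follows the paper's proof essentially step for step. You prove the $S_2$ identity by Poisson summation for the Gaussian, obtain the $S_1$ identity by differentiating in $x$, and obtain the $V_2$ identity by differentiating in $t$, with the same sign bookkeeping and the same justification of termwise differentiation.
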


\begin{proof}
We first prove \eqref{eq:S2_dual}. Since the summand is even,
\[
S_2(t,x)
=
\sum_{n\in\mathbb Z}
e^{-tn^2}e^{2\pi inx}.
\]
Let
\[
f(y)=e^{-ty^2}.
\]
With our Fourier transform convention,
\[
\widehat f(\xi)
=
\int_{\mathbb R}
e^{-ty^2}e^{-2\pi i\xi y}\,dy
=
\sqrt{\frac{\pi}{t}}
e^{-\frac{\pi^2\xi^2}{t}}.
\]
The Poisson summation formula
\[
\sum_{n\in\mathbb Z}
f(n)e^{2\pi inx}
=
\sum_{k\in\mathbb Z}
\widehat f(k-x)
\]
therefore yields
\[
S_2(t,x)
=
\sqrt{\frac{\pi}{t}}
\sum_{k\in\mathbb Z}
e^{-\frac{\pi^2(k-x)^2}{t}},
\]
which is \eqref{eq:S2_dual}.

Since the series and all of its derivatives converge absolutely and
locally uniformly for $t>0$, we may differentiate term by term.
From \eqref{eq:S2_def},
\[
\frac{\partial}{\partial x}S_2(t,x)
=
-2\pi
\sum_{n\in\mathbb Z}
n e^{-tn^2}\sin(2\pi nx)
=
-2\pi S_1(t,x).
\]
On the other hand, differentiating \eqref{eq:S2_dual}, we obtain
\[
\frac{\partial}{\partial x}S_2(t,x)
=
-\frac{2\pi^{5/2}}{t^{3/2}}
\sum_{k\in\mathbb Z}
(x-k)e^{-\frac{\pi^2(x-k)^2}{t}}.
\]
Consequently,
\[
S_1(t,x)
=
\frac{\pi^{3/2}}{t^{3/2}}
\sum_{k\in\mathbb Z}
(x-k)e^{-\frac{\pi^2(x-k)^2}{t}},
\]
which proves \eqref{eq:S1_dual}.

Finally, differentiating \eqref{eq:S2_def} with respect to $t$ gives
\[
-\frac{\partial}{\partial t}S_2(t,x)
=
\sum_{n\in\mathbb Z}
n^2e^{-tn^2}\cos(2\pi nx)
=
V_2(t,x).
\]
Differentiating the right-hand side of
\eqref{eq:S2_dual} with respect to $t$, we find
\begin{align*}
-\frac{\partial}{\partial t}S_2(t,x)
&=
\frac{\sqrt{\pi}}{2t^{3/2}}
\sum_{k\in\mathbb Z}
\left(
1-\frac{2\pi^2(x-k)^2}{t}
\right)
e^{-\frac{\pi^2(x-k)^2}{t}}.
\end{align*}
Thus \eqref{eq:V2_dual} follows and the proof is complete.
\end{proof}

\bibliographystyle{amsalpha}

\end{document}